\theoremstyle{plain}
\newtheorem{proposition}{Proposition}[section]
\newtheorem{theorem}[proposition]{Theorem}
\newtheorem*{theorem*}{Theorem}
\newtheorem*{thm_4.3}{Theorem 4.3}
\newtheorem*{thm_3.1}{Theorem 3.1}
\newtheorem*{thm_5.3}{Theorem 5.3}
\newtheorem*{theoremA}{Theorem A}
\newtheorem*{theoremB}{Theorem B}
\newtheorem*{theoremC}{Theorem C}
\newtheorem{theoremalpha}{Theorem}
\newtheorem{corollary}[proposition]{Corollary}
\newtheorem{lemma}[proposition]{Lemma}
\newtheorem{question}[proposition]{Question}
\theoremstyle{definition}
\newtheorem{definition}[proposition]{Definition}
\newtheorem{conjecture}[proposition]{Conjecture}
\newtheorem{remark}[proposition]{Remark}
\newtheorem{example}[proposition]{Example}
\DeclareMathOperator{\spn}{span}
\DeclareMathOperator{\lk}{lk}
\DeclareMathOperator{\im}{im}
\DeclareMathOperator{\Id}{Id}
\DeclareMathOperator{\id}{Id}
\DeclareMathOperator{\ab}{ab}
\DeclareMathOperator{\pr}{pr}
\DeclareMathOperator{\lcm}{lcm}
\DeclareMathOperator{\diag}{diag}
\DeclareMathOperator{\fix}{fix}
\newcommand{\bp}{\begin{pmatrix}}
\newcommand{\ep}{\end{pmatrix}}
\newcommand{\be}{\begin{equation}}
\newcommand{\ee}{\end{equation}}
\newcommand{\ol}[1]{\overline{#1}}
\newcommand{\smfrac}[2]{\mbox{\footnotesize$\displaystyle\frac{#1}{#2}$}} 
\newcommand{\tmfrac}[2]{\mbox{\large$\frac{#1}{#2}$}} 
\def\Z{\mathbb Z}
\def\Q{\mathbb Q}
\def\wt#1{\widetilde{#1}}
\def\sm{\smallsetminus}
\def\a{\alpha}
\def\AK{\mathcal{A}^{\Q}(K)}
\def\Bl{\mathcal{B}\ell^{\Q}}
\def\toiso{\xrightarrow{\simeq}}
\def\bp{\begin{pmatrix}}
\def\ep{\end{pmatrix}}
\def\ba{\begin{array}}
\def\ea{\end{array}}
\def\bn{\begin{enumerate}}
\def\en{\end{enumerate}}
\DeclareMathOperator\Arf{Arf}
\DeclareMathOperator{\Inn}{Inn}
\begin{document}
\title[A ribbon obstruction and derivatives of knots]{A ribbon obstruction and derivatives of knots}

\author{JungHwan Park}
\address{Max-Planck-Institut f\"{u}r Mathematik}
\email{jp35@mpim-bonn.mpg.de }
\urladdr{http://people.mpim-bonn.mpg.de/jp35}

\author{Mark Powell}
\address{Department of Mathematical Science, Durham University, United Kingdom}
\email{mark.a.powell@durham.ac.uk}

\def\subjclassname{\textup{2010} Mathematics Subject Classification}
\expandafter\let\csname subjclassname@1991\endcsname=\subjclassname
\expandafter\let\csname subjclassname@2000\endcsname=\subjclassname
\subjclass{%
 57M25, 
 57M27, 
 57N70, 
}
\keywords{Slice, doubly slice, homotopy ribbon, Milnor's invariants, derivatives of knots}

\begin{abstract}
We define an obstruction for a knot to be $\Z[\Z]$-homology ribbon, and use this to provide restrictions on the integers that can occur as the triple linking numbers of derivative links of knots that are either homotopy ribbon or doubly slice.  Our main application finds new non-doubly slice knots.  In particular this gives new information on the doubly solvable filtration of Taehee Kim: doubly algebraically slice ribbon knots need not be doubly $(1)$-solvable, and doubly algebraically slice knots need not be $(0.5,1)$-solvable.  We also discuss potential connections to unsolved conjectures in knot concordance, such as generalised versions of Kauffman's conjecture.  Moreover it is possible that our obstruction could fail to vanish on a slice knot.
\end{abstract}

\maketitle

\section{Introduction}\label{Introduction}

Consider a set of curves on a Seifert surface for an algebraically slice knot in $S^3$ that represent a basis for a metaboliser of the Seifert form, namely a half--rank summand of the homology of the surface on which the form vanishes.  Such a set of curves on a Seifert surface, considered as a link in $S^3$ in its own right, is called a \emph{derivative} of $K$.  Derivatives are highly non-unique.  Note that if a knot has a slice derivative link, then the knot is itself slice, since the slicing discs can be used to surger the Seifert surface in the 4-ball to a disc.  One is led to consider the converse.  In other words, we would like to understand, when a knot $K$ has a non-slice derivative link, the situations in which we can deduce that $K$ is not slice.

In the literature there are several higher order signature obstructions, which use a non-vanishing signature of a derivative link to deduce that the original knot is not slice, for example \cite{Cooper82, Gi83, GL92b, Gil93, COT04, CHL10, GL13, Burke14}.  It is an interesting question to determine the extent to which other concordance invariants of links can be applied in this manner.

Towards this end, in this article we study Milnor's triple linking numbers of derivative links.  For an oriented link $L$, the triple linking numbers $\ol{\mu}_L(ijk)$ were one of the first known link invariants that need not vanish on links with unknotted components and vanishing linking numbers. For links with vanishing linking numbers, they are integers.   We provide restrictions on the integers that can arise as the triple linking numbers $\ol{\mu}_L(ijk)$ of derivative links if the base knot $K$ is a homotopy ribbon or a doubly slice knot.

In this paper, knots and links will always come with a choice of orientation.  Recall that a knot $K$ is \emph{slice} if it is the boundary of some  locally flat embedded disc in the four ball $D^4$, \emph{homotopy ribbon} if there exists a slicing disc for which the fundamental group of the knot exterior surjects onto the fundamental group of the slice disc exterior, and $K$ is \emph{doubly slice} if it occurs as an equatorial cross section $K = S \cap S^3$ of an unknotted locally flat 2-sphere $S \subset S^4$ embedded in the 4-sphere, and so slices in two different ways.

\subsection{Doubly slice knots}

Theorem~\ref{theorem:doubly-slice-theorem} below constructs new families of algebraically doubly slice but not doubly slice knots.  They are detected by virtue of nonzero Milnor triple linking numbers of derivatives links.  The new properties of our knots can be expressed in terms of Taehee Kim's doubly-solvable filtration \cite{Kim06}, of the set of knots up to concordance $\mathcal{C}$, by submonoids $\{\mathcal{F}_{n,m}\}$, where $n,m \in \tmfrac{1}{2}\mathbb{N}_0$.  This filtration generalises the solvable filtration of~\cite{COT03}.  Roughly speaking, for those familiar with the solvable filtration, a knot $K$ is $(n,m)$-solvable if the zero-framed surgery $M_K$ bounds an $n$-solution $W_n$ and an $m$-solution $W_m$ such that the union $W_n \cup_{M_K} W_m$ has fundamental group $\Z$.  Sometimes we say doubly $(n)$ solvable instead of $(n,n)$-solvable.

The pertinent facts are the following.
\begin{enumerate}[(i)]
{\setlength\itemindent{15pt}\item Doubly slice knots are $(n,m)$-solvable for all $n,m \in \tmfrac{1}{2}\mathbb{N}_0$~\cite{Kim06}.}
{\setlength\itemindent{15pt}\item  Algebraically slice is equivalent to $(0.5)$-solvable \cite{COT03}, which is itself equivalent to doubly $(0.5)$-solvable, since any algebraically slice knot admits a $(0.5)$-solution with fundamental group $\Z$.}
{\setlength\itemindent{15pt}\item A knot is algebraically doubly slice if a Seifert form admits two dual metabolisers.  Every doubly $(1)$-solvable knot is algebraically doubly slice.  Of course every doubly algebraically slice knot is algebraically slice (Proposition~\ref{prop:kim06}).}
{\setlength\itemindent{15pt}\item Every homotopy ribbon knot is $(n,0.5)$-solvable for all $n$ (Lemma~\ref{lemma:0.5ribbon}).}
\end{enumerate}

\noindent Here is our first main result.

\begin{theoremalpha}\label{theorem:doubly-slice-theorem}~
\begin{enumerate}[(a)]
{\setlength\itemindent{15pt}\item\label{item-theorem-A-1} There exists a ribbon knot that is algebraically doubly slice, but not doubly $(1)$-solvable.}
{\setlength\itemindent{15pt}\item\label{item-theorem-A-2} There exists a knot that is algebraically doubly slice, but not $(0.5,1)$-solvable.}
\end{enumerate}
In particular, neither knot is doubly slice.
\end{theoremalpha}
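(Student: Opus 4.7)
The plan is to reduce both parts to the paper's main obstruction theorem (the $\Z[\Z]$-homology ribbon obstruction referenced in the abstract), which restricts the Milnor triple linking numbers $\ol{\mu}_L(ijk)$ of derivative links of homotopy ribbon or suitably doubly solvable knots to an explicit subset of $\Z$ determined by the Seifert form of $K$. Granting this, Theorem~\ref{theorem:doubly-slice-theorem} follows from constructing explicit knots whose Seifert form is algebraically doubly slice and which admit a derivative link whose triple linking numbers fall outside the allowed subset.

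The construction I envisage is infection-based. I would start with a standard Seifert surface $F_0$ for an unknot (or trivial connected sum) realising an algebraically doubly slice form of genus at least two, and fix a basis of curves on $F_0$ representing a preferred metaboliser. Infect along this basis using pattern knots chosen so that the resulting derivative link $L$ is a Borromean-type link whose triple linking number $\ol{\mu}_L(123)$ is a prescribed nonzero integer, chosen large enough to lie outside the allowed subset. For part (a), I would arrange the infection so that every infection curve bounds an embedded disc in the complement of $F_0$, guaranteeing that the resulting knot $K$ is honestly ribbon; by fact (iv) it is then $(n,0.5)$-solvable for all $n$ via Lemma~\ref{lemma:0.5ribbon}, and if it were also doubly $(1)$-solvable the obstruction would force $\ol{\mu}_L(123)$ into the allowed subset, a contradiction. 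For part (b), one loses the ribbon structure but in exchange only needs to rule out $(0.5,1)$-solvability; an otherwise identical construction, with the infection curves no longer required to bound ribbon discs, produces a knot which is algebraically doubly slice by inspection of its (unchanged) Seifert form but whose derivative has a triple linking number violating the $(0.5,1)$-solvability form of the obstruction.

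The principal obstacle is the interplay between the ribbon constraint and the non-vanishing triple linking number in part (a): derivatives of ribbon knots can routinely be arranged to be unlinks, so one must carefully pick a derivative associated with a metaboliser \emph{different} from the one cut out by the ribbon discs, and then verify that no other choice of Seifert surface and metaboliser can give a derivative whose triple linking number lies inside the allowed set. This is precisely where the paper's $\Z[\Z]$-homology ribbon formulation does the heavy lifting, since it shows that the allowed subset depends only on algebraic data of the Seifert form, not on the choice of Seifert surface or derivative. Once this conceptual point is in hand, computing $\ol{\mu}_L(123)$ of the infection link is a direct Magnus-expansion calculation from the infection data, and checking that the target value lies outside the allowed subset reduces to a linear-algebraic computation on the fixed algebraically doubly slice Seifert form.
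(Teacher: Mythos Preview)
Your overall strategy matches the paper's: construct knots by string-link infection with Borromean rings on a genus three Seifert surface with block Seifert form $\begin{pmatrix} 0 & X \\ X-\Id & 0 \end{pmatrix}$, then invoke the $\Z[\Z]$-homology ribbon obstruction $\psi(K,P)$. Two points, however, are not just imprecise but are genuine gaps.

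First, your statement of the obstruction for part~(a) is the wrong one. You write that if $K$ were doubly $(1)$-solvable ``the obstruction would force $\ol{\mu}_L(123)$ into the allowed subset, a contradiction.'' But the ribbon metaboliser you deliberately leave alone already has $\psi(K,P_0)=0$, so exhibiting \emph{one} other derivative $L$ with bad $\ol{\mu}_L(123)$ contradicts nothing. The correct form of the obstruction (Theorem~\ref{theorem:doubly-slice-obstruction}) is that doubly $(1)$-solvable forces \emph{two complementary} lagrangians $P_1\oplus P_2=\AK$ with $\psi(K,P_1)=\psi(K,P_2)=0$. The paper's argument is therefore: show that \emph{exactly one} lagrangian has $\psi=0$, so no complementary pair exists. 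You gesture at this in your last paragraph, but never state the complementarity condition, and your earlier sentence is inconsistent with it.

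Second, you say one must ``verify that no other choice of Seifert surface and metaboliser'' gives a good derivative, and that the obstruction's Seifert-surface-independence handles this. That is half the story. What remains is enumerating the \emph{lagrangians} $P\subset\AK$ themselves. The paper does this by choosing $X=\diag(p_1,p_2,p_3)$ so that $M^{-1}M^T$ has six distinct eigenvalues; an elementary argument (Proposition~\ref{proposition:possiblemetaboliser}) then shows there are exactly eight metabolisers, one for each choice from $\{J_i,\delta_i\}$. One ties Borromean rings into seven of the eight triples (all but $\delta_1\cup\delta_2\cup\delta_3$) for part~(a), and into all eight for part~(b). Without this finiteness, ``check all other metabolisers'' is not a finite task, and your proposal gives no mechanism for it.
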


This has the consequence that algebraically doubly slice does not correspond precisely to any step in the doubly solvable filtration.
To prove Theorem~\ref{theorem:doubly-slice-theorem}, we construct knots with derivatives having nonvanishing triple linking numbers, and we show that these triple linking numbers cannot occur for a doubly slice knot, nor indeed for a doubly $(1)$-solvable knot.

\subsection{A homology ribbon obstruction}

To state our obstruction theorem we introduce the following notion.

\begin{definition}\label{defn:homology-ribbon}
  A knot $K$ is said to be \emph{$\Z[\Z]$-homology ribbon} if there is a slice disc $D \subset D^4$ for $K$ such that the induced map $H_1(S^3 \sm \nu K ;\Z[\Z]) \to H_1(D^4 \sm \nu D;\Z[\Z])$ is surjective.
\end{definition}

A derivative link of a given knot depends on a choice of Seifert surface, a choice of metaboliser, and a choice of curves representing that metaboliser.  In order to obtain an obstruction that is independent of choices,  we take the fundamental class $[M_K] \in H_3(M_K;\Z)$ of the zero-framed surgery manifold $M_K$, and map it to the group homology $H_3(B\Gamma(K,P);\Z)$, where $\Gamma(K,P)$ is a group that depends on the knot $K$ and a lagrangian $P$ for the rational Blanchfield form of $K$. The map $M_K \to B\Gamma(K,P)$ arises from a representation $\a_P \colon \pi_1(M_K) \to \Gamma(K,P)$ that also depends on $P$.  The image of $[M_K]$ in $H_3(B\Gamma(K,P);\Z)$ is our obstruction $\psi(K,P)$.

\begin{theorem}\label{thm:obstruction-intro}
 Suppose that a knot $K$ is $\Z[\Z]$-homology ribbon.  Then there is a lagrangian $P$ for the rational Blanchfield form such that $\psi(K,P) = 0 \in H_3(B\Gamma(K,P);\Z)$.
\end{theorem}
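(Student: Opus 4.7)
The plan is to use the slice-disc exterior $X_D := D^4 \sm \nu D$ as a null-cobordism witnessing $\psi(K,P) = 0$ over $B\Gamma(K,P)$. Fix a slice disc $D$ realising the $\Z[\Z]$-homology ribbon condition and let $j \colon M_K \hookrightarrow X_D$ denote the inclusion.

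First I would construct the lagrangian $P$ directly from $j$. The hypothesised surjection $j_* \colon H_1(M_K;\Z[\Z]) \twoheadrightarrow H_1(X_D;\Z[\Z])$ has a kernel $P_{\Z}$, and by the standard half-lives-half-dies argument for slice-disc exteriors---combined with Poincar\'e-Lefschetz duality for the pair $(X_D, M_K)$ with $\Z[\Z]$-coefficients---the rationalisation $P := P_{\Z} \otimes \Q$ is a lagrangian for the rational Blanchfield form on $H_1(M_K;\Q[t^{\pm 1}])$.

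Second, I would extend $\alpha_P$ across $X_D$. Since $\Gamma(K,P)$ is built from $\pi_1(M_K)$ using the abelianisation $\pi_1(M_K) \twoheadrightarrow \Z$ (which factors through $\pi_1(X_D)$, as $j$ induces an isomorphism on $H_1(-;\Z)$) together with the quotient $H_1(M_K;\Z[\Z])/P_{\Z}$, and since $P_{\Z} = \ker j_*$ by design, this quotient injects into $H_1(X_D;\Z[\Z])$. One then checks that $\alpha_P$ descends through $j_\# \colon \pi_1(M_K) \to \pi_1(X_D)$, producing an extension $\widetilde{\alpha}_P \colon \pi_1(X_D) \to \Gamma(K,P)$ satisfying $\widetilde{\alpha}_P \circ j_\# = \alpha_P$.

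Third, with $\widetilde{\alpha}_P$ in hand, the induced map $X_D \to B\Gamma(K,P)$ restricts on $M_K = \partial X_D$ to the classifying map of $\alpha_P$. A direct computation with the long exact sequence of the pair $(X_D, M_K)$, using that $j$ induces an isomorphism on integral $H_1$, shows $H_3(X_D;\Z) = 0$; hence $j_*[M_K] = 0 \in H_3(X_D;\Z)$, and applying $(\widetilde{\alpha}_P)_*$ yields $\psi(K,P) = 0$. The principal obstacle is the second step: one must verify from the precise definition of $\Gamma(K,P)$ that $\ker(j_\#)$ lies inside $\ker(\alpha_P)$, and this is where the $\Z[\Z]$-homology ribbon hypothesis---stronger than the mere existence of a slice disc---is exactly what is required.
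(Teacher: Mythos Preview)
Your overall strategy---use the slice-disc exterior $X_D$ as a null-bordism over $B\Gamma(K,P)$---is exactly what the paper does. The gap is in your second step. You propose to obtain $\widetilde{\alpha}_P$ by showing that $\alpha_P$ \emph{descends through} $j_\# \colon \pi_1(M_K) \to \pi_1(X_D)$, and you identify the key obstacle as verifying $\ker(j_\#) \subseteq \ker(\alpha_P)$. But the $\Z[\Z]$-homology ribbon hypothesis does \emph{not} say that $j_\#$ is surjective---that would be the strictly stronger homotopy ribbon condition. So even after checking the kernel condition, you have only defined $\widetilde{\alpha}_P$ on the image of $j_\#$, and there is no mechanism in your outline for extending it to the rest of $\pi_1(X_D)$.

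The paper avoids this by never attempting to factor through $j_\#$. Instead it builds the extension intrinsically from $\pi_1(X_D)$: one passes to the metabelian quotient $\pi_1(X_D) \to \pi_1(X_D)/\pi_1(X_D)^{(2)} \cong \Z \ltimes \mathcal{A}(X_D)$, then further to $\Z \ltimes \mathcal{A}(X_D)/T$ where $T$ is the $\Z$-torsion. The $\Z[\Z]$-homology ribbon hypothesis is used precisely here: surjectivity of $j_*$ on $\Z[\Z]$-homology, together with a short diagram chase comparing integral and rational Alexander modules, shows that the inclusion-induced map $\mathcal{A}(K) \to \mathcal{A}(X_D)/T$ has kernel exactly $\widetilde{P}$, hence induces an \emph{isomorphism} $\mathcal{A}(K)/\widetilde{P} \cong \mathcal{A}(X_D)/T$. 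Composing with the inverse of this isomorphism gives $\widetilde{\alpha}_P \colon \pi_1(X_D) \to \Z \ltimes \mathcal{A}(K)/\widetilde{P} = \Gamma(K,P)$, defined on all of $\pi_1(X_D)$, and one then checks it restricts to $\alpha_P$ on the boundary. Note also that the paper must pass to $\mathcal{A}(X_D)/T$ rather than $\mathcal{A}(X_D)$ itself; your $P_{\Z} := \ker j_*$ need not coincide with $\widetilde{P}$ unless this torsion is handled. Once the extension is correctly constructed, your third step (pushing forward the relative fundamental class of $(X_D,M_K)$) goes through as you describe.
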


One then observes that if $K$ is homotopy ribbon, it is $\Z[\Z]$-homology ribbon.  On the other hand, if $K$ is doubly slice, then it is $\Z[\Z]$-homology ribbon in two ways. Thus our obstruction for a knot to be $\Z[\Z]$-homology ribbon can be applied to obstruct a knot from being doubly slice and homotopy ribbon.  As far as we know, doubly slice knots need not be homotopy ribbon, and there are homotopy ribbon knots that are not doubly slice, so Definition~\ref{defn:homology-ribbon} is necessary to unify the treatment.

\begin{theorem}\label{thm-intro-homotopy-ribbon}
Suppose that a knot $K$ lies in $\mathcal{F}_{0.5,1}$ $($for example if $K$ is homotopy ribbon$)$. Then there is a lagrangian $P$ for the rational Blanchfield form such that $\psi(K,P)=0$.
\end{theorem}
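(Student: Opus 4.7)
The plan is to adapt the proof of Theorem~\ref{thm:obstruction-intro}, replacing the slice disc exterior by a $0.5$-solution and using the $1$-solution to play the role that the $\Z[\Z]$-homology surjectivity plays in Definition~\ref{defn:homology-ribbon}. Since any homotopy ribbon knot is $(n,0.5)$-solvable for every $n$ by Lemma~\ref{lemma:0.5ribbon}, in particular such a knot lies in $\mathcal{F}_{0.5,1}$, so the parenthetical case is subsumed by the general one.

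Given $K \in \mathcal{F}_{0.5,1}$, fix a $0.5$-solution $W_{0.5}$ and a $1$-solution $W_1$ with $\pi_1(W_{0.5} \cup_{M_K} W_1) \cong \Z$, and set
\[
 P := \ker\bigl(H_1(M_K;\Q[t,t^{-1}]) \longrightarrow H_1(W_{0.5};\Q[t,t^{-1}])\bigr).
\]
The standard Poincar\'e--Lefschetz duality argument for $0.5$-solutions (as in~\cite{COT03}) shows that $P$ is a lagrangian for the rational Blanchfield form. With this candidate $P$, the next step is to verify that the representation $\a_P \colon \pi_1(M_K) \to \Gamma(K,P)$ factors through $\pi_1(W_{0.5})$; this is the analogue of the $\Z[\Z]$-homology ribbon condition. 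Once it is established, the remainder of the argument is formal, running exactly as in the proof of Theorem~\ref{thm:obstruction-intro}: the inclusion $M_K = \partial W_{0.5} \hookrightarrow W_{0.5}$ sends $[M_K]$ to a class that bounds in $H_3(W_{0.5};\Z)$, and composing with $W_{0.5} \to B\pi_1(W_{0.5}) \to B\Gamma(K,P)$ yields $\psi(K,P) = 0$ in $H_3(B\Gamma(K,P);\Z)$.

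The main obstacle is precisely this factorization. Because $\Gamma(K,P)$ encodes information beyond the metabelian quotient of $\pi_1(M_K)$, mere $0.5$-solvability of $W_{0.5}$ is not enough to extend $\a_P$ over $\pi_1(W_{0.5})$: one needs control on the second derived quotient as well. This is where the $1$-solution $W_1$ and the hypothesis $\pi_1(W_{0.5} \cup_{M_K} W_1) \cong \Z$ enter. Dually to the way $W_{0.5}$ produces the lagrangian $P$, the $1$-solution $W_1$ produces a complementary submodule on the other side, and the $\pi_1$-condition on the union forces these two pieces of data to match along $M_K$ at the second-derived level. This compatibility is exactly what is needed to certify that the natural map $\pi_1(W_{0.5}) \to \Gamma(K,P)$ is well-defined and makes $\a_P$ factor, completing the proof.
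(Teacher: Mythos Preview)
Your proposal has the roles of $W_{0.5}$ and $W_1$ reversed, and this is a genuine gap rather than a cosmetic difference.

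First, the submodule $P := \ker\bigl(H_1(M_K;\Q[t,t^{-1}]) \to H_1(W_{0.5};\Q[t,t^{-1}])\bigr)$ need not be a lagrangian: the argument from~\cite{COT03} (Lemma~\ref{lemma-kernel-metaboliser-for-Bl-1-solutions} here) requires a $(1)$-solution, not a $(0.5)$-solution. In fact, one may always replace $W_{0.5}$ by a $(0.5)$-solution with $\pi_1(W_{0.5})\cong\Z$, in which case $H_1(W_{0.5};\Q[t,t^{-1}])=0$ and your $P$ is all of $\AK$, which is not a lagrangian unless $\AK=0$.

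Second, the group $\Gamma(K,P)=\Z\ltimes\mathcal{A}(K)/\widetilde{P}$ is a \emph{quotient} of the metabelian quotient $G/G^{(2)}$, not something ``beyond'' it. Consequently, the obstruction to extending $\alpha_P$ over a $4$-manifold $W$ is precisely that the map $\mathcal{A}(K)\to\mathcal{A}(K)/\widetilde{P}$ factor through $H_1(W;\Z[\Z])$. If $\pi_1(W_{0.5})\cong\Z$ then $H_1(W_{0.5};\Z[\Z])=0$, so $\alpha_P$ factors through $\pi_1(W_{0.5})$ only if $\mathcal{A}(K)/\widetilde{P}=0$, i.e.\ only in the degenerate case $P=\AK$ ruled out above. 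Your final paragraph, invoking ``matching at the second-derived level,'' does not address this; there is no second-derived information in $\Gamma(K,P)$.

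The paper's argument runs in the opposite direction. One takes $P$ to be the lagrangian associated to the $(1)$-solution $W_1$, and one extends $\alpha_P$ over $W_1$. The only issue is surjectivity of $\mathcal{A}(K)\to H_1(W_1;\Z[\Z])$, and this is exactly where $W_{0.5}$ enters: with $\pi_1(W_{0.5})\cong\Z$ and $\pi_1(W_{0.5}\cup_{M_K}W_1)\cong\Z$, the Mayer-Vietoris sequence in $\Z[\Z]$-coefficients forces $H_1(M_K;\Z[\Z])\to H_1(W_1;\Z[\Z])$ to be onto (Lemma~\ref{lemma:kim06half}). Thus $W_1$ is a homology ribbon $(1)$-solution, and Theorem~\ref{theorem:obstruction} gives $\psi(K,P)=0$. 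In short: $W_1$ supplies both the lagrangian and the null-bordism; $W_{0.5}$ is used only to certify the $\Z[\Z]$-homology surjectivity into $W_1$.
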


\begin{theorem}\label{thm-intro-doubly-slice}
Suppose that a knot $K$ lies in $\mathcal{F}_{1,1}$ $($for example if $K$ is doubly slice$)$.  Then there are lagrangians $P_1$ and $P_2$ for the rational Blanchfield form such that $P_1 \oplus P_2 = H_1(M_K;\Q[\Z])$ and $\psi(K,P_1) = \psi(K,P_2) = 0$.
\end{theorem}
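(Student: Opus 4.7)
The plan is to unpack the $(1,1)$-solvability hypothesis and reduce the statement to Theorem~\ref{thm-intro-homotopy-ribbon} applied twice. If $K \in \mathcal{F}_{1,1}$, then by definition $M_K$ bounds two $1$-solutions $W_1$ and $W_2$ such that the closed $4$-manifold $N := W_1 \cup_{M_K} W_2$ satisfies $\pi_1(N) = \Z$. For each $i \in \{1,2\}$ set
$$P_i := \ker\!\bigl(H_1(M_K;\Q[\Z]) \to H_1(W_i;\Q[\Z])\bigr).$$
By the Cochran--Orr--Teichner-style argument that enters the proof of Theorem~\ref{thm-intro-homotopy-ribbon}, each $P_i$ is a lagrangian for the rational Blanchfield form of $K$.

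The first step is to verify $P_1 \oplus P_2 = H_1(M_K;\Q[\Z])$. This is precisely the assertion that $K$ is algebraically doubly slice with complementary metabolisers arising from $W_1$ and $W_2$, which is Proposition~\ref{prop:kim06} (following Kim). The underlying mechanism is the Mayer--Vietoris sequence of $N = W_1 \cup_{M_K} W_2$ with $\Q[\Z]$-coefficients: the condition $\pi_1(N) = \Z$ identifies $H_*(N;\Q[\Z])$ with the rational homology of the universal cover of $N$, and together with Poincar\'e duality on $N$ this forces the map $H_1(M_K;\Q[\Z]) \to H_1(W_1;\Q[\Z]) \oplus H_1(W_2;\Q[\Z])$ to be injective. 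Combined with the half-rank property over $\Q(t)$ of each lagrangian, the resulting equality $P_1 + P_2 = H_1(M_K;\Q[\Z])$ upgrades to a direct sum decomposition.

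The second step invokes Theorem~\ref{thm-intro-homotopy-ribbon}. Since every $1$-solution is in particular a $0.5$-solution, the pair $(W_2, W_1)$ witnesses $K \in \mathcal{F}_{0.5,1}$, and Theorem~\ref{thm-intro-homotopy-ribbon} produces a lagrangian $P$ with $\psi(K, P) = 0$. Inspecting its proof, $P$ is constructed as the kernel of $H_1(M_K;\Q[\Z]) \to H_1(W_1;\Q[\Z])$, so $P = P_1$. Swapping the roles of $W_1$ and $W_2$ gives $\psi(K, P_2) = 0$, completing the proof.

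The main obstacle is not the vanishing of $\psi$, which reduces cleanly to the previous theorem on each side, but rather the tracking of lagrangians in Step 2: one must confirm that the lagrangian delivered by the proof of Theorem~\ref{thm-intro-homotopy-ribbon} is precisely the kernel lagrangian associated with the $1$-solution used on the opposite side, not merely some abstractly existing lagrangian. This amounts to checking that the representation $\alpha_{P_i}$ and the extension of $M_K \to B\Gamma(K,P_i)$ over $W_i$ go through for these specific $P_i$, rather than a reshuffled choice.
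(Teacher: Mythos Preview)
Your argument is correct and follows essentially the same route as the paper. The paper packages Step~1 and the surjectivity needed for Step~2 into Lemma~\ref{lemma:kim06} (Kim), which asserts directly that a $(1,1)$-pair $(W_1,W_2)$ yields two \emph{homology ribbon} $(1)$-solutions with $H_1(M_K;\Z[\Z]) \xrightarrow{\cong} H_1(W_1;\Z[\Z]) \oplus H_1(W_2;\Z[\Z])$; it then applies Theorem~\ref{theorem:obstruction} to each $W_i$ to obtain $\psi(K,P_i)=0$.

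Two minor remarks. First, the reference for the complementary-lagrangian statement is Lemma~\ref{lemma:kim06}, not Proposition~\ref{prop:kim06} (the latter is phrased in terms of Seifert forms). Second, your detour through Theorem~\ref{thm-intro-homotopy-ribbon} and then ``inspecting its proof'' is unnecessary: the Mayer--Vietoris sequence with $\pi_1(W_1 \cup_{M_K} W_2)=\Z$ already gives surjectivity of $H_1(M_K;\Z[\Z]) \to H_1(W_i;\Z[\Z])$ for each $i$, so each $W_i$ is a homology ribbon $(1)$-solution and Theorem~\ref{theorem:obstruction} applies directly with the specific lagrangian $P_i$. This sidesteps entirely the concern you raise in your final paragraph about tracking which lagrangian the black-box theorem returns.
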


Versions of these obstructions have been known to the experts for some time; we learnt about them from Tim Cochran, Shelly Harvey, Kent Orr and Peter Teichner.
We had to deal precisely with differences between rational and integral Alexander modules, and the relationship between Blanchfield and Seifert forms, in order to make the obstruction practical.  Another new ingredient now is the work of the first author \cite{Park16}, which gives a procedure to obtain infinitely many different integers as the triple linking numbers $\ol{\mu}(123)$ of derivative links representing a fixed set of homology classes on a Seifert surface.

\subsection{Determining the possible triple linking numbers of derivatives}

Let $K$ be a knot with a genus three Seifert surface $\Sigma$, and let $H \subset H_1(\Sigma;\Z)$ be a metaboliser for the Seifert form of $\Sigma$.
We write $\mathfrak{d} K /\mathfrak{d}H$ for the set of all derivative links on $\Sigma$ whose homology classes span $H$.  We consider a derivative link as an ordered and oriented link.  Since $H$ is a rank three free abelian group, its third exterior power $\bigwedge^3 H \cong \Z$.  Let $\mathfrak{o}(L)$ be the generator $[L_1] \wedge [L_2] \wedge [L_3] \in \bigwedge^3 H$.
 We investigate the set
\[S_{K,H} := \{ \bar{\mu}_{L}(123) - \bar{\mu}_{L'}(123) \mid L,L' \in \mathfrak{d}K/\mathfrak{d}H,\,\mathfrak{o}(L) = \mathfrak{o}(L')\}.\]
Note that this set is for a fixed Seifert surface; a priori it could vary for different Seifert surfaces.
For certain knots and certain Seifert surfaces we are able, in combination with the results of \cite{Park16}, to determine this set precisely.
Here is our second main result.

\begin{theoremalpha}\label{thm:determination-of-mu-123}
  Let $K$ be a knot that admits a genus three Seifert surface $\Sigma$ that has a basis for the first homology $H_1(\Sigma;\Z)$, with respect to which the Seifert form is given by  $\begin{pmatrix}
    A & X \\ X -I & 0
  \end{pmatrix}$,
where $X = \diag(p_1,p_2,p_3)$ is a diagonal matrix and there are no restrictions on~$A$.  Write $n := \det(X) -\det(X-\Id)$.  Suppose that the nonzero entries $p_i$ are such that $\gcd(p_i,n)=\gcd(p_i-1,n)=1$, and $p_i\cdot(p_i-1) \neq 0$ for $i =1,2,3$.
Let $H$ be the metaboliser generated by the last three basis elements of $H_1(\Sigma;\Z)$.
Then $S_{K,H} = n \Z$.
\end{theoremalpha}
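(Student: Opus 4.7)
The plan is to establish the two inclusions $n\Z \subseteq S_{K,H}$ and $S_{K,H} \subseteq n\Z$ separately. It is immediate that $S_{K,H}$ is a subgroup of $\Z$ (closed under addition by concatenating derivative-to-derivative transitions, and under negation by swapping $L$ and $L'$), so the two inclusions together give the claimed equality.

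For the realization $n\Z \subseteq S_{K,H}$, I would invoke the explicit banding construction of the first author in \cite{Park16}. Given any base derivative $L \in \mathfrak{d}K/\mathfrak{d}H$ with $\mathfrak{o}(L)$ the standard generator of $\bigwedge^3 H$, that construction produces a family $\{L^{(k)}\}_{k \in \Z}$ of derivatives on the same Seifert surface $\Sigma$, representing the same homology classes, with $\ol{\mu}_{L^{(k)}}(123) - \ol{\mu}_L(123) = k\cdot n$. The specific coefficient $n = \det(X) - \det(X-\Id)$ arises from a direct Seifert-form computation of the change in triple linking number per banding operation. The gcd hypotheses $\gcd(p_i,n) = \gcd(p_i-1,n) = 1$ and $p_i(p_i-1) \neq 0$ ensure that such bands can be deployed independently on each component of $L$, so that every multiple of $n$ is actually realized and no proper subgroup of $n\Z$ traps the construction.

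For the obstruction $S_{K,H} \subseteq n\Z$, I would take two derivatives $L, L' \in \mathfrak{d}K/\mathfrak{d}H$ with $\mathfrak{o}(L) = \mathfrak{o}(L')$. Since corresponding components $L_i$ and $L'_i$ represent the same class in $H_1(\Sigma;\Z)$, the two links are connected by a sequence of elementary moves on $\Sigma$ (band moves, crossing changes, isotopies through $\Sigma$). For each elementary move I would compute the change in $\ol{\mu}(123)$ using the Seifert form and its block structure, making essential use of the upper-right block $X = \diag(p_1,p_2,p_3)$ and lower-left block $X-\Id$ to control how the modified curve pairs with the other two components. The contributions telescope to an integer combination of $\det(X)$ and $\det(X-\Id)$, so the total change lies in $n\Z$.

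The main obstacle is the obstruction direction, and specifically the identification that the divisibility is exactly by $n$ rather than by some proper multiple. A priori the combinatorics of elementary moves could enforce a stronger divisibility constraint; the gcd conditions on the $p_i$ are designed precisely to rule this out, guaranteeing that the contributions from distinct components do not conspire to produce an additional common factor. I expect the technical core of the argument to be an intersection-theoretic bookkeeping on $\Sigma$, tracking triple linking contributions across simultaneous modifications of all three components and reducing everything to the block structure of the Seifert matrix via a single determinantal identity extracting $n$.
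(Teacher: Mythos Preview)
Your realization direction correctly invokes \cite{Park16}: the inclusion $n\Z \subseteq S_{K,H}$ is Theorem~\ref{theorem:inclusion} and holds with no gcd hypotheses at all. Your claim that the conditions $\gcd(p_i,n)=\gcd(p_i-1,n)=1$ are needed for this direction is backwards; they are used only to pin down the obstruction. (Incidentally, your opening claim that $S_{K,H}$ is ``immediately'' a subgroup is not justified: differences of elements of a fixed set need not be closed under addition, and your ``concatenating transitions'' only works when the middle derivatives match. The paper never needs this; it proves the two inclusions separately.)

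The obstruction direction has a genuine gap. Your premise that ``corresponding components $L_i$ and $L'_i$ represent the same class in $H_1(\Sigma;\Z)$'' is false: the condition $\mathfrak{o}(L)=\mathfrak{o}(L')$ only says the change-of-basis matrix between $\{[L_i]\}$ and $\{[L'_i]\}$ lies in $\SL(3,\Z)$, not that it is the identity. More seriously, there is no catalogue of ``elementary moves on $\Sigma$'' connecting arbitrary derivatives for which the effect on $\bar\mu(123)$ is computable from the Seifert matrix. The triple linking number is an invariant of the link in $S^3$, not of intersection data on $\Sigma$, so ``intersection-theoretic bookkeeping on $\Sigma$'' cannot control it; the bookkeeping you envision would have to take place in the infinite cyclic cover, and carrying that out rigorously essentially forces you to the paper's invariant.

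The paper's route is algebraic-topological rather than combinatorial. It defines $\psi(K,P)\in H_3(B(\Z\ltimes\mathcal{A}(K)/\widetilde{P});\Z)$ directly from $K$ and the lagrangian $P$ represented by $H$, with no reference to a derivative, and then proves (Proposition~\ref{prop:psi-K-equals-H-3-of-derivative}, Theorem~\ref{theorem:milnor-and-obstruction}) via a cobordism from $M_K$ to $M_J$ that $\psi(K,P)$ is the image of $\bar\mu_J(123)\cdot(e_1\times e_2\times e_3)$ under $i_*\circ j_*$ for \emph{every} derivative $J$ associated to $H$. Hence $(\bar\mu_J(123)-\bar\mu_{J'}(123))\cdot j_*(e_1\times e_2\times e_3)$ lies in $\ker i_*=\im(t_*-\id)$ in the Wang sequence for $B(\mathcal{A}(K)/\widetilde{P})\to B(\Z\ltimes\mathcal{A}(K)/\widetilde{P})\to S^1$. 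A direct computation, using $\mathcal{A}(K)/\widetilde{P}\cong\bigoplus_{i=1}^3\Lambda/\langle(p_i-1)-p_it\rangle$ and passing to $\Q$, shows $\im(t_*-\id)\cap\im(j_*)\subseteq\langle\tfrac{n}{m}\cdot j_*(e_1\times e_2\times e_3)\rangle$ with $m$ an $\lcm$ of the numbers $g_{(n,p_i)},g_{(n,p_i-1)}$; your gcd hypotheses are exactly what make $m=1$. The conceptual step you are missing is the existence of a well-defined invariant of $(K,P)$ that every derivative hits, reducing the problem to computing a single subgroup of a group homology rather than tracking moves.
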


The inclusion $n\Z \subseteq S_{K,H}$ was shown by the first author in \cite{Park16}.  In particular \cite[Corollary~$4.5$]{Park16} produced Alexander polynomial one knots having genus 3 Seifert surfaces $\Sigma$, with the property that for every $k \in \Z$ there is a derivative on $\Sigma$ with Milnor triple linking number $k$.  To show the opposite inclusion we employ the obstruction~$\psi$.

As well as understanding the possible Milnor's invariants of derivatives on a fixed Seifert surface, we also exhibit knots for which we can control the Milnor's invariants of all possible derivatives on \emph{all} possible Seifert surfaces.  Recall that a $(0)$-solvable link has all linking numbers and all triple linking numbers vanishing.  We say that a knot is \emph{homotopy ribbon $(1)$-solvable} if there is a $(1)$-solution $W$ with $\pi_1(M_K) \to \pi_1(W)$ surjective. By definition every homotopy ribbon knot is homotopy ribbon $(1)$-solvable.
Here is the third main result of this article.

\begin{theoremalpha}\label{thm:any-seifert-surface-non-trivial-mu-123}
  There exists an algebraically slice knot $K$ that is not homotopy ribbon $(1)$-solvable and moreover does not have any $(0)$-solvable derivative. In particular, for any derivative $J$, there is a subset $\{i,j,k\}$ of the indexing set for the components of $J$ such that $\ol{\mu}_J(ijk) \neq 0$.
\end{theoremalpha}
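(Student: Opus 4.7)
The plan is to reduce Theorem~D to the nonvanishing of the obstruction $\psi$ for all lagrangians of the rational Blanchfield form, and then to verify this on a specific knot built from the setup of Theorem~\ref{thm:determination-of-mu-123} combined with the triple linking number realisation results of~\cite{Park16}. First, I would use a standard reduction: if a knot $K$ has a $(0)$-solvable derivative link $J$ on some Seifert surface $\Sigma$, then pushing $\Sigma$ properly into $D^4$ and performing surgery along a $(0)$-solution for $J$ produces a $(1)$-solution $W$ for $K$. The meridians of the components of $J$ become nullhomotopic in $W$ after the surgery, and together with the meridian of $K$ they normally generate $\pi_1$ of the complement of the pushed-in surface, so the map $\pi_1(M_K) \to \pi_1(W)$ is surjective; that is, $K$ is homotopy ribbon $(1)$-solvable. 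Therefore it suffices to produce an algebraically slice $K$ that is \emph{not} homotopy ribbon $(1)$-solvable.

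If such a $K$ were homotopy ribbon $(1)$-solvable via a $(1)$-solution $W_1$, then combining $W_1$ with a $(0.5)$-solution $W_{0.5}$ of fundamental group $\Z$ (which exists by algebraic sliceness) gives, via van Kampen and the surjectivity of $\pi_1(M_K) \to \pi_1(W_1)$, a pushout with fundamental group $\pi_1(W_1)/\pi_1(W_1)^{(1)} = H_1(W_1) = \Z$. Hence $K \in \mathcal{F}_{0.5,1}$, and Theorem~\ref{thm-intro-homotopy-ribbon} then provides a lagrangian $P$ of the rational Blanchfield form with $\psi(K,P)=0$. Taking contrapositives, the task reduces to exhibiting an algebraically slice $K$ for which $\psi(K,P) \neq 0$ for \emph{every} lagrangian $P$.

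To construct such a $K$, I would take a genus three Seifert matrix of the block form appearing in Theorem~\ref{thm:determination-of-mu-123}, with $n := \det(X)-\det(X-\Id) \geq 2$ and the $\gcd$ hypotheses on the $p_i$ satisfied, and then use \cite[Corollary~4.5]{Park16} to realise a derivative $L$ on the resulting Seifert surface $\Sigma$ with $\ol{\mu}_L(123)=1$. By Theorem~\ref{thm:determination-of-mu-123}, every derivative on $\Sigma$ whose classes span the prescribed metaboliser $H$ has triple linking number congruent to $1$ modulo $n$, and hence nonzero; this handles the lagrangian $P_H$ of the Blanchfield form coming from $(\Sigma,H)$. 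The main obstacle is to rule out the vanishing of $\psi(K,P)$ for lagrangians $P$ other than $P_H$: since $\psi$ is intrinsic to $K$ and does not depend on a choice of Seifert surface, one cannot handle such $P$ directly by further triple linking computations on the fixed $\Sigma$. I would overcome this by choosing the Seifert data so that the rational Alexander module has essentially a unique lagrangian up to the action of isometries of the Blanchfield form, and then appealing to the equivariance of the mod-$n$ detection in the proof of Theorem~\ref{thm:determination-of-mu-123} under this action to conclude that $\psi(K,P) \neq 0$ for every $P$. The first two paragraphs then yield that $K$ is not homotopy ribbon $(1)$-solvable, and therefore has no $(0)$-solvable derivative, proving Theorem~D.
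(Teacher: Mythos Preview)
Your reduction in the first paragraph does not go through.  In a $(0)$-solution $V$ for $J$ the meridians of $J$ are \emph{not} nullhomotopic (they generate $H_1(V)\cong\Z^g$), so after gluing $V$ to the fundamental cobordism $E$ there is no reason for $\pi_1(M_K)\to\pi_1(E\cup_{M_J}V)$ to be onto; that would require $\pi_1(M_J)\to\pi_1(V)$ to be surjective, which is not part of the definition of a $(0)$-solution.  The paper never attempts to prove ``$(0)$-solvable derivative $\Rightarrow$ homotopy ribbon $(1)$-solvable''.  Instead it proves the two assertions separately from the same input $\psi(K,P)\neq 0$ for all $P$: nonvanishing of $\psi$ blocks homotopy ribbon $(1)$-solvability via Lemma~\ref{lemma:homotopyribbonsol} and Theorem~\ref{theorem:obstruction}, and it blocks the existence of a $(0)$-solvable derivative directly via Theorem~\ref{thm:vanishesiftriple}, since a $(0)$-solvable link has all triple linking numbers zero.

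The more serious gap is in your third paragraph.  You correctly identify that the whole proof rests on producing $K$ with $\psi(K,P)\neq 0$ for \emph{every} lagrangian $P$, but your proposed mechanism (``unique lagrangian up to isometries of the Blanchfield form, plus equivariance of the mod~$n$ detection'') is not what the paper does and is not obviously workable.  The paper's construction is quite different and much more explicit: it takes the Seifert matrix $\bp 0 & X\\ X-\Id & 0\ep$ with $X=\diag(p_1,p_2,p_3)$ and $A=0$, uses an eigenvector analysis of $M^{-1}M^T$ (Proposition~\ref{proposition:possiblemetaboliser} and Corollary~\ref{corollary:eightmetabolisers}) to show that there are exactly eight metabolisers, each of the form $\spn\{L_1,L_2,L_3\}$ with $L_i\in\{\delta_i,J_i\}$, and then performs \emph{eight} separate Borromean string-link infections so that every one of these eight explicit derivative links has $\ol{\mu}(123)=1$.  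Corollary~\ref{corollary:TFAE} then gives $\psi(K,P)\neq 0$ for each of the eight corresponding lagrangians, which by Lemma~\ref{lemma:Lagrangian} are all of them.  Your plan to realise $\ol{\mu}_L(123)=1$ for a single metaboliser and then transport this via isometries does not give this; you need to control all eight simultaneously, and the paper's point is that with $A=0$ one can enumerate them and treat each by hand.
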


A knot satisfying Theorem~\ref{thm:any-seifert-surface-non-trivial-mu-123} is constructed by string link infections involving Borromean rings.
Examples of smoothly slice knots that have non-slice derivatives on their unique genus minimising Seifert surface, constructed in \cite{CD14}, suffer from the defect that there exists an unlinked derivative after stabilising. The triple linking numbers in the derivatives of our knots cannot be destroyed by stabilisation.  The big question, of course, is whether any of the knots that we construct for the proof of Theorem~\ref{thm:any-seifert-surface-non-trivial-mu-123} are slice.  More generally, the following question remains open.

\begin{question}
Does every $($smoothly$)$ slice knot have a Seifert surface with a $($smoothly$)$ slice derivative?
\end{question}

It is a standard construction (see for example \cite[Corollary~7.4]{CD14}) that every ribbon knot has a Seifert surface with an unlinked derivative.
If the knots of Theorem~\ref{thm:any-seifert-surface-non-trivial-mu-123} are not slice, it seems likely that they will also not be $(1)$-solvable, and so would show the nontriviality of the quotient $\mathcal{F}_{0.5}/\mathcal{F}_{1}$ of the solvable filtration.
Note that it was recently shown in \cite{DMOP16} that genus one algebraically slice knots are $(1)$-solvable.


\begin{remark}
  We take this opportunity to mention the paper \cite{JKP14}, which purported to show that there are slice boundary links whose derivative links all have nonvanishing triple linking numbers.  Unfortunately, as pointed out by the first author to the second, there is a mistake in the argument given that the links constructed in \cite{JKP14} are slice.  In particular, the 2-complex $Y \times I$ cannot be embedded in the link complement as claimed on pages 16 and 17 of \cite{JKP14}.
\end{remark}

\subsection{Acknowledgements}
The first author would like to thank his advisors Tim Cochran and Shelly Harvey, and also Christopher Davis for helpful discussions.  The authors are grateful to the Max Planck Institute for Mathematics and the Hausdorff Institute for Mathematics in Bonn. Part of this paper was written while the authors were visitors at these institutes.  The authors respectively thank the Universit\'{e} du Qu\'{e}bec \`{a} Montr\'{e}al and Rice University for excellent hospitality.  The second author was supported by an NSERC Discovery Grant.

\section{Background and notation}\label{Background and notation}

\makeatletter
\providecommand*{\twoheadrightarrowfill@}{%
  \arrowfill@\relbar\relbar\twoheadrightarrow
}
\providecommand*{\twoheadleftarrowfill@}{%
  \arrowfill@\twoheadleftarrow\relbar\relbar
}
\providecommand*{\xtwoheadrightarrow}[2][]{%
  \ext@arrow 0579\twoheadrightarrowfill@{#1}{#2}%
}
\providecommand*{\xtwoheadleftarrow}[2][]{%
  \ext@arrow 5097\twoheadleftarrowfill@{#1}{#2}%
}
\makeatother

\subsection{Notation}\label{Notation}
An $m$-component \emph{link} $L = L_1 \cup \dots \cup L_m$ is the image of an embedding of a disjoint union of $m$ circles into the $3$-sphere.  All links in this paper are ordered and oriented.  A \emph{knot} is a 1-component link. We say that an $m$-component link $L$ is \emph{slice} if the components of $L$ bound $m$ locally flat disjointly embedded $2$-discs $D_1 \cup \dots \cup D_m$ in $D^4$ with $\partial D_i = L_i$.  A knot $K$ is called \emph{doubly slice} if it arises as a cross section $S \cap S^3$ of a locally flat unknotted $2$-sphere $S \subset S^4$.

If in addition the slice discs $D_1 \cup \dots \cup D_m$ are smoothly embedded, then we say that the link $L$ is \emph{smoothly slice}.  Similarly, if the 2-sphere $S$ is smoothly embedded then we say that the knot $K$ is \emph{smoothly doubly slice}.

If a smoothly slice knot $K$ bounds a smoothly embedded $2$-disc $D$ in $D^4$ for which there are no local maxima of the radial function restricted to $D$, we call $D$ a \emph{ribbon disc} and we call $K$ a \emph{ribbon knot}. Furthermore, following \cite{CG83}, we say that a knot $K$ is \emph{homotopy ribbon} if there is a slice disc $D$ for which the inclusion induces an epimorphism $\pi_1(S^3 \sm \nu K) \twoheadrightarrow \pi_1(D^4 \sm \nu D)$. Every ribbon knot is homotopy ribbon.

Every knot $K$ in $S^3$ admits a Seifert surface $\Sigma$. Let $g$ be the genus of $\Sigma$.  From $\Sigma$, we can define a Seifert form $\beta_\Sigma \colon H_1(\Sigma ) \times H_1(\Sigma ) \rightarrow \mathbb{Z}$. Levine~\cite[Lemma $2$]{Lev69} showed that if $K$ is a slice knot, then $\beta_\Sigma$ is metabolic for any choice of Seifert surface $\Sigma$ for $K$, that is there exists a direct summand $H \cong \mathbb{Z}^{g}$, of $H_1(\Sigma) \cong \Z^{2g}$, such that $\beta_\Sigma(H \times H)=0$. We call such $H \subset H_1(\Sigma)$ a \emph{metaboliser} of $\beta_{\Sigma}$ and say that a knot $K$ is \emph{algebraically slice} if it has a metaboliser. If $K$ is algebraically slice and $H$ is a metaboliser of $\beta_{\Sigma}$, then a link $J = J_1\cup \cdots \cup J_{g}$ embedded in the Seifert surface $\Sigma$, whose homology classes generate $H$, is called a \emph{derivative} of $K$ associated to $H$. Denote the set of all derivatives associated to $H$ by $\mathfrak{d} K /\mathfrak{d}H$. Note that if a knot $K$ has a (smoothly) slice derivative, then $K$ is (smoothly) slice.

For a link $L$, let $X_L:= S^3\sm \nu L$ denote the exterior of an open tubular neighbourhood of $L$ in $S^3$. Also, for a slice disc $D$, let $D^4 \sm \nu D$ denote the exterior of an open tubular neighbourhood of $D$ in $D^4$. Finally let~$M_L$ be the result of zero-framed surgery on $S^3$ along~$L$.

Denote the set of non-negative integers by $\mathbb{N}_0$ and denote the set of non-negative half integers by $\frac{1}{2}\mathbb{N}_0$.

\subsection{The rational Alexander module and the Blanchfield form}\label{The rational Alexander module and the Blanchfield form}

 Write $G:=\pi_1(M_K)$, let $\mathcal{A}(K)$ be the Alexander module of a knot $K$ and let $\AK$ be the rational Alexander module of $K$. Since the longitudes of $K$ lie in $G^{(2)}$,
$$\mathcal{A}(K) \cong G^{(1)} / G^{(2)} \text{ and } \AK \cong \mathbb{Q}[t,t^{-1}] \otimes_{\mathbb{Z}[t,t^{-1}]} G^{(1)} / G^{(2)}.$$
Here $G^{(k)}$ denotes the $k$th derived subgroup, where $G^{(0)} = G$ and $G^{(k+1)} = [G^{(k)},G^{(k)}]$.
Choose a meridian $g$ for the knot $K$.  Then the abelian group $G^{(1)}/G^{(2)}$ becomes a $\Z[t,t^{-1}]$-module via $t \cdot h := ghg^{-1}G^{(2)}$. As a rational vector space, $\AK$ has rank $d:=\deg \Delta_K(t)$.

The \emph{rational Blanchfield linking form}
\[\mathcal{B}\ell^{\Q} \colon \AK \times \AK \to \Q(t)/\Q[t,t^{-1}]\]
is a nonsingular, hermitian and sesquilinear form.
In addition, a submodule $P \subset \AK$ is called a \emph{lagrangian} if $P=P^{\perp}$, where
$$P^{\perp} := \{ x \in \AK \mid \Bl(x,p) = 0 \text{ for every } p \in P \}.$$
Since $\Bl$ is nonsingular, any lagrangian $P$ has rank $d/2$ as a rational vector space ($d$ is even since the Alexander polynomial of $K$ has a symmetric representative, that is $\Delta_K(t) \doteq \Delta_K(t^{-1})$).

Suppose that $K$ is a slice knot and $D$ is a slice disc, then $\ker(\AK\rightarrow \mathcal{A}^{\Q}(D^4 \sm \nu D))$ is a lagrangian (e.g.\ \cite[Theorem~4.4]{COT03}), where by definition $\mathcal{A}^{\Q}(D^4\sm \nu D) := H_1(D^4 \sm \nu D;\Q[t,t^{-1}])$ is the rational Alexander module of $D^4 \sm \nu D$.  More generally this holds with $D^4 \sm \nu D$ replaced by an $(n)$-solution $W$, for $n \geq 1$ (we will recall the definition of an $(n)$-solution in Section~\ref{section:defn-of-solvable-filtration}). We call $P$ the \emph{lagrangian associated to the slice disc $D$} if $P=\ker(\AK\rightarrow \mathcal{A}^\Q(D^4 \sm \nu D))$.

\begin{remark}
  Following Cochran-Harvey-Leidy~\cite{CHL10}, we use the term lagrangian for a self-annihilating submodule of the rational Alexander module with respect to the rational Blanchfield form, and the term metaboliser for the corresponding object with respect to a Seifert form.
\end{remark}

\subsection{Derivatives of Knots}\label{Derivatives of Knots}

Let $K$ be an algebraically slice knot, let $H$ be a metaboliser of the Seifert form of $K$ with respect to a Seifert surface $\Sigma$, and let $J = J_1\cup \dots \cup J_g$ be a derivative of $K$ associated with $H$, where $g$ is the genus~$\Sigma$. We will use the terminology of \cite[Definition~5.4]{CHL10}.

\begin{definition}\label{definition:Lagrangian}
Suppose that $\Sigma$ is a genus $g$ Seifert surface for $K$ and that $P \subset \AK$ is a lagrangian. We say that the metaboliser $H$ \emph{represents} $P$ if the image of $H$ under the map
$$H_1(\Sigma;\mathbb{Z}) \xhookrightarrow{1 \otimes \Id} \Q \otimes_{\Z} H_1(\Sigma;\mathbb{Z}) \xtwoheadrightarrow{i_*} \AK$$
spans $P$ as a $\mathbb{Q}$-vector space. Note that in order to define $i_*$ we need to fix a lift of $\Sigma$ to the infinite cyclic cover. However, it is easy to check that a metaboliser $H$ represents $P$ with respect to one choice of lift if and only if it represents $P$ with respect to all choices.
\end{definition}

\noindent Next we recall a lemma from \cite[Lemma~$5.5$]{CHL10}.

\begin{lemma}[{Cochran-Harvey-Leidy}]\label{lemma:Lagrangian}
Every lagrangian is represented by some metaboliser.
\end{lemma}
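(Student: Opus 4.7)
The plan is to construct a metaboliser representing the given lagrangian $P$ by lifting a basis of $P$ from $\AK$ to $H_1(\Sigma;\Z)$ for some Seifert surface $\Sigma$, and then using stabilization of the Seifert surface to arrange for the lifts to span a metaboliser.

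First, I would fix any Seifert surface $\Sigma_0$ for $K$ of genus $g_0$, with Seifert form $V_0$ on $H_1(\Sigma_0;\Z) \cong \Z^{2g_0}$. The inclusion of a chosen lift of $\Sigma_0$ into the infinite cyclic cover of $X_K$ induces a surjection $i_* \colon \Q \otimes H_1(\Sigma_0;\Z) \twoheadrightarrow \AK$, because the cover is built from translates of $X_K$ cut along $\Sigma_0$. The classical Kearton--Levine formula expresses $\Bl$ on the image of $i_*$ in terms of the matrix $tV_0 - V_0^T$; in particular, any metaboliser of $V_0$ maps to an isotropic subspace of $(\AK, \Bl)$, and by maximality of the lagrangian $P$, any such isotropic subspace containing $P$ must equal $P$. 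This reduces the problem to producing a rank-$g$ isotropic direct summand $H \subset H_1(\Sigma;\Z)$ whose $i_*$-image contains $P$.

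Next, I would lift a $\Q$-basis of $P$ through $i_*$ and clear denominators to obtain integral classes $\tilde p_1, \ldots, \tilde p_{d/2} \in H_1(\Sigma_0;\Z)$ whose images span $P$. In general the Seifert pairings $V_0(\tilde p_i, \tilde p_j)$ do not vanish. To fix this, stabilize $\Sigma_0$ by attaching 1-handles in $S^3 \setminus K$ to obtain a Seifert surface $\Sigma$ of larger genus $g \geq g_0$; this operation does not change the knot, nor the Alexander module, nor the Blanchfield form. Each stabilization adds a hyperbolic pair $(x_k, y_k)$ to $H_1(\Sigma;\Z)$ with $V(x_k, y_k) = 1$ and otherwise vanishing Seifert pairings, and with appropriate choice of the attaching arcs one can arrange that $i_*(x_k) = 0 \in \AK$. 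One then replaces $\tilde p_i$ by $\tilde p_i' := \tilde p_i + \sum_k a_{ik} x_k$ for integers $a_{ik}$ chosen to cancel the residual Seifert pairings. Since $i_*(x_k) = 0$, the images of the $\tilde p_i'$ still span $P$, and extending the family $\{\tilde p_i'\}$ by the unused $x_k$'s yields a rank-$g$ metaboliser $H$ with $i_*(H \otimes \Q) = P$ by the maximality argument.

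The main obstacle is to ensure that the integer linear system for the coefficients $a_{ik}$ is solvable, that is, that sufficiently many stabilizations provide enough degrees of freedom to cancel all pairings $V_0(\tilde p_i, \tilde p_j)$ and all self-pairings $V_0(\tilde p_i, \tilde p_i)$ simultaneously, while keeping the resulting classes linearly independent and spanning a direct summand of $H_1(\Sigma;\Z)$. This is a routine but careful matrix manipulation: one hyperbolic pair can be used to adjust each entry of the pairing matrix among the $\tilde p_i'$, and iterating over all entries clears the full matrix; integrality of the resulting solutions follows from primitivity of the $x_k$ in $H_1(\Sigma;\Z)$ together with the unimodularity of the hyperbolic summand.
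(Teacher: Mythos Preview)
The paper does not give its own proof of this lemma; it simply cites \cite[Lemma~5.5]{CHL10}.  So there is no in-paper argument to compare against, only the Cochran--Harvey--Leidy proof, which works on a \emph{fixed} Seifert surface using the algebraic correspondence between rational metabolisers of the Seifert form and lagrangians of the Blanchfield form (no stabilisation is needed).

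Your approach via stabilisation has a genuine gap.  A standard stabilisation adds a pair $(x_k,y_k)$ with $V(x_k,x_k)=V(y_k,y_k)=V(y_k,x_k)=0$, $V(x_k,y_k)=1$, and all pairings between $x_k,y_k$ and the old classes vanishing.  Your proposed correction is $\tilde p_i' = \tilde p_i + \sum_k a_{ik}x_k$, using only the $x_k$'s.  But then
\[
V(\tilde p_i',\tilde p_j') \;=\; V(\tilde p_i,\tilde p_j) + \sum_{k,l} a_{ik}a_{jl}\,V(x_k,x_l) \;=\; V(\tilde p_i,\tilde p_j),
\]
since the $x_k$ are mutually isotropic and orthogonal to $H_1(\Sigma_0)$.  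So the adjustment changes nothing: the residual pairings $V_0(\tilde p_i,\tilde p_j)$ are \emph{not} cancelled.  To get any cancellation you must use both $x_k$ and $y_k$; then $V(\tilde p_i',\tilde p_j') = V_0(\tilde p_i,\tilde p_j)+\sum_k a_{ik}b_{jk}$, and you can solve $AB^T=-V_0^{\mathrm{rest}}$.  But once both $x_k$ and $y_k$ appear in the $\tilde p_i'$, your claim that ``extending by the unused $x_k$'s yields a metaboliser'' fails: for instance $V(x_k,\tilde p_i')=b_{ik}$ and $V(\tilde p_i',y_k)=a_{ik}$, so neither the $x_k$ nor the $y_k$ are Seifert-orthogonal to the corrected classes.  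Producing the remaining $g_0$ classes of a genuine rank-$(g_0+N)$ metaboliser, and checking it is a direct summand, is the real work, and your sketch does not address it.
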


Let $b_i=[J_i]$ in $H_1(\Sigma)$, for $1 \leq i \leq g$. Then we can extend $\{b_1, \dots, b_g\}$ to a symplectic basis $\{a_1, \dots, a_g, b_1, \dots, b_g\}$, where $a_i$ is an intersection dual of $b_i$, for $1 \leq i \leq g$. From this we obtain a disc-band form of $\Sigma$, as depicted in Figure~\ref{figure:diskband}. We need one more proposition from \cite[Proposition~5.6]{CHL10}.


 \begin{figure}[ht]
  \labellist\small
  \pinlabel{$\alpha_1$} at -10 90
\pinlabel{$\beta_1$} at 203 90
\pinlabel{$\alpha_g$} at 258 90
\pinlabel{$\beta_g$} at 472 90
\pinlabel{$a_1$} at 65 28
\pinlabel{$b_1$} at 130 28
\pinlabel{$a_g$} at 330 28
\pinlabel{$b_g$} at 400 29
\endlabellist
\includegraphics[scale=0.7]{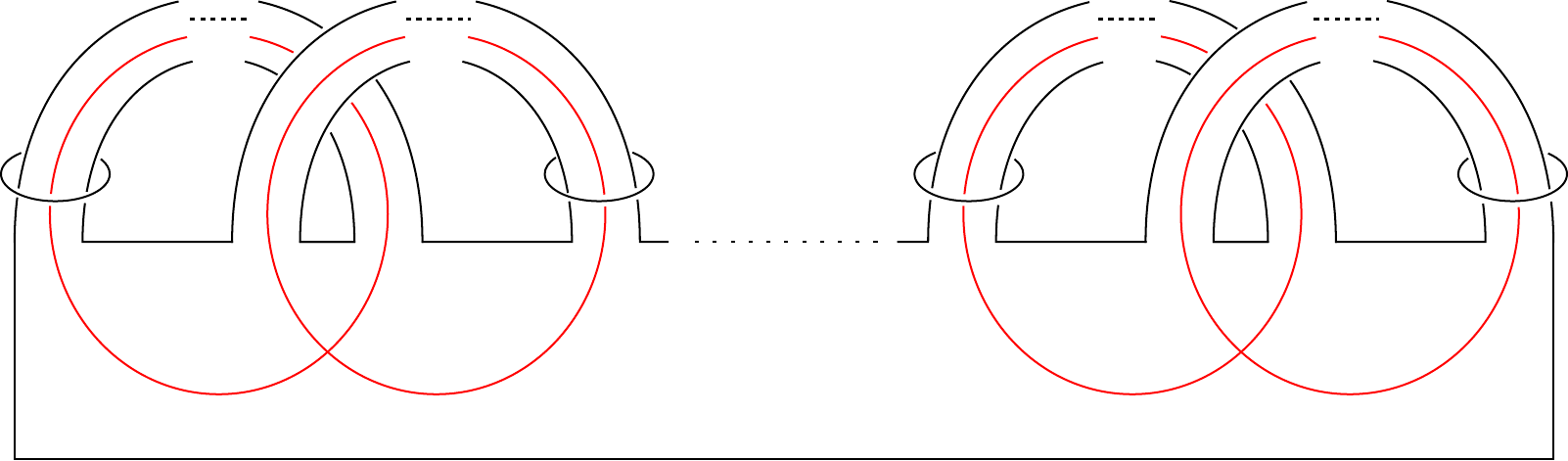}
    \caption{A disc-band form for $\Sigma$, a generating set $\{a_1,b_1,\dots,a_g,b_g\}$ for $H_1(\Sigma;\Z)$ and a dual generating set $\{\alpha_1,\beta_1,\dots,\alpha_g,\beta_g\}$ for $H_1(S^3 \sm \Sigma;\Z)$.}
    \label{figure:diskband}
  \end{figure}

\begin{proposition}[{Cochran-Harvey-Leidy}]\label{proposition:generators}
 Suppose $P \subset \AK$ is a lagrangian. Then for any Seifert surface $\Sigma$, any metaboliser $H$ representing $P$, and any symplectic basis $\{a_1, \dots, a_g, b_1, \dots, b_g\}$ of $H_1(\Sigma;\Z)$ with $\{b_1, \dots, b_g\}$ a basis for $H$, we have:
\begin{enumerate}
\item \label{item:prop-generators-1} The curves $\{b_1, \dots, b_g\}$ span $P$ in the rational vector space $\AK$.
\item \label{item:prop-generators-2} The curves $\{\phi(\beta_1), \dots, \phi(\beta_g)\}$ span $\AK/P$, where $\{\alpha_1, \dots, \alpha_g, \beta_1, \dots, \beta_g\}$ is the basis of $H_1(S^3 \sm \Sigma;\Z)$ dual to $\{a_1, \dots, a_g, b_1, \dots, b_g\}$ under the linking number in $S^3$, and
$$\phi \colon H_1(S^3 \sm \Sigma;\mathbb{Z}) \hookrightarrow \Q \otimes H_1(S^3 \sm  \Sigma;\mathbb{Z}) \xtwoheadrightarrow{i_*} \AK.$$
\end{enumerate}
\end{proposition}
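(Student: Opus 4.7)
My plan is to handle Part~(1) directly from Definition~\ref{definition:Lagrangian}, and to prove Part~(2) by combining the standard Seifert matrix presentation of the rational Alexander module with the metaboliser condition on $H$. For Part~(1), the definition says that when $H$ represents $P$, the image of $H$ under $i_*$ equals $P$ as a $\Q$-subspace of $\AK$; since $\{b_1, \dots, b_g\}$ is a $\Z$-basis of $H$, the images $i_*(b_1), \dots, i_*(b_g)$ span $P$.

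For Part~(2), I would use the (Mayer--Vietoris) presentation
\[
\AK \;\cong\; \frac{\Q[t,t^{-1}]^{2g}}{(tV - V^T)\,\Q[t,t^{-1}]^{2g}},
\]
where $V$ is the Seifert matrix of $\Sigma$ with respect to $\{a_1, \dots, a_g, b_1, \dots, b_g\}$ and the $2g$ generators of the free module are identified via Alexander/linking duality with $\phi(\alpha_1), \dots, \phi(\alpha_g), \phi(\beta_1), \dots, \phi(\beta_g)$. In particular these $2g$ elements generate $\AK$ as a $\Q[t,t^{-1}]$-module. Because $H$ is a metaboliser for $\beta_\Sigma$, every entry $V_{g+i, g+j} = \beta_\Sigma(b_i, b_j)$ vanishes, and the symplectic relation $V - V^T = \pm J$ forces
\[
V = \begin{pmatrix} A & B \\ C & 0 \end{pmatrix} \quad \text{with} \quad C - B^T = \pm I.
\]

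The next step is to identify $i_*(b_i)$ inside $\AK$. Since $b_i$ is a curve on $\Sigma$, $i_*(b_i)$ equals the class $\phi(b_i^+)$ of the positive push-off. Expanding $b_i^+$ in the linking-dual basis $\{\alpha_j, \beta_j\}$ of $H_1(S^3 \sm \Sigma; \Z)$ yields
\[
b_i^+ \;=\; \sum_j \lk(b_i^+, a_j)\,\alpha_j \;+\; \sum_j \lk(b_i^+, b_j)\,\beta_j.
\]
The second sum vanishes because $\lk(b_i^+, b_j) = \beta_\Sigma(b_i, b_j) = 0$, so $i_*(b_i) = \sum_j C_{ij}\,\phi(\alpha_j) \in P$; equivalently $C\phi(\alpha) \in P^g$. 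Reading off the lower block of the relation $(tV - V^T)\phi = 0$ gives $(tC - B^T)\phi(\alpha) = 0$, which combined with $C - B^T = \pm I$ rearranges to an identity of the form $\phi(\alpha) = \pm(t-1)\,C\phi(\alpha)$. Since $P$ is a $\Q[t,t^{-1}]$-submodule, $C\phi(\alpha) \in P^g$ then forces $\phi(\alpha_i) \in P$ for every $i$. Combined with the fact that $\{\phi(\alpha_i), \phi(\beta_j)\}$ generate $\AK$ as a $\Q[t,t^{-1}]$-module, this establishes that $\{\phi(\beta_1), \dots, \phi(\beta_g)\}$ span $\AK/P$.

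The main obstacle I anticipate is bookkeeping of conventions: the precise form of the Seifert matrix presentation of $\AK$ (whether $tV - V^T$ or $V - tV^T$, depending on the chosen orientation of the meridian realising $t$), the identification of the free-module generators with the dual basis $\{\alpha_i, \beta_j\}$ via Alexander duality, and the correct sign of $C - B^T = \pm I$ in the chosen symplectic basis. These are all standard but must be fixed consistently from the outset; once they are, the brief algebraic manipulation above closes the argument.
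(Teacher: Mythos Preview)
The paper does not prove this proposition; it is quoted from \cite[Proposition~5.6]{CHL10} without argument, so there is no in-paper proof to compare against.

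Your proposal is the standard approach and is correct.  Part~(1) is immediate from Definition~\ref{definition:Lagrangian}, as you say.  For Part~(2), the Seifert--surface presentation of $\AK$ with generators $\phi(\alpha_i),\phi(\beta_j)$ is exactly the right tool: the metaboliser condition makes the lower-right $g\times g$ block of $V$ vanish, so $i_*(b_i)=\phi(b_i^+)$ is a $\Q$-combination of the $\phi(\alpha_j)$ alone and lies in $P$; the lower block relation combined with $C-B^T=\pm I$ then forces each $\phi(\alpha_j)\in P$, whence the $\phi(\beta_j)$ generate $\AK/P$.  Your caveat about fixing the sign and transpose conventions for $tV-V^T$ versus $V-tV^T$, and for which push-off corresponds to multiplication by $t$, is well placed --- once those are pinned down, the algebra is exactly as you describe.

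One refinement worth making explicit: as written, your argument shows the $\phi(\beta_j)$ generate $\AK/P$ as a $\Q[t,t^{-1}]$-module.  The application in Proposition~\ref{proposition:B} (under the hypothesis $\deg\Delta_K(t)=2g$) uses that they span as a $\Q$-vector space.  To upgrade, use the \emph{other} block of relations, coming from the $a_i$: in $\AK/P$ one gets $tC^T[\phi(\beta)]\equiv B[\phi(\beta)]$, and since $B=C^T\mp I$ this shows the $\Q$-span of the $[\phi(\beta_j)]$ is stable under $t$ (and symmetrically under $t^{-1}$), hence is a $\Q[t,t^{-1}]$-submodule, hence is all of $\AK/P$.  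This is a one-line addition once your conventions are fixed.
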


Given a derivative link $J$, we will use Proposition~\ref{proposition:generators} to construct a map $f_J \colon \pi_1(M_J) \rightarrow \mathcal{A}(K)/\widetilde{P}$, where \[\widetilde{P}:=\ker(\mathcal{A}(K) \rightarrow \AK \rightarrow \AK/P).\]

We associate the meridian of the band on which $J_i$ lies ($\beta_i$ of Figure~\ref{figure:diskband}) with a meridian $\mu_i$ of $J_i$.
In order to determine a homotopy class of maps $f_J \colon M_J \rightarrow B(\mathcal{A}(K)/\widetilde{P})$, it suffices to define the image of each meridian $\mu_i \in \pi_1(M_J)$ in $\mathcal{A}(K)/\widetilde{P}$, since any map $\pi_1(M_J) \to \mathcal{A}(K)/\widetilde{P}$ factors through the abelianisation $H_1(M_J;\Z)$.  Send $\mu_i$ to the image of $\beta_i$ under the map $$H_1(S^3\sm \nu \Sigma) \rightarrow \mathcal{A}(K) \rightarrow \mathcal{A}(K)/\widetilde{P}$$
to determine $f_J \colon \pi_1(M_J) \to \mathcal{A}(K)/\widetilde{P}$, and thence (up to homotopy) a map $f_J \colon M_J \to B(\mathcal{A}(K)/\widetilde{P})$.


\subsection{The solvable filtration and the doubly solvable filtration}\label{section:defn-of-solvable-filtration}

We briefly recall the definitions and some basic facts about the solvable filtrations, for the convenience of the reader.

\begin{definition}[{$n$-solvable filtration}]~\label{defn:n-solvable}
 We say that a knot $K$ is \emph{$(n)$-solvable} for $n\in\mathbb{N}_0$ if the zero-framed surgery manifold $M_K$ is the boundary of a compact oriented $4$-manifold $W$ with the inclusion induced map $H_1(M_K;\Z) \to H_1(W;\Z)$ an isomorphism, and such that $H_2(W;\Z)$ has a basis consisting, for some $k$, of $2k$ embedded, connected, compact, oriented surfaces $L_1,\dots ,L_k,D_1,\dots,D_k$ with trivial normal bundles satisfying:
\begin{enumerate}[(i)]
{\setlength\itemindent{15pt}\item    $\pi_1(L_i) \subset \pi_1(W)^{(n)}$ and $\pi_1(D_j) \subset \pi_1(W)^{(n)}$ for all $i,j=1,\dots,k$;}
{\setlength\itemindent{15pt}\item the geometric intersection numbers are $L_i \cdot L_j = 0 = D_i\cdot D_j$ and $L_i \cdot D_j = \delta_{ij}$ for all $i,j =1,\dots,k$.}
\end{enumerate}
Such a $4$-manifold $W$ is called an \emph{$(n)$-solution}.  If in addition $\pi_1(L_i) \subset \pi_1(W)^{(n+1)}$ for all $i$, then $W$ is an \emph{$(n.5)$-solution} and $K$ is \emph{$(n.5)$-solvable}.  The subgroup of $\mathcal{C}$ of $(k)$-solvable knots is denoted $\mathcal{F}_{k}$, for any $k \in \frac{1}{2}\mathbb{N}_0$.
\end{definition}

Note that a slice knot is $(n)$-solvable for all $n$, and the above definition naturally extends to links.
The first two graded quotients of the solvable filtration are well understood. To wit, a knot is $(0)$-solvable if and only if it has vanishing Arf invariant, while a knot is $(0.5)$-solvable if and only if it is algebraically slice. Moreover, the iterated graded quotients of the solvable filtration are all highly non-trivial. In fact, it was shown in \cite{CHL09, CHL11a} that $\mathcal{F}_n/\mathcal{F}_{n.5}$ contains subgroups $\mathbb{Z}^\infty \oplus \mathbb{Z}^\infty_2$ for any $n\in\mathbb{N}_0$. On the other hand, there is not much known about the other quotients, $\mathcal{F}_{n.5}/\mathcal{F}_{n+1}$.   The knots studied in this paper are related to the question of whether $\mathcal{F}_{0.5}/\mathcal{F}_{1}$ is nontrivial. Indeed, we show that the analogous difference is nontrivial in the doubly solvable filtration. We recall the definition of this filtration, due to Taehee Kim~\cite{Kim06}, next.
\medskip

\begin{definition}[{Doubly solvable filtration}]~\label{defn:doubly-n-solvable}
We say that a knot $K$ is \emph{$(n,m)$-solvable}, for $n,m\in\frac{1}{2}\mathbb{N}_0$, if the zero-framed surgery manifold $M_K$ is the boundary of an
$(n)$-solution $W_n$ and an $(m)$-solution $W_m$ such that the fundamental group of the union $W_n \cup_{M_K} W_m$ of $W_n$ and $W_m$ along their boundary is isomorphic to $\mathbb{Z}$. The set of all $(n,m)$-solvable knots is denoted by $\mathcal{F}_{n,m}$. We say that an $(n,n)$-solvable knot is \emph{doubly $(n)$-solvable}.
\end{definition}

\noindent A doubly slice knot is $(n,m)$ solvable for all $n,m\in\frac{1}{2}\mathbb{N}_0$~\cite{Kim06}.  We will frequently use the following fact~\cite[Theorem~4.4]{COT03}.

\begin{lemma}\label{lemma-kernel-metaboliser-for-Bl-1-solutions}
  Let $K$ be a knot with a $(1)$-solution $W$.  Let $P:= \ker\big(H_1(M_K;\Q[\Z]) \to H_1(W;\Q[\Z])\big)$.  Then $P$ is a lagrangian for the rational Blanchfield form of $K$.
\end{lemma}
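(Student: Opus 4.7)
The plan is to show $P = P^{\perp}$, which by definition means that $P$ is a lagrangian for $\Bl$. I would split the argument into the two containments. For $P \subseteq P^{\perp}$, I would use the geometric description of the Blanchfield pairing as an equivariant linking form. Let $\L := \Q[t,t^{-1}]$ and let $\wt{M}_K$, $\wt{W}$ be the infinite cyclic covers corresponding to abelianisation (these are compatible because $H_1(M_K;\Z) \to H_1(W;\Z)$ is an isomorphism). Given $x, y \in P$, their lifts to $\wt{M}_K$ become null-homologous in $\wt{W}$ and hence bound rational $2$-chains $X, Y \subset \wt{W}$. The pairing $\Bl(x, y)$ can then be computed as the equivariant intersection of these bounding chains inside $\wt{W}$; because $X$ and $Y$ already live in the cover $\wt{W}$ (rather than only in $\wt{M}_K$), no annihilator needs to be inverted, the intersection lands directly in $\L$, and $\Bl(x, y)$ vanishes modulo $\L$. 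Note that this first step actually only uses that $H_1(M_K;\Z) \to H_1(W;\Z)$ is an isomorphism and not the full $(1)$-solution hypothesis.

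The reverse containment $P^{\perp} \subseteq P$ is proved by a rank count. Nonsingularity of $\Bl$ on the torsion $\L$-module $\AK$ yields $\dim_{\Q} P + \dim_{\Q} P^{\perp} = \dim_{\Q} \AK$, so combined with $P \subseteq P^{\perp}$ it suffices to prove the half-rank identity $\dim_{\Q} P = \tfrac{1}{2}\dim_{\Q} \AK$. I would extract this from the long exact sequence of the pair $(W, M_K)$ with $\L$-coefficients
\[H_2(W;\L) \to H_2(W, M_K;\L) \xrightarrow{\p} H_1(M_K;\L) \to H_1(W;\L) \to 0,\]
identifying $P = \im(\p)$. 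Poincar\'{e}--Lefschetz duality $H_2(W, M_K;\L) \cong H^2(W;\L)$, the universal coefficient theorem over the PID $\L$, and the observation that $(1)$-solvability forces $H_*(W;\Q) \cong H_*(S^1;\Q)$ (so that every $\L$-module appearing is $\L$-torsion) together make the necessary rank bookkeeping go through.

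The main obstacle is the rank arithmetic in this last step: one must carefully account for the $\Ext^1_{\L}$ contributions from torsion submodules arising in the universal coefficient theorem, and precisely match the ranks of $H_2(W;\L)$ and $H_2(W, M_K;\L)$ so that $\im(\p)$ has exactly half the $\Q$-dimension of $\AK$. This computation is carried out in detail in \cite[Theorem 4.4]{COT03}, of which the present lemma is the direct special case $n=1$ after restricting to rational Alexander modules.
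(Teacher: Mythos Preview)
The paper does not actually prove this lemma; it simply cites \cite[Theorem~4.4]{COT03} and moves on. Your proposal ultimately does the same, so in that sense it matches the paper. Your sketch of the $P \subseteq P^{\perp}$ step is fine in spirit.

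However, your outline of the rank count contains a genuine error. You assert that $(1)$-solvability forces $H_*(W;\Q) \cong H_*(S^1;\Q)$, but this is false: the definition of an $(n)$-solution explicitly allows $H_2(W;\Z) \cong \Z^{2k}$ for any $k \geq 0$, with the lagrangian and dual surfaces $L_i, D_j$ providing a basis. A slice disc exterior happens to have $k=0$, but a general $(1)$-solution does not. Consequently $H_2(W;\L)$ need not be $\L$-torsion, and the ``rank bookkeeping'' you describe does not go through as stated. The actual argument in \cite[Theorem~4.4]{COT03} uses the $(1)$-solution hypothesis in an essential way here: because $\pi_1(L_i) \subset \pi_1(W)^{(1)}$, the lagrangian surfaces lift to the infinite cyclic cover and therefore contribute classes to $H_2(W;\L)$, and it is these lifted classes (together with the hyperbolic intersection structure with the duals) that force the half-rank condition on $P$. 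Since you end by deferring to \cite{COT03} anyway, the proposal is not wrong as a citation, but the preceding sentence misrepresents why the computation works.
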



\noindent We call $P$ the \emph{lagrangian associated to $W$}.

\section{A $\Z[\Z]$ homology ribbon obstruction}\label{Ribbon Obstruction}

In this section we define a homology ribbon obstruction and will introduce some of its properties.  This obstruction will give rise to a homotopy ribbon obstruction, and a doubly slice obstruction.  Our obstruction also works in the context of the solvable filtration, so we will work in this generality.

\begin{definition}\label{defn:homology-ribbon-1-solvable}
  We say that a knot $K$ is \emph{homology ribbon $(1)$-solvable} if there is a $(1)$-solution $W$ with $\partial W = M_K$ such that the inclusion induced map $H_1(M_K;\Z[\Z]) \to H_1(W;\Z[\Z])$ is surjective. Such a $4$-manifold $W$ is called a \emph{homology ribbon $(1)$-solution}.
\end{definition}

\begin{lemma}\label{lemma:homotopyribbonsol}
  Suppose that $K$ is homotopy ribbon $(1)$-solvable $($for example if $K$ is homotopy ribbon$)$.  Then $K$ is homology ribbon $(1)$-solvable.
\end{lemma}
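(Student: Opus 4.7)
The plan is to take $W$ to be a $(1)$-solution witnessing the homotopy ribbon $(1)$-solvability of $K$, so that $\phi := \iota_* \colon \pi_1(M_K) \twoheadrightarrow \pi_1(W)$ is surjective, and to prove that the \emph{same} $W$ also witnesses the homology ribbon $(1)$-solvability. The key observation is that a surjection of groups passes to their commutator subgroups, and hence to their abelianisations.

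First I would set up notation. Write $\pi := \pi_1(M_K)$ and $\pi' := \pi_1(W)$. Since $W$ is a $(1)$-solution, the inclusion induces an isomorphism $H_1(M_K;\Z) \xrightarrow{\cong} H_1(W;\Z) \cong \Z$, and the abelianisation $\pi' \to \Z$ is compatible with the abelianisation $\pi \to \Z$ via $\phi$. Hence the infinite cyclic covers $\widetilde{M}_K \to M_K$ and $\widetilde{W} \to W$ are compatibly defined: the pullback of $\widetilde{W} \to W$ along the inclusion $M_K \hookrightarrow W$ is classified by the composition $\pi \xrightarrow{\phi} \pi' \to \Z$, which equals the abelianisation of $\pi$, so this pullback is precisely $\widetilde{M}_K$. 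This gives a canonical lift $\widetilde{\iota} \colon \widetilde{M}_K \to \widetilde{W}$ of the inclusion. Under the identifications $H_1(M_K;\Z[\Z]) \cong H_1(\widetilde{M}_K;\Z) \cong \pi^{(1)}/\pi^{(2)}$ and $H_1(W;\Z[\Z]) \cong H_1(\widetilde{W};\Z) \cong (\pi')^{(1)}/(\pi')^{(2)}$, the map on $\Z[\Z]$-homology induced by $\iota$ is the map induced by $\phi$ on the first derived quotients.

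It remains to show this map is surjective. Since $\phi$ is surjective, and commutators generate the commutator subgroup, we have $\phi(\pi^{(1)}) = (\pi')^{(1)}$: given $[a',b'] \in (\pi')^{(1)}$, lift $a' = \phi(a)$, $b' = \phi(b)$ and note $\phi([a,b]) = [a',b']$. Thus $\phi|_{\pi^{(1)}} \colon \pi^{(1)} \twoheadrightarrow (\pi')^{(1)}$ is surjective, and the induced map $\pi^{(1)}/\pi^{(2)} \to (\pi')^{(1)}/(\pi')^{(2)}$ on abelianisations is surjective as well. Therefore $\iota_* \colon H_1(M_K;\Z[\Z]) \to H_1(W;\Z[\Z])$ is surjective, so $W$ is a homology ribbon $(1)$-solution. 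The argument is routine group-theoretic bookkeeping; the only subtlety is the identification of $\Z[\Z]$-homology with the homology of the infinite cyclic cover and the compatibility of the two covers, which follows immediately from $H_1(M_K;\Z) \cong H_1(W;\Z)$.
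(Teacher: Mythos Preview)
Your proof is correct and follows essentially the same approach as the paper: both use the identification $H_1(-;\Z[\Z]) \cong \pi_1(-)^{(1)}/\pi_1(-)^{(2)}$ and the elementary fact that a surjection of groups induces a surjection on derived quotients. The paper states this in one sentence, while you have spelled out the compatibility of the infinite cyclic covers and the group-theoretic step more carefully.
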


\begin{proof}
  The map on fundamental groups being surjective implies that the map on $\Z[\Z]$-homology is surjective, since $H_1(W;\Z[\Z]) \cong \pi_1(W)^{(1)}/\pi_1(W)^{(2)}$.
\end{proof}

\noindent The following lemma is from \cite[Proposition 2.10]{Kim06}.

\begin{lemma}\label{lemma:kim06}
  Suppose that $K \in \mathcal{F}_{1,1}$ $($for example, every doubly slice knot lies in $\mathcal{F}_{1,1}$$)$.  Then there exist two homology ribbon $(1)$-solutions $W_1$ and $W_2$ such that the inclusion induced maps give rise to an isomorphism
  \[\xymatrix{H_1(M_K;\Z[\Z]) \ar[r]^-{(i_1,i_2)}_-{\cong} & H_1(W_1;\Z[\Z]) \oplus H_1(W_2;\Z[\Z])}\]
   and such that both of the summands become lagrangians for the rational Blanchfield form after tensoring with~$\Q$. In particular, every doubly $(1)$-solvable knot is homology ribbon $(1)$-solvable.
\end{lemma}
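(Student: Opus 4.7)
The plan is to mirror Kim's Mayer--Vietoris argument from \cite[Proposition~2.10]{Kim06}. Start with $K \in \mathcal{F}_{1,1}$; Definition~\ref{defn:doubly-n-solvable} provides $(1)$-solutions $W_1$ and $W_2$ with $\partial W_1 = \partial W_2 = M_K$ and $\pi_1(V) \cong \Z$, where $V := W_1 \cup_{M_K} W_2$ is a closed oriented $4$-manifold. The hypothesis $\pi_1(V) \cong \Z$ is the key input: since $\Z$ is abelian we have $H_1(V;\Z[\Z]) \cong \pi_1(V)^{(1)}/\pi_1(V)^{(2)} = 0$.

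Next I would set up the Mayer--Vietoris sequence for $V = W_1 \cup_{M_K} W_2$ with $\Z[\Z]$-coefficients, where the local system on each piece is pulled back from the abelianisation of $\pi_1(V)$. Using $H_1(V;\Z[\Z]) = 0$, a piece of the sequence reads
\[ H_2(V;\Z[\Z]) \xrightarrow{\partial} H_1(M_K;\Z[\Z]) \xrightarrow{(i_1,-i_2)} H_1(W_1;\Z[\Z]) \oplus H_1(W_2;\Z[\Z]) \to 0. \]
Projecting onto each factor shows that each $i_k$ is separately surjective, so both $W_1$ and $W_2$ are homology ribbon $(1)$-solutions. This already settles the ``in particular'' clause, since a doubly $(1)$-solvable knot lies in $\mathcal{F}_{1,1}$ by definition.

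The main step, and the place where I expect the chief obstacle, is to upgrade this surjection to an isomorphism by showing that the connecting map $\partial$ vanishes. My plan is to apply Poincar\'e duality on the closed oriented $4$-manifold $V$ with $\Z[\Z]$-coefficients, giving $H_2(V;\Z[\Z]) \cong H^2(V;\Z[\Z])$, and to combine this with a universal coefficient style computation that exploits $H_1(V;\Z[\Z]) = 0$. The outcome should be that every class in $H_2(V;\Z[\Z])$ is representable by a cycle supported in $W_1 \sqcup W_2$, forcing $\partial = 0$ in the Mayer--Vietoris sequence above.

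Once the $\Z[\Z]$-isomorphism $(i_1, i_2)$ is in hand, tensoring with $\Q$ yields a decomposition $H_1(M_K;\Q[\Z]) \cong H_1(W_1;\Q[\Z]) \oplus H_1(W_2;\Q[\Z])$. Setting $P_k := \ker\bigl(H_1(M_K;\Q[\Z]) \to H_1(W_k;\Q[\Z])\bigr)$ then forces $P_1 \cap P_2 = 0$ and $P_1 + P_2 = H_1(M_K;\Q[\Z])$, and each $P_k$ is a lagrangian for the rational Blanchfield form by Lemma~\ref{lemma-kernel-metaboliser-for-Bl-1-solutions}, completing the proof.
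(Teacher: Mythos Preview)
Your setup via Mayer--Vietoris is exactly right, and the surjectivity of $(i_1,i_2)$ (and of each $i_k$ separately) follows cleanly from $H_1(V;\Z[\Z])=0$. The paper itself gives no proof here and simply cites \cite[Proposition~2.10]{Kim06}, so there is nothing to compare against in the paper.

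The genuine gap is your injectivity step. The sentence ``apply Poincar\'e duality \ldots\ the outcome should be that every class in $H_2(V;\Z[\Z])$ is representable by a cycle supported in $W_1 \sqcup W_2$'' is not an argument; Poincar\'e duality on $V$ identifies $H_2(V;\Z[\Z])$ with $H^2(V;\Z[\Z])$, but nothing you have written explains why this forces surjectivity of $H_2(W_1;\Z[\Z]) \oplus H_2(W_2;\Z[\Z]) \to H_2(V;\Z[\Z])$. As stated, this step does not go through.

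The clean fix is to reverse the order of your last two paragraphs. You already know from Lemma~\ref{lemma-kernel-metaboliser-for-Bl-1-solutions} that each $P_k := \ker\bigl(i_k^{\Q}\bigr)$ is a lagrangian, hence $\dim_{\Q} P_k = g$ where $2g = \dim_{\Q}\mathcal{A}^{\Q}(K)$. Since $i_k$ is surjective over $\Z[\Z]$, it is surjective over $\Q[\Z]$, so $\dim_{\Q} H_1(W_k;\Q[\Z]) = 2g - g = g$. Thus $(i_1^{\Q},i_2^{\Q})$ is a surjection between $\Q$-vector spaces of the same finite dimension $2g$, hence an isomorphism over $\Q[\Z]$. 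Now use that the integral Alexander module $\mathcal{A}(K)$ of a knot is $\Z$-torsion free (a classical fact, used implicitly throughout the paper via the injection $\mathcal{A}(K) \hookrightarrow \mathcal{A}^{\Q}(K)$); the kernel of $(i_1,i_2)$ over $\Z[\Z]$ maps to zero after tensoring with $\Q$, so it is $\Z$-torsion and therefore vanishes. This gives the $\Z[\Z]$-isomorphism without any appeal to $\partial = 0$ or Poincar\'e duality on $V$.
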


The next lemma follows from the proof of {\cite[Proposition 2.10]{Kim06}}.  Since it was not explicitly stated in \cite{Kim06}, we give a quick proof.

\medskip
\begin{lemma}~\label{lemma:kim06half}
Suppose that $K \in \mathcal{F}_{0.5,1}$. Then $K$ is homology ribbon $(1)$-solvable.
\end{lemma}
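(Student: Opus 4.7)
The plan is to take the hypothesis $K\in\mathcal{F}_{0.5,1}$ and directly show that the given $(1)$-solution is already homology ribbon. By definition there exist a $(0.5)$-solution $W_{0.5}$ and a $(1)$-solution $W_1$ with $\partial W_{0.5}=\partial W_1=M_K$ and $\pi_1(V)\cong\Z$, where $V:=W_{0.5}\cup_{M_K}W_1$. I will argue that the inclusion induced map $i_1\colon H_1(M_K;\Z[\Z])\to H_1(W_1;\Z[\Z])$ is surjective, which is precisely what is needed for $W_1$ to witness that $K$ is homology ribbon $(1)$-solvable.

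First I would set up the coefficient systems consistently. Since $W_{0.5}$ and $W_1$ are both solutions, the inclusions induce isomorphisms $H_1(M_K;\Z)\xrightarrow{\cong}H_1(W_i;\Z)\cong\Z$, and a small Mayer--Vietoris computation over $\Z$ (or the assumption $\pi_1(V)\cong\Z$) shows $H_1(V;\Z)\cong\Z$ with each $H_1(W_i;\Z)\to H_1(V;\Z)$ an isomorphism. Consequently, the $\Z[\Z]$-local system on $V$ coming from abelianization of $\pi_1(V)$ pulls back on each of $M_K$, $W_{0.5}$, $W_1$ to the standard infinite cyclic cover coefficient system, so the usual $\Z[\Z]$-homology groups that appear in Definition \ref{defn:homology-ribbon-1-solvable} and Proposition \ref{proposition:generators} are the relevant ones.

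Next I would write down the Mayer--Vietoris sequence for $V=W_{0.5}\cup_{M_K}W_1$ with $\Z[\Z]$ coefficients (equivalently, the Mayer--Vietoris sequence on integral homology for the infinite cyclic cover $\widetilde V\to V$):
\[
\cdots\to H_1(M_K;\Z[\Z])\xrightarrow{(i_{0.5},\,i_1)} H_1(W_{0.5};\Z[\Z])\oplus H_1(W_1;\Z[\Z])\to H_1(V;\Z[\Z])\to\cdots
\]
The crux is that $H_1(V;\Z[\Z])=0$: since $\pi_1(V)\cong\Z$, the universal abelian cover $\widetilde V$ coincides with the universal cover and is therefore simply connected, so $H_1(V;\Z[\Z])=H_1(\widetilde V;\Z)=0$.

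From exactness, $(i_{0.5},i_1)$ is surjective, and projecting onto the second factor shows $i_1$ is surjective. Thus $W_1$ is a homology ribbon $(1)$-solution in the sense of Definition \ref{defn:homology-ribbon-1-solvable}, proving that $K$ is homology ribbon $(1)$-solvable. The only real subtlety, which is mostly bookkeeping, is verifying that the coefficient systems match so that the Mayer--Vietoris sequence can be applied without modification; everything else is automatic once $\pi_1(V)\cong\Z$ is exploited.
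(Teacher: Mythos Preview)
Your proof is correct and follows essentially the same approach as the paper: both use the Mayer--Vietoris sequence with $\Z[\Z]$ coefficients for $V=W_{0.5}\cup_{M_K}W_1$ and the key observation that $\pi_1(V)\cong\Z$ forces $H_1(V;\Z[\Z])=0$, yielding surjectivity of $i_1$. The only cosmetic difference is that the paper first replaces $W_{0.5}$ by a $(0.5)$-solution with $\pi_1\cong\Z$ so that $H_1(W_{0.5};\Z[\Z])=0$, whereas you skip this and simply project the surjection $(i_{0.5},i_1)$ onto the second summand; your route is if anything slightly more direct.
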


\begin{proof}
Suppose that $K \in \mathcal{F}_{0.5,1}$, and let $(W_{0.5},W_1)$ be the given $(0.5,1)$-solution pair.  We may and will assume that $\pi_1(W_{0.5}) \cong \Z$ and so $H_1(W_{0.5};\Z[\Z])=0$.   The Mayer-Vietoris sequence for gluing these together contains:
\[H_1(M_K;\Z[\Z]) \to H_1(W_{0.5};\Z[\Z]) \oplus H_1(W_1;\Z[\Z]) \to H_1(W_{0.5} \cup_{M_K} W_1;\Z[\Z]). \]
Since $\pi_1(W_{0.5} \cup_{M_K} W_1) \cong \Z$, it follows that $H_1(W_{0.5} \cup W_1;\Z[\Z])=0$ and so $H_1(M_K;\Z[\Z]) \to H_1(W_1;\Z[\Z])$ is surjective.
Thus $K$ is homology ribbon $(1)$-solvable.
\end{proof}


%
%
%

\noindent We have learnt that obstructions to homology ribbon can be used to obstruct knots from being homotopy ribbon and doubly slice.

\subsection{Definition of the homology ribbon obstruction}

Let $P\subset \AK$ be a lagrangian for the rational Blanchfield form and let
\[\widetilde{P} = \ker(\mathcal{A}(K) \hookrightarrow \AK \rightarrow \AK/P).\]
As above let $G:=\pi_1(M_K)$.  We have a map
$$\phi_P \colon G \rightarrow G/G^{(2)} \toiso \mathbb{Z} \ltimes \mathcal{A}(K) \rightarrow \mathbb{Z} \ltimes \mathcal{A}(K)/\widetilde{P}.$$
Here the identification $\vartheta \colon G/G^{(2)} \toiso \mathbb{Z} \ltimes \mathcal{A}(K)$ depends on a choice of oriented meridian for $K$, which determines a splitting $\theta \colon \Z \to G/G^{(2)}$ of the abelianisation homomorphism.  We have to make such a choice in order to define the invariant that we will introduce below, so we should investigate the dependence of the outcome on this choice.

Write $\Inn(\Gamma)$ for the inner automorphisms of a group $\Gamma$, and for a subgroup $H \leq \Gamma$ write $\Inn_H(\Gamma) \leq \Inn(\Gamma)$ for the subgroup containing conjugations by elements of $H$.

\begin{lemma}\label{lemma:inner-automorphisms}
  Let $\theta_1,\theta_2 \colon \Z \to G/G^{(2)}$ be two choices of splitting as above, and denote the resulting identifications by $\vartheta_1,\vartheta_2 \colon G/G^{(2)} \toiso \Z \ltimes \mathcal{A}(K)$.
   \begin{enumerate}[(i)]
{\setlength\itemindent{15pt}\item  There is an inner automorphism $\gamma \colon \Z \ltimes \mathcal{A}(K) \to \Z \ltimes \mathcal{A}(K)$ in $\Inn_{\mathcal{A}(K)}(\Z \ltimes \mathcal{A}(K))$ such that the $\gamma \circ \vartheta_1 = \vartheta_2$.}
{\setlength\itemindent{15pt}\item  Every inner automorphism $\gamma \in \Inn_{\mathcal{A}(K)}(\Z \ltimes \mathcal{A}(K))$ acts by the identity on $\mathcal{A}(K)$.  In particular it preserves $\widetilde{P} \leq \mathcal{A}(K) \leq G/G^{(2)}$.}
\end{enumerate}
\end{lemma}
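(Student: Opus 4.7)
The plan is to dispatch (ii) first, since its content is what makes (i) tractable. For (ii), any element of $\Inn_{\mathcal{A}(K)}(\Z \ltimes \mathcal{A}(K))$ is conjugation by some $(0,b)$ with $b \in \mathcal{A}(K)$. Because $\mathcal{A}(K)$ is an abelian subgroup, a one-line semidirect-product computation shows that such a conjugation fixes $\mathcal{A}(K)$ pointwise, and hence preserves the submodule $\widetilde{P} \subseteq \mathcal{A}(K)$.

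For (i), I would start from the observation that both $\theta_1$ and $\theta_2$ are splittings of the abelianisation $G/G^{(2)} \twoheadrightarrow \Z$, whose kernel is $\mathcal{A}(K) = G^{(1)}/G^{(2)}$. Hence there is a unique $a \in \mathcal{A}(K)$ with $\theta_2(1) = a \cdot \theta_1(1)$. Applying $\vartheta_1$ yields $\vartheta_1(\theta_2(1)) = (1,a)$, while by definition $\vartheta_2(\theta_2(1)) = (1,0)$. Since $\vartheta_1$ and $\vartheta_2$ both restrict to the identity on $\mathcal{A}(K)$, the automorphism $\eta := \vartheta_2 \circ \vartheta_1^{-1}$ of $\Z \ltimes \mathcal{A}(K)$ fixes $\mathcal{A}(K)$ pointwise and sends $(1,a) \mapsto (1,0)$. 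As $\theta_2(1)$ together with $\mathcal{A}(K)$ generates $G/G^{(2)}$, it suffices to produce $b \in \mathcal{A}(K)$ for which conjugation by $(0,b)$ has the same effect on $(1,a)$ as $\eta$ does.

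A direct computation in the semidirect product gives
\[
(0,b)\,(1,a)\,(0,-b) \;=\; (1,\, a - (t-1)b),
\]
so the problem reduces to solving $(t-1)b = a$ inside $\mathcal{A}(K)$. This is the single non-formal step and it is where I expect the main subtlety to lie: the equation needs to be solvable over the \emph{integral} Alexander module, not merely rationally. Here I would invoke the classical fact that multiplication by $(t-1)$ is an automorphism of $\mathcal{A}(K) \cong H_1(X_\infty;\Z)$ for a knot, which is equivalent to $\Delta_K(1) = \pm 1$ via Milnor's theorem on infinite cyclic covers of knot exteriors. Granting such a $b$, set $\gamma$ to be conjugation by $(0,b)$; by part (ii) it fixes $\mathcal{A}(K)$ pointwise, and by construction $\gamma(\vartheta_1(\theta_2(1))) = \vartheta_2(\theta_2(1))$, so $\gamma \circ \vartheta_1$ and $\vartheta_2$ agree on a generating set and therefore on all of $G/G^{(2)}$.
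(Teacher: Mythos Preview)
Your proof is correct and follows essentially the same route as the paper's: both arguments reduce (i) to solving $(t-1)b = a$ (equivalently $(1-t)h'=h$) in the integral Alexander module, invoking the standard fact that multiplication by $1-t$ is an automorphism of $\mathcal{A}(K)$, and both dispatch (ii) by noting that $\mathcal{A}(K)$ is abelian so conjugation by any $(0,b)$ fixes it pointwise. Your version is arguably a touch more complete, since you make explicit why agreement on $\theta_2(1)$ and on $\mathcal{A}(K)$ forces $\gamma\circ\vartheta_1=\vartheta_2$ globally (they agree on a generating set), whereas the paper leaves this implicit.
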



\begin{proof}
We claim that we can always arrange that $\theta_1(1) = \theta_2(1)$ up to an inner automorphism.  To see this it suffices to change $\vartheta_1(\theta_2(1))$ to $(1,0) \in \Z \ltimes \mathcal{A}(K)$ by applying an inner automorphism of $\Z \ltimes \mathcal{A}(K)$.  Let $h \in \mathcal{A}(K)$ be such that $\vartheta(\theta_2(1)) = (1,h)$.   By \cite[Proposition~1.2]{Le77}, multiplication by $1-t$ acts as an automorphism of $\mathcal{A}(K)$.
We can therefore find $h' \in H$ such that $(1-t)\cdot h' = h$.  Then we have:
\begin{align*}
(0,h')^{-1}(1,h)(0,h') &= (0,-h')(1,h)(0,h') = (1,-h' + h)(0,h')\\
&= (1,-h' + h + t\cdot h') = (1,h - (1-t)\cdot h') \\ & = (1,h-h) = (1,0).
\end{align*}
Let $\gamma$ be inner automorphism obtained by conjugating with $(0,h') \in \mathcal{A}(K)$.   Then $\gamma \circ \vartheta_1 = \vartheta_2$ as required for the first part of the lemma.  Since $(0,h')$ lies in $\mathcal{A}(K)$, it commutes with $g \in \mathcal{A}(K) \subset \Z \ltimes \mathcal{A}(K)$ for all such $g$.  The second part of the lemma follows.
\end{proof}

Since $\widetilde{P}$ is preserved, an inner automorphism as in Lemma~\ref{lemma:inner-automorphisms} descends to an action on $\mathbb{Z} \ltimes \mathcal{A}(K)/\widetilde{P}$, and acts by the identity on the subgroup $\mathcal{A}(K)/\widetilde{P} \leq \mathbb{Z} \ltimes \mathcal{A}(K)/\widetilde{P}$.

For a choice of splitting $\theta$, we obtain a map $\phi_P \colon G \to \mathbb{Z} \ltimes \mathcal{A}(K)/\widetilde{P}$.  This map determines a unique homotopy class of maps $\phi_P \colon M_K \rightarrow B(\mathbb{Z} \ltimes \mathcal{A}(K)/\widetilde{P})$.

\begin{definition}\label{definition:obstruction}
Let $P \subset \AK$ be a lagrangian and $\widetilde{P} = \ker(\mathcal{A}(K) \rightarrow \AK \rightarrow \AK/P)$. Then we define $\psi(K,P)$ to be \[{(\phi_P)}_*([M_K]) \in H_3(B(\mathbb{Z} \ltimes \mathcal{A}(K)/\widetilde{P});\mathbb{Z})/\Inn_{\mathcal{A}(K)/\widetilde{P}}(\Z \ltimes \mathcal{A}(K)/\widetilde{P}).\]
\end{definition}

\begin{remark}
Since the inner automorphisms act on third homology by an automorphism, they preserve the property of being zero and of being nonzero.  Moreover we will always consider elements of $H_3(B(\mathbb{Z} \ltimes \mathcal{A}(K)/\widetilde{P});\mathbb{Z})$ arising from the inclusion induced map $H_3(B(\mathcal{A}(K)/\widetilde{P});\mathbb{Z}) \to H_3(B(\mathbb{Z} \ltimes \mathcal{A}(K)/\widetilde{P});\mathbb{Z})$, and on such elements the inner automorphisms act by the identity by Lemma~\ref{lemma:inner-automorphisms}.  For these two reasons we will not consider the action in the sequel, but note that in general an invariant $\psi(K,P)$ that depends only on the knot~$K$ and a choice of lagrangian $P$ lives in the quotient of the third homology by the given automorphism action $H_3(B(\mathbb{Z} \ltimes \mathcal{A}(K)/\widetilde{P});\mathbb{Z})/\Inn_{\mathcal{A}(K)/\widetilde{P}}(\Z \ltimes \mathcal{A}(K)/\widetilde{P})$.
\end{remark}

In the case that $P=0$, the invariant $\psi(K,P)$ coincides with the invariant $\beta_1$ defined in \cite[Section~$10$]{Coc04}. The obstruction $\beta_1$ was used to show that there exist distinct knots with isometric Blanchfield forms (see \cite[Theorem $10.3$]{Coc04}).

Next we show that $\psi(K,P)$ gives an obstruction for a knot to be homology ribbon $(1)$-solvable.

\begin{theorem}\label{theorem:obstruction}
Suppose that $K$ is a homology ribbon $(1)$-solvable knot via a homology ribbon $(1)$-solution $W$, and let $P \subset \AK$ be the lagrangian associated to $W$. Then $\psi(K,P)=0$.
\end{theorem}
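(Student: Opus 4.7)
\medskip

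The plan is to show that the map $\phi_P \colon \pi_1(M_K) \to \Z \ltimes \mathcal{A}(K)/\widetilde{P}$ extends across the $(1)$-solution $W$, so that the class $(\phi_P)_\ast([M_K])$ dies once we push it into $W$. Since $W$ is a $4$-manifold with $\partial W = M_K$, the inclusion $i \colon M_K \hookrightarrow W$ sends $[M_K]$ to zero in $H_3(W;\Z)$ (it is the boundary of the relative fundamental class in $H_4(W, M_K;\Z)$). So any map $M_K \to B\Gamma$ that extends to $W$ automatically kills $[M_K]$ in $H_3(B\Gamma;\Z)$.

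First I would identify, using the fact that $H_1(M_K;\Z) \xrightarrow{\simeq} H_1(W;\Z) \cong \Z$ for an $(n)$-solution, the second stage $\pi_1(W)/\pi_1(W)^{(2)} \cong \Z \ltimes H_1(W;\Z[\Z])$, matching the identification $\pi_1(M_K)/\pi_1(M_K)^{(2)} \cong \Z \ltimes \mathcal{A}(K)$ by choosing the same meridian to split the abelianisation on both sides (by Lemma~\ref{lemma:inner-automorphisms} the resulting invariant is unaffected up to the ambiguity already built into Definition~\ref{definition:obstruction}). The inclusion then gives a commutative square
\[
\xymatrix{
\pi_1(M_K) \ar[r] \ar[d] & \Z \ltimes \mathcal{A}(K) \ar[d] \\
\pi_1(W) \ar[r] & \Z \ltimes H_1(W;\Z[\Z]),
}
\]
where the right vertical map is $\Id_\Z \ltimes j_\ast$ with $j_\ast \colon \mathcal{A}(K) \to H_1(W;\Z[\Z])$. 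By hypothesis $W$ is homology ribbon $(1)$-solvable, so $j_\ast$ is surjective; let $Q := \ker j_\ast$.

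The crucial step is to check that $Q \subseteq \widetilde{P}$, for then $j_\ast$ induces a surjection $H_1(W;\Z[\Z]) \twoheadrightarrow \mathcal{A}(K)/\widetilde{P}$, yielding the desired extension $\widetilde{\phi}_P \colon \pi_1(W) \to \Z \ltimes \mathcal{A}(K)/\widetilde{P}$. For this, recall that $P$ is defined to be $\ker(\AK \to \mathcal{A}^{\Q}(W))$, so tensoring the surjection $\mathcal{A}(K) \twoheadrightarrow H_1(W;\Z[\Z])$ with $\Q$ factors through $\AK \twoheadrightarrow \AK/P \hookrightarrow \mathcal{A}^{\Q}(W)$. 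Chasing around the resulting commutative square shows that every element of $Q$ maps to $0$ in $\mathcal{A}^{\Q}(W)$, hence to $0$ in $\AK/P$, i.e. lies in $\widetilde{P}$ by definition. This is the one place where the distinction between the integral and rational Alexander modules has to be handled carefully, and I expect it to be the main (if mild) obstacle.

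Given the extension of group homomorphisms, passing to classifying spaces produces a continuous map $\widetilde{\phi}_P \colon W \to B(\Z \ltimes \mathcal{A}(K)/\widetilde{P})$ whose restriction to $M_K$ is homotopic to $\phi_P$. Consequently
\[
(\phi_P)_\ast([M_K]) = (\widetilde{\phi}_P)_\ast \circ i_\ast([M_K]) = (\widetilde{\phi}_P)_\ast(0) = 0
\]
in $H_3(B(\Z \ltimes \mathcal{A}(K)/\widetilde{P});\Z)$, because $i_\ast([M_K]) = 0 \in H_3(W;\Z)$ as noted above. This gives $\psi(K,P) = 0$ and completes the argument.
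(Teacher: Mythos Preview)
Your proposal is correct and follows essentially the same route as the paper: extend $\phi_P$ over $W$ by checking that the kernel of $\mathcal{A}(K) \to H_1(W;\Z[\Z])$ lands in $\widetilde{P}$, then use that $[M_K]$ bounds in $W$. The only cosmetic difference is that the paper passes to $\mathcal{A}(W)/T$ (killing $\Z$-torsion) and proves the stronger statement $\ker(j) = \widetilde{P}$, obtaining an \emph{isomorphism} $\mathcal{A}(K)/\widetilde{P} \cong \mathcal{A}(W)/T$; you prove only the inclusion $Q \subseteq \widetilde{P}$, which already gives the needed surjection $H_1(W;\Z[\Z]) \twoheadrightarrow \mathcal{A}(K)/\widetilde{P}$ and is all that is required for the extension.
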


\begin{proof}
We have $\partial W  = M_K$ and the following commutative diagram
\[\xymatrix{
\mathcal{A}(K) \ar@{->>}[r]^-{j} \ar@{^{(}->}[d]^-{i_K}
&\mathcal{A}(W)/T \ar@{^{(}->}[d]^-{i_{W}}\\
\AK \ar@{->>}[r]^-{j^{\Q}} &\mathcal{A}^{\Q}(W)
}\]
where the vertical maps are injective and the horizontal maps are surjective.
Here $\mathcal{A}(W) := H_1(W;\Z[\Z])$ and $\mathcal{A}^{\Q}(W) := H_1(W;\Q[\Z])$ are the Alexander module of $W$ and the rational Alexander module of $W$ respectively, $T$ is the $\mathbb{Z}$-torsion submodule of $\mathcal{A}(W)$, the maps labelled $i$ are the natural inclusions into the corresponding modules tensored up with $\Q$, and $j$ and $j^{\Q}$ are the maps induced by the inclusion $M_K \to W$. Since $W$ is a homology ribbon $(1)$-solution, $j$ and $j^{\Q}$ are surjections. Since $P$ is the lagrangian associated to $W$, we have that $P=\ker(j^{\Q})$, and so $\mathcal{A}^{\Q}(W) = \mathcal{A}^{\Q}(K)/P$.

Note that a splitting $\theta \colon \Z \to \pi_1(M_K)/\pi_1(M_K)^{(2)}$, together with the fact that the inclusion induced map $\Z \cong H_1(M_K;\Z) \to H_1(W;\Z) \cong \Z$ is an isomorphism, determines a splitting $\Z \to \pi_1(W)/\pi_1(W)^{(2)}$.  Use this to obtain the identification $\pi_1(W) / \pi_1(W)^{(2)} \toiso \Z \ltimes \mathcal{A}(W)$, that we use  below.

It follows from the commutative diagram above that $\ker(j) \subseteq \widetilde{P}$.  Conversely, if $x \in \widetilde{P}$, then $j^{\Q} \circ i_K (x) = 0$, and hence $x$ lies in $\ker(j)$. Hence $\mathcal{A}(K)/\widetilde{P} \cong \mathcal{A}(W)/T$. Use the inverse of this isomorphism to obtain the following commutative diagram, extending $\phi_P \colon G \rightarrow \mathbb{Z} \ltimes \mathcal{A}(K)/\widetilde{P}$.
\[
\xymatrix{
\pi_1(W) \ar[r]
&\mathbb{Z} \ltimes \mathcal{A}(W)/T \ar[rd]^-{\cong} \\
G = \pi_1(M_K) \ar[r] \ar[u] & G/G^{(2)} \cong \mathbb{Z} \ltimes \mathcal{A}(K) \ar[r] \ar[u]_{(\Id,j)} & \mathbb{Z} \ltimes \mathcal{A}(K)/\widetilde{P}.
}\]
This determines a homotopy class of maps $W \rightarrow B(\mathbb{Z} \ltimes \mathcal{A}(K)/\widetilde{P})$, and the image of the relative fundamental class $[W, M_K]$ is a $4$-chain that exhibits the vanishing \[\psi(K,P)={(\phi_P)}_*([M_K])=0 \in H_3(B(\mathbb{Z} \ltimes \mathcal{A}(K)/\widetilde{P});\mathbb{Z}).\]
\end{proof}

\begin{remark}\label{remark:obstruction}
Note that it was crucial that the $(1)$-solution $W$ be a homology ribbon $(1)$-solution, since in the above proof we made use of the fact that $j \colon \mathcal{A}(K) \to \mathcal{A}(W)/T$ is a surjective map. We do not know whether the invariant $\psi(K,P)$ has to vanish if $P$ is the lagrangian associated to a slice disc but not to any homotopy ribbon disc, or even a $(1)$-solution that is not homology ribbon.

Part of our contribution in defining this invariant carefully is to go backwards and forwards between the rational and integral Alexander modules.  The lagrangians should be indexed rationally, since they are easier to control that way, but the invariant should be integral, otherwise it would be rarely nonvanishing, as we will see.
\end{remark}

Combine Theorem~\ref{theorem:obstruction} with Lemma~\ref{lemma:homotopyribbonsol}, Lemma~\ref{lemma:kim06} and then with Lemma~\ref{lemma:kim06half}, to obtain the following obstruction theorems, which imply Theorems~\ref{thm-intro-homotopy-ribbon} and \ref{thm-intro-doubly-slice} from the introduction.

\begin{theorem}\label{ribbon-obstruction-theorem}\label{theorem:doubly-slice-obstructionhalf}
Suppose that a knot $K$ lies in $\mathcal{F}_{0.5,1}$ $($for example if $K$ is homotopy ribbon$)$. Then there is a lagrangian $P$ for the rational Blanchfield form such that $\psi(K,P)=0$.
\end{theorem}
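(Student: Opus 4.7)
The plan is to combine the three ingredients already assembled in the preceding subsection: Lemma~\ref{lemma:homotopyribbonsol}, Lemma~\ref{lemma:kim06half}, and Theorem~\ref{theorem:obstruction}. The entire argument is an assembly of these, so there is essentially no new hard work — the conceptual labour was done when defining $\psi(K,P)$ carefully and proving vanishing on a homology ribbon $(1)$-solution.

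First I would reduce to the homology ribbon $(1)$-solvable case. If $K$ is homotopy ribbon, then Lemma~\ref{lemma:homotopyribbonsol} shows that $K$ is homotopy ribbon $(1)$-solvable (the exterior of a homotopy ribbon disc is itself a $(1)$-solution with the required surjectivity on fundamental groups, and surjectivity on $\pi_1$ forces surjectivity on $\Z[\Z]$-homology). More generally, if $K \in \mathcal{F}_{0.5,1}$, then Lemma~\ref{lemma:kim06half} produces a homology ribbon $(1)$-solution $W$ with $\partial W = M_K$; the $(0.5,1)$-pair is used only to arrange the Mayer--Vietoris vanishing that forces surjectivity of $H_1(M_K;\Z[\Z]) \to H_1(W;\Z[\Z])$.

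Next I would define the lagrangian. By Lemma~\ref{lemma-kernel-metaboliser-for-Bl-1-solutions}, the submodule
\[
P := \ker\bigl(H_1(M_K;\Q[\Z]) \to H_1(W;\Q[\Z])\bigr) \subset \AK
\]
is a lagrangian for the rational Blanchfield form of $K$ — this is the lagrangian associated to the homology ribbon $(1)$-solution $W$.

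Finally, applying Theorem~\ref{theorem:obstruction} directly to the pair $(K,W)$ with this choice of $P$ yields $\psi(K,P)=0$, which is exactly the conclusion claimed. The only step requiring any care is verifying that the $W$ produced by Lemma~\ref{lemma:kim06half} is genuinely a homology ribbon $(1)$-solution (not merely a $(1)$-solution with some extra property), so that the hypothesis of Theorem~\ref{theorem:obstruction} is satisfied; but this is precisely what Lemma~\ref{lemma:kim06half} gives, so no additional verification is needed. Hence no single step is a serious obstacle — the work is packaging.
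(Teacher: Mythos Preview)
Your proposal is correct and matches the paper's own argument essentially verbatim: the paper states this theorem as an immediate consequence of combining Theorem~\ref{theorem:obstruction} with Lemma~\ref{lemma:homotopyribbonsol} and Lemma~\ref{lemma:kim06half}, exactly as you outline.
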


\begin{theorem}\label{theorem:doubly-slice-obstruction}
Suppose that a knot $K$ lies in $\mathcal{F}_{1,1}$ $($for example if $K$ is doubly slice$)$.  Then there are lagrangians $P_1$ and $P_2$ for the rational Blanchfield form such that $P_1 \oplus P_2 = H_1(M_K;\Q[\Z])$ and $\psi(K,P_1) = \psi(K,P_2) = 0$.
\end{theorem}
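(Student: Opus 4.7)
The plan is to apply Lemma~\ref{lemma:kim06} together with Theorem~\ref{theorem:obstruction}. Suppose $K \in \mathcal{F}_{1,1}$. First I would invoke Lemma~\ref{lemma:kim06} to produce two homology ribbon $(1)$-solutions $W_1$ and $W_2$ with $\partial W_i = M_K$ such that the inclusion induced map
\[
H_1(M_K;\Z[\Z]) \xrightarrow{(i_1,i_2)} H_1(W_1;\Z[\Z]) \oplus H_1(W_2;\Z[\Z])
\]
is an isomorphism, and such that the kernels $P_i := \ker(H_1(M_K;\Q[\Z]) \to H_1(W_i;\Q[\Z]))$ are lagrangians for the rational Blanchfield form.

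Next, since each $W_i$ is a homology ribbon $(1)$-solution whose associated lagrangian is exactly $P_i$, Theorem~\ref{theorem:obstruction} applied twice yields $\psi(K,P_1)=0$ and $\psi(K,P_2)=0$ in the appropriate third group homology groups.

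It remains to check that $P_1 \oplus P_2 = H_1(M_K;\Q[\Z])$. Tensoring the isomorphism from Lemma~\ref{lemma:kim06} with $\Q$ gives an isomorphism
\[
f \colon H_1(M_K;\Q[\Z]) \xrightarrow{\cong} H_1(W_1;\Q[\Z]) \oplus H_1(W_2;\Q[\Z])
\]
induced by the two inclusions. Given $v \in H_1(M_K;\Q[\Z])$ with $f(v)=(a,b)$, set $v_1 := f^{-1}(0,b)$ and $v_2 := f^{-1}(a,0)$. Then $v_1 \in P_1$, $v_2 \in P_2$, and $v = v_1+v_2$. Injectivity of $f$ further shows that $P_1 \cap P_2 = \ker(f) = 0$, so the decomposition $P_1 \oplus P_2 = H_1(M_K;\Q[\Z])$ follows.

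There is no genuine obstacle here: the substantive work has already been packaged into Lemma~\ref{lemma:kim06} (which produces the pair of homology ribbon $(1)$-solutions whose associated rational lagrangians form a direct sum decomposition of the Alexander module) and Theorem~\ref{theorem:obstruction} (the vanishing of $\psi$ on a homology ribbon $(1)$-solvable knot). The only minor point to verify explicitly is the straightforward linear algebra giving $P_1 \oplus P_2 = H_1(M_K;\Q[\Z])$ from the fact that $(i_1,i_2)$ is a $\Q$-isomorphism after tensoring.
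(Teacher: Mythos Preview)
Your proof is correct and follows essentially the same approach as the paper, which simply states that the theorem follows by combining Theorem~\ref{theorem:obstruction} with Lemma~\ref{lemma:kim06}. You have additionally spelled out the straightforward linear algebra verifying $P_1 \oplus P_2 = H_1(M_K;\Q[\Z])$ from the rationalised isomorphism, which the paper leaves implicit.
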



\subsection{Computation of the invariant \texorpdfstring{$\psi(K,P)$}{psi(K,P)}}

Next we develop techniques to compute $\psi(K,P)$ in examples.
Proposition~\ref{proposition:generators}
 and the map $f_J \colon M_J \rightarrow B(\mathcal{A}(K)/\widetilde{P})$ that was defined at the end of Section~\ref{Derivatives of Knots}, combined with the canonical injection $i \colon \mathcal{A}(K)/\widetilde{P} \to \mathbb{Z} \ltimes \mathcal{A}(K)/\widetilde{P}$, appear in the following proposition.

\begin{proposition}\label{prop:psi-K-equals-H-3-of-derivative}
Suppose that $\Sigma$ is a genus $g$ Seifert surface for $K$ and that $P \subset \AK$ is a lagrangian with respect to $\Bl$. If $H$ is a metaboliser representing~$P$ and~$J$ is a derivative of~$K$ associated with~$H$, then
$$\psi(K,P) = i_*({(f_J)}_*([M_J])) \in H_3(B(\mathbb{Z} \ltimes \mathcal{A}(K)/\widetilde{P});\mathbb{Z}),$$
where $i \colon B(\mathcal{A}(K)/\widetilde{P}) \rightarrow B(\mathbb{Z} \ltimes \mathcal{A}(K)/\widetilde{P}).$
\end{proposition}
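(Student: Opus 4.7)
The plan is to construct a 4-dimensional cobordism $E$ from $M_K$ whose other boundary component realises $M_J$ up to a connected sum with $S^1 \times S^2$, and to produce an extension $\Phi \colon E \to B(\Z \ltimes \mathcal{A}(K)/\widetilde{P})$ of $\phi_P$ whose restriction to the $M_J$ summand is exactly $i \circ f_J$. Naturality of the fundamental class under $\Phi$ then delivers the desired equality in $H_3(B(\Z \ltimes \mathcal{A}(K)/\widetilde{P});\Z)$.

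For the construction, I would set $E := (M_K \times [0,1]) \cup (\text{$g$ two-handles})$, where the two-handles are attached to $M_K \times \{1\}$ along $J_1, \dots, J_g \subset S^3 \subset M_K$ using the framings induced by $\Sigma$. Since $H$ is a metaboliser of the Seifert form and $[J_i] \in H$, we have $\beta_\Sigma(J_i,J_i)=0$, so this surface framing agrees with the $0$-framing of $J_i$ as a knot in $S^3$, and also with the $0$-framing of $J_i$ in $M_K$ (via the Seifert surface $\widehat{\Sigma}$ closed up in $M_K$). Consequently the upper boundary of $E$ is $S^3_{\mathbf{0}}(K \cup J)$. The classes $[J_i]$ form half of a symplectic basis for the intersection form on $\widehat{\Sigma}$, so surgery compresses $\widehat{\Sigma}$ into an embedded $2$-sphere; across this sphere $K$ bounds a disc, exhibiting $K$ as a null-homotopic unknot in $M_J = S^3_0(J)$. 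Hence the upper boundary of $E$ is diffeomorphic to $M_J \# (S^1 \times S^2)$.

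Next I would extend $\phi_P$ across $E$. It suffices to verify $\phi_P([J_i]) = 0$ for each $i$, so that the map extends across each two-handle. Since $\lk(K,J_i)=0$, the class $[J_i]$ lies in $G^{(1)}$ and therefore represents an element of $\mathcal{A}(K)$. Under the fixed lift of $\Sigma$ to the infinite cyclic cover, $[J_i]$ coincides with $b_i \in H_1(\Sigma;\Z)$, and Proposition~\ref{proposition:generators}(\ref{item:prop-generators-1}) forces the image of $b_i$ in $\AK$ to lie in $P$. Thus $[J_i] \in \widetilde{P}$, and since $B(\Z \ltimes \mathcal{A}(K)/\widetilde{P})$ is aspherical we obtain a well-defined extension $\Phi \colon E \to B(\Z \ltimes \mathcal{A}(K)/\widetilde{P})$.

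Finally, I would identify $\Phi$ on the upper boundary. On the $S^1 \times S^2$ summand, any map into the aspherical target factors through $B\Z \simeq S^1$, so the pushforward of its fundamental class vanishes in $H_3$. On the $M_J$ summand, each meridian $\mu_i \in \pi_1(M_J)$ can be pushed through the collar into $M_K \times \{1\}$, where it is homotopic to the band meridian $\beta_i$ of Figure~\ref{figure:diskband}. Since $\lk(\beta_i,K)=0$, the value $\phi_P(\beta_i)$ lies in $\{0\} \times \mathcal{A}(K)/\widetilde{P}$ and coincides by construction of $f_J$ with $i(f_J(\mu_i))$. As both $\Phi|_{M_J}$ and $i \circ f_J$ factor through $H_1(M_J;\Z)$, they agree on all of $\pi_1(M_J)$. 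Naturality applied to the relative fundamental class of $(E,\partial E)$ then yields
\[ (\phi_P)_*[M_K] = \Phi_*[M_J \# (S^1 \times S^2)] = i_*(f_J)_*[M_J] \]
in $H_3(B(\Z \ltimes \mathcal{A}(K)/\widetilde{P});\Z)$. The step that will require the most care is the recognition of the upper boundary of $E$ as $M_J \# (S^1 \times S^2)$ together with the verification that meridians transport correctly across the cobordism so that $\Phi|_{M_J}$ recovers precisely $i \circ f_J$.
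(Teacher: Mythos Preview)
Your proposal is correct and follows the same broad strategy as the paper: build a cobordism out of $M_K$ by attaching $2$-handles along the $J_i$, extend $\phi_P$ over it using $[J_i] \in \widetilde{P}$, and identify the restriction at the other end with $i \circ f_J$ via the isotopy between band meridians $\beta_i$ and link meridians $\mu_i$.

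The one substantive difference is that the paper, following Cochran--Harvey--Leidy, goes one step further: after the $2$-handle attachments it also attaches a $3$-handle along precisely the $2$-sphere you identified (the compressed $\widehat{\Sigma}$ sitting in $\partial^+ C$), so that the upper boundary becomes $M_J$ on the nose rather than $M_J \# (S^1 \times S^2)$. The paper then cites \cite[Proposition~8.1]{CHL10} for the two facts it needs about this ``fundamental cobordism'' $E$: surjectivity of $\pi_1(M_K) \to \pi_1(E)$ with kernel normally generated by the $J_i$, and the isotopy of $\beta_i$ with $\mu_i$ inside $E$. This makes the endgame marginally cleaner, since $\psi(K,P) = i_*(f_J)_*[M_J]$ is then read off directly with no extra summand to dispose of. Your route is equally valid; the additive splitting $\Phi_*[M_J \# (S^1 \times S^2)] = (i\circ f_J)_*[M_J] + \Phi_*[S^1 \times S^2]$ that you use implicitly does hold, because the target is aspherical and hence the separating $S^2$ bounds a $3$-ball there, and the $S^1 \times S^2$ term vanishes since its circle carries the meridian of $K$ and so the map factors through $B\Z$. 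You might make that step explicit in a final write-up.
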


\begin{proof}
A cobordism $E$ between $M_K$ and $M_J$ was constructed in \cite{CHL10}. Here is a description of the construction of $E$, for the convenience of the reader.  Let $C$ denote the $4$-manifold obtained from $M_K \times I$ by adding $2$ handles along $J = J_1\cup \dots \cup J_g$, with zero-framing with respect to $S^3$. This is a cobordism from $M_K$ to a $3$-manifold $\partial^+ C$.  Since the components of $J$ are pairwise disjoint and form half of a symplectic basis for $H_1(\Sigma;\Z)$, the Seifert surface $\Sigma$ can be surgered to a disc inside $\partial^{+}C$. Take the union of this disc with the surgery disc in $M_K$ bounded by a zero-framed longitude of $K$, to obtain an embedded $2$-sphere $S$ in $\partial^{+}C$. Then define $E$ to be the $4$-manifold obtained from $C$ by attaching a $3$-handle along $S$. Here are some important properties of $E$, which we will call the \emph{fundamental cobordism}.

\begin{lemma}\cite[Proposition $8.1$]{CHL10} \label{lemma:properties-of-E}
The fundamental cobordism $E$ constructed above has the following properties.
\begin{enumerate}[(i)]
{\setlength\itemindent{15pt}\item\label{item:property-E-1} The map $i_* \colon \pi_1(M_K) \rightarrow \pi_1(E)$ is surjective, with the kernel the normal closure of the set of loops represented by the components of $J$.}
{\setlength\itemindent{15pt}\item\label{item:property-E-2} The meridian of the band on which $J_i \hookrightarrow \Sigma \hookrightarrow M_K =\partial^-{E}$ lies is isotopic in~$E$ to a meridian of $J_i$ in $M_J=\partial^+{E}$.}
\end{enumerate}
\end{lemma}

Now we continue with the proof of Proposition~\ref{prop:psi-K-equals-H-3-of-derivative}.
Since $\{ [J_1], \dots, [J_g] \}$ in $\AK$ spans~$P$ by Proposition~\ref{proposition:generators}~(\ref{item:prop-generators-1}), the normal closure in $\pi_1(M_K)$ of the set of loops represented by the components of $J$ is contained in the kernel of $\phi_P$. Then, by Lemma~\ref{lemma:properties-of-E}~(\ref{item:property-E-1}), we can extend $\phi_P$ uniquely over $\pi_1(E)$; denote the extension of $\phi_P$ by $\phi_E \colon \pi_1(E) \to \Z \ltimes \mathcal{A}(K)/\wt{P}$. Then $\phi_E$ induces a map
$\phi_{J} \colon \pi_1(M_J) \rightarrow \mathbb{Z} \ltimes \mathcal{A}(K)/\widetilde{P}.$
We need to show that $\phi_{J}$ agrees with
$$i \circ f_{J} \colon \pi_1(M_J) \xrightarrow{} \mathcal{A}(K)/\widetilde{P} \xrightarrow{} \mathbb{Z} \ltimes \mathcal{A}(K)/\widetilde{P},$$
where $i$ is the canonical injection and $f_J$ is the map on fundamental groups determined by the map $f_J \colon M_J \rightarrow B(\mathcal{A}(K)/\widetilde{P})$ defined at the end of the Section~\ref{Derivatives of Knots}.
By Lemma~\ref{lemma:properties-of-E}~(\ref{item:property-E-2}), for $i =1, \dots, g$ the homomorphism $\phi_J\colon \pi_1(M_J) \rightarrow \mathbb{Z} \ltimes \mathcal{A}(K)/\widetilde{P}$ sends a meridian of $J_i$ to the image of $\beta_i \in H_1(S^3 \sm \nu\Sigma;\mathbb{Z})$ under the map $H_1(S^3\sm \nu \Sigma) \rightarrow \mathcal{A}(K) \rightarrow \mathcal{A}(K)/\widetilde{P}$ (see end of the Section~\ref{Derivatives of Knots} for the definition of the $\beta_i$). Furthermore, since the images of $i \circ f_J$ and $\phi_{J}$ in $\Z \ltimes \mathcal{A}(K)/\widetilde{P}$ are abelian subgroups, they are determined by the images of the meridians of~$J$. By the definition of $f_J$, this map sends a meridian of $J_i$ to the image of $\beta_i$, for $i = 1, \dots, g$, hence $\phi_{J}$ agrees with $i \circ f_{J}$.
\end{proof}

There is a exact sequence of groups
\[0 \rightarrow \mathcal{A}(K)/\widetilde{P} \rightarrow \mathbb{Z} \ltimes \mathcal{A}(K)/\widetilde{P} \rightarrow \mathbb{Z} \rightarrow 0,\]
 that induces a fibration $B(\mathcal{A}(K)/\widetilde{P}) \rightarrow B(\mathbb{Z} \ltimes \mathcal{A}(K)/\widetilde{P}) \rightarrow S^1$. Hence we can consider the Wang exact sequence~\cite{Wan49,Mil68}:
$$\dots \rightarrow H_3(\mathcal{A}(K)/\widetilde{P}) \xrightarrow{t_*-\id} H_3(\mathcal{A}(K)/\widetilde{P}) \xrightarrow{i_*} H_3(\mathbb{Z} \ltimes \mathcal{A}(K)/\widetilde{P}) \rightarrow H_2(\mathcal{A}(K)/\widetilde{P}) \rightarrow \dots$$
where $i_*$ is induced from the inclusion and $t_*$ is induced from the action of the generator $t$ of $\mathbb{Z}$ on $B(\mathcal{A}(K)/\widetilde{P})$. By Proposition~\ref{prop:psi-K-equals-H-3-of-derivative}, $\psi(K,P)$ is the image of $[M_J] \in H_3(M_J;\Z)$ under $i_* \circ {(f_J)}_* \colon H_3(M_J;\Z) \to H_3(\Z \ltimes \mathcal{A}(K)/\wt{P})$. Furthermore, note that image of $f_J \colon \pi_1(M_J) \rightarrow \mathcal{A}(K)/\widetilde{P}$ lies in the potentially smaller subgroup
  $\mathcal{B}=\langle\beta_1, \dots, \beta_g\rangle$, where we consider $\beta_i$ as element of $\mathcal{A}(K)$ for $i \in \{1, \dots, g\}$. Hence we have actually shown that $\psi(K,P)$ is the image of $[M_J] \in H_3(M_J;\Z)$ under $i_* \circ j_* \circ (\bar{f_J})_*$, where $j_*$ and $\bar{f}_{J*}$ are from the following diagram, with $j_* \circ ({\bar{f}}_J)_* = (f_J)_*$.
\[\xymatrix{
 H_3(M_J)\ar[r]^-{({\bar{f}}_{J})_*} \ar[dr]^{(f_J)_*}   & H_3(\mathcal{B}) \ar[d]^{j_*} & \\
H_3(\mathcal{A}(K)/\widetilde{P}) \ar[r]_{t_*-\id} & H_3(\mathcal{A}(K)/\widetilde{P}) \ar[r]_-{i_*} & H_3(\mathbb{Z} \ltimes \mathcal{A}(K)/\widetilde{P}).
}\]
In particular, note that $\psi(K,P) =0$ if $(f_J)_*([M_J]) \in \im(t_*-\id)$.

\section{The relationship between \texorpdfstring{$\psi(K,P)$}{psi(K,P)} and triple linking numbers}\label{Relationship}

As above, let $K$ be a knot with a genus $g$ Seifert surface $\Sigma$, let $P \subset H_1(M_K;\Q[t,t^{-1}])=\AK$ be a lagrangian of the rational Blanchfield form, and let $J$ be a derivative of $K$ that determines a basis for a metaboliser of the Seifert form representing $P$.
Recall that by definition $\mathcal{B} = \langle\beta_1, \dots, \beta_g\rangle$ is the image of the map $f_J \colon \pi_1(M_J) \rightarrow \mathcal{A}(K)/\widetilde{P}$ defined at the end of the Section~\ref{Derivatives of Knots}.


In this section we will relate $\psi(K,P)$ to Milnor's triple linking number of the link $J$. First we prove that $\mathcal{B}$ is a finitely generated torsion-free abelian subgroup of $\mathcal{A}(K)/\widetilde{P}$.

\begin{proposition}\label{proposition:B}
Let $g$ be the genus of the Seifert surface $\Sigma$. Then $\mathcal{A}(K)/\widetilde{P}$ is a $\mathbb{Z}$-torsion free abelian group and $\mathcal{B}=\langle\beta_1, \ldots, \beta_g\rangle$ is a free abelian subgroup of $\mathcal{A}(K)/\widetilde{P}$. Moreover, if $\deg \Delta_K(t) = 2g$, then $\mathcal{B}$ has rank $g$.
\end{proposition}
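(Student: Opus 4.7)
The plan is to prove the three claims in sequence, exploiting the fact that everything sits inside a rational vector space. The only subtle input I will need is the standard fact that the integral Alexander module $\mathcal{A}(K)$ is $\Z$-torsion free (so that the map $\mathcal{A}(K) \hookrightarrow \mathcal{A}^{\Q}(K)$ is injective, as already used when defining $\widetilde{P}$ in Section~\ref{Ribbon Obstruction}).

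First, I would establish that $\mathcal{A}(K)/\widetilde{P}$ is $\Z$-torsion free. By the very definition $\widetilde{P} = \ker\bigl(\mathcal{A}(K) \hookrightarrow \mathcal{A}^{\Q}(K) \to \mathcal{A}^{\Q}(K)/P\bigr)$, so the induced map
\[
\mathcal{A}(K)/\widetilde{P} \hookrightarrow \mathcal{A}^{\Q}(K)/P
\]
is injective. The target is a $\Q$-vector space, hence torsion free as an abelian group, and therefore so is the source.

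Second, observe that $\mathcal{B} = \langle \beta_1,\dots,\beta_g\rangle$ is, by construction, a finitely generated abelian subgroup of $\mathcal{A}(K)/\widetilde{P}$. Since the ambient group is torsion free by the previous step, $\mathcal{B}$ is a finitely generated torsion-free abelian group, hence free abelian of finite rank.

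Finally, for the rank computation, I would use Proposition~\ref{proposition:generators}~(\ref{item:prop-generators-2}): the images of $\beta_1,\ldots,\beta_g$ in $\mathcal{A}^{\Q}(K)/P$ span $\mathcal{A}^{\Q}(K)/P$ as a $\Q$-vector space. When $\deg \Delta_K(t) = 2g$, we have $\dim_{\Q} \mathcal{A}^{\Q}(K) = 2g$ and $\dim_{\Q} P = g$ (since $P$ is a lagrangian), so $\dim_{\Q} \mathcal{A}^{\Q}(K)/P = g$. A spanning set of $g$ elements in a $g$-dimensional $\Q$-vector space is a basis, so the $\beta_i$ are $\Q$-linearly independent in $\mathcal{A}^{\Q}(K)/P$. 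Pulling back through the injection $\mathcal{A}(K)/\widetilde{P} \hookrightarrow \mathcal{A}^{\Q}(K)/P$, any $\Z$-linear relation among the $\beta_i$ in $\mathcal{A}(K)/\widetilde{P}$ would become a nontrivial $\Q$-relation in $\mathcal{A}^{\Q}(K)/P$, a contradiction. Hence $\mathcal{B}$ has rank $g$. The only nontrivial point is locating the torsion-freeness inside the rational object, but this comes for free from how $\widetilde{P}$ was defined, so I do not expect any serious obstacle.
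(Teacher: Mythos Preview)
Your proof is correct and follows essentially the same approach as the paper. The paper's argument for torsion-freeness is an explicit element chase (if $n\cdot x \in \widetilde{P}$ then $\iota(x) \in P$ since $P$ is a $\Q$-vector space), which is just an unpacked version of your observation that $\mathcal{A}(K)/\widetilde{P}$ injects into the $\Q$-vector space $\mathcal{A}^{\Q}(K)/P$; the rank argument is likewise the same, with the paper phrasing it as ``rank at least $g$ and generated by $g$ elements'' rather than your equivalent ``$g$ spanning elements in a $g$-dimensional space are independent.''
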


\begin{proof}
First, we show that $\mathcal{A}(K)/\widetilde{P}$ is a $\mathbb{Z}$-torsion free abelian group. This is an algebraic fact.  Suppose $x\in \mathcal{A}(K)$ and $n\cdot x \in \widetilde{P}$ for some positive integer $n$, then $\iota(n\cdot x) \in P$ where $\iota \colon \mathcal{A}(K) \hookrightarrow \mathbb{Q} \otimes \mathcal{A}(K)$. Since $P$ is a $\mathbb{Q}$-vector space, we see that $\iota(x) \in P$. This implies that $x\in \widetilde{P}$, hence  $\mathcal{A}(K)/\widetilde{P}$ is $\mathbb{Z}$-torsion free.
 Then the finitely generated abelian subgroup $\mathcal{B}$ is also torsion free, hence free.

Furthermore, $\AK$ has rank $2g$ as $\mathbb{Q}$-vector space, and the lagrangian $P$ has rank $g$ as $\mathbb{Q}$-vector space. Then by Proposition~\ref{proposition:generators}~(\ref{item:prop-generators-2}), the rank of $\mathcal{B}$ is at least $g$.  Since $\mathcal{B}$ is generated by $g$ elements, we conclude that $\mathcal{B}$ has rank $g$.
\end{proof}

Recall that, for a 3-component link $L = L_1\cup L_2 \cup L_3$ with zero pairwise linking numbers, Milnor's triple linking number $\bar{\mu}_L(123) \in \Z$ is defined~\cite{Mi54,Mi57}. Also, recall that $\bar{\mu}_L(123)$ can be calculated as signed count of triple intersection points of Seifert surfaces~\cite[Section~5]{Co85}.

Consider a map $f_L \colon M_L \rightarrow S^1 \times S^1 \times S^1$ such that the induced map on first homology followed by the canonical isomorphism $H_1((S^1)^3;\Z) \cong \Z^3$ sends the first meridian to $(1,0,0)$, the second meridian to $(0,1,0)$ and the third meridian to $(0,0,1)$. For $i=1,2,3$, let $f_i \colon M_L \rightarrow S^1$ be the map obtained from $f_L$ by projecting onto $i$th factor. We claim that for $i=1,2,3$, we can alter $f_i$ by a homotopy so that ${f_i}^{-1}(1)$ is a capped off Seifert surface for the $i$th component of $L$.
To see this, we argue as follows.  Given a capped-off Seifert surface $F_i$ for $L_i$, the Pontryagin-Thom construction gives rise to a map $\wt{f}_i\colon M_L \to S^1$ with $\wt{f}_i^{-1}(\{1\}) = F_i$.  Homotopy classes of maps to $S^1$ correspond to elements of $H^1(M_L;\Z)$.  Since the cohomology classes of $f_i$ and $\wt{f}_i$ are equal, they are homotopic maps.
So indeed $f_i$ is homotopic to a map such that $1 \in S^1$ is a regular point and the inverse image of $1$ is a capped-off Seifert surface for $L_i$.

After the alterations, $f_1 \times f_2 \times f_3 \colon M_L \to (S^1)^3$ is still homotopic to $f$ and further the signed count of $(f_1 \times f_2 \times f_3)^{-1}(\{1\} \times \{1\} \times \{1\})$ coincides with signed count of the number of triple intersection points of Seifert surfaces. Finally, since the signed count of $(f_1 \times f_2 \times f_3)^{-1}(\{1\} \times \{1\} \times \{1\})$ is also equal to the degree of $f_1 \times f_2 \times f_3$, we can conclude that $f_*([M_L]) = \bar{\mu}_L(123) \in H_3(S^1 \times S^1 \times S^1) = \mathbb{Z}$. Using this observation we will be able to relate $\psi(K,P)$ with Milnor's triple linking number of $J$.

We restrict our attention to the case that $\deg \Delta_K(t) = 2g$ where $g$ is the genus of the Seifert surface.
 From $f_J \colon \pi_1(M_J) \rightarrow \mathcal{A}(K)/\widetilde{P}$, we obtain $\bar{f}_J \colon \pi_1(M_J) \rightarrow \mathcal{B} = \mathbb{Z}^g$ and $j \colon \mathcal{B} = \mathbb{Z}^g \rightarrow \mathcal{A}(K)/\widetilde{P}$ where $j$ is an inclusion and $f_J = j \circ \bar{f}_J$. Then $\bar{f}_J$ induces a map $\bar{f}_J \colon M_J \rightarrow \prod^{g} S^1$ which sends the $i$-th meridian to the class $e_i = (0,\dots,1,\dots,0)$ of the first homology, for $i\in \{1,\dots,g\}$ as above. Let $\{e_i \times e_j \times e_k \mid 1 \leq i < j < k \leq g \}$ be a basis for $H_3(\prod^{g}S^1)$, where $\times$ is the homology product. We have shown the following proposition.

\begin{proposition}\label{proposition:milnor-and-obstruction}
Suppose that $\deg\Delta_K(t) = 2g$, where $g$ is the genus of a Seifert surface $\Sigma$ for $K$, and let $J = J_1\cup \dots \cup J_g$ be a derivative of $K$ on $\Sigma$. Then $\bar{f}_{J*}([M_J]) \in H_3(\mathcal{B}) \cong H_3(\mathbb{Z}^g) \cong H_3(\prod^{g} S^1)$ has coordinates $\{\bar{\mu}_{J}(ijk)\}$ with respect to the basis $\{e_i \times e_j \times e_k \mid 1 \leq i < j < k \leq g \}$.
\end{proposition}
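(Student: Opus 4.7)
The plan is to identify the coefficients of $\bar{f}_{J*}([M_J])$ one at a time by projecting onto triples of factors. Since $\mathcal{B} \cong \Z^g$ by Proposition~\ref{proposition:B}, we identify $B\mathcal{B}$ with $\prod^g S^1$ and use the Künneth decomposition
\[H_3\Big(\prod^g S^1;\Z\Big) \;\cong\; \bigoplus_{1\le i<j<k\le g} \Z\cdot (e_i\times e_j\times e_k).\]
For each triple $1\le i<j<k\le g$, let $\pi_{ijk}\colon \prod^g S^1\to (S^1)^3$ be the projection onto the factors indexed by $i,j,k$. On $H_1$, this sends $e_l$ to the $l$-th standard basis vector of $\Z^3$ if $l\in\{i,j,k\}$, and to $0$ otherwise; in particular, on $H_3$ it carries $e_i\times e_j\times e_k$ to the canonical generator of $H_3((S^1)^3)\cong\Z$ and annihilates every other basis element, since any such element contains at least one factor mapping to $0$ in $H_1(S^1)$. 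Consequently, the coefficient of $e_i\times e_j\times e_k$ in $\bar{f}_{J*}([M_J])$ is precisely $(\pi_{ijk}\circ \bar{f}_J)_*([M_J])\in\Z$.

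Next I would compute this composed degree via the Pontryagin–Thom argument that is already spelled out in the paragraphs preceding the statement. The composition $\pi_{ijk}\circ \bar{f}_J\colon M_J\to (S^1)^3$ sends the meridian $\mu_l$ of $J_l$ to the appropriate standard generator when $l\in\{i,j,k\}$ and to $0$ otherwise. Write $f_l$ for the $l$-th coordinate of this map, for $l\in\{i,j,k\}$. Since all pairwise linking numbers of $J$ vanish (as $J$ represents a metaboliser for the Seifert form), the cohomology class of $f_l$ in $H^1(M_J;\Z)$ is Poincaré dual to a closed oriented surface $\hat{F}_l\subset M_J$ obtained by starting with a Seifert surface $F_l$ for $J_l$ in $S^3$, tubing away the algebraically trivial intersections with the other components $J_m$ (using meridional tubes), and then capping off $\partial F_l$ with the core of the surgery $2$-handle attached along $J_l$. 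After homotoping $f_l$ so that $\hat{F}_l=f_l^{-1}(1)$ and arranging transversality, the degree of $\pi_{ijk}\circ\bar{f}_J$ equals the signed count of triple intersection points $\hat{F}_i\cap\hat{F}_j\cap\hat{F}_k$ in $M_J$.

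Finally, by Cochran's formula \cite[Section~5]{Co85}, this signed count of triple intersection points equals $\bar{\mu}_J(ijk)$. Combining the two steps,
\[(\pi_{ijk}\circ\bar{f}_J)_*([M_J]) \;=\; \bar{\mu}_J(ijk),\]
which is precisely the claimed coefficient.

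The only step requiring genuine care is the verification that the construction of $\hat{F}_l$ can be carried out inside $M_J$ rather than only inside the zero-surgery on the sublink $J_i\cup J_j\cup J_k$, and that the resulting triple intersection number is still equal to the Milnor invariant $\bar{\mu}_J(ijk)$ of the sublink. This rests on the vanishing of \emph{all} pairwise linking numbers of $J$: it both permits the tubing that removes stray intersections of each $F_l$ with the other components before the extra surgeries are performed, and ensures that the triple intersection count is unchanged by the tubing modifications (each tube contributes pairs of intersection points with canceling signs to any third surface). Once this is verified, the proof reduces cleanly to the three-component case established just before the statement.
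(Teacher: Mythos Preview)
Your argument is correct and follows the same approach as the paper: the paper's proof is precisely the discussion in the paragraphs immediately preceding the proposition (ending with ``We have shown the following proposition''), which carries out the Pontryagin--Thom argument for a three-component link and then asserts the general statement. Your projection via $\pi_{ijk}$ makes explicit the passage from three components to $g$ components that the paper leaves implicit, and your final paragraph correctly isolates the one point requiring care---that the capped-off Seifert surfaces for $J_i,J_j,J_k$ can be built inside $M_J$ (not just inside zero-surgery on the sublink) because all pairwise linking numbers of $J$ vanish.
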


\noindent Combining Proposition~\ref{prop:psi-K-equals-H-3-of-derivative} and Proposition~\ref{proposition:milnor-and-obstruction} gives rise to the following theorem.

\begin{theorem}\label{theorem:milnor-and-obstruction} Suppose that $\deg\Delta_K(t) = 2g$, where $g$ is the genus of a Seifert surface $\Sigma$ for $K$ and suppose $J = J_1\cup \dots \cup J_g$ is a derivative on $\Sigma$ associated with a metaboliser $H$. Then the invariant $\psi(K,P)$ is the image of $\sum_{i<j<k}\, \bar{\mu}_{J}(ijk)[e_i \times e_j \times e_k]$, under the dashed map in the diagram below, where $P$ is the lagrangian represented by $H$.

\[
\xymatrix{
& H_3(\mathcal{B})\ar[d]^-{j_*} \ar @{-->}[rd]
& \\
H_3(\mathcal{A}(K)/\widetilde{P}) \ar[r]^{t_*-\Id} & H_3(\mathcal{A}(K)/\widetilde{P}) \ar[r]^{i_*} & H_3(\mathbb{Z} \ltimes \mathcal{A}(K)/\widetilde{P}).
}\]
\end{theorem}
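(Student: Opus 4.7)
The plan is straightforward: this theorem is essentially the assembly of Propositions~\ref{prop:psi-K-equals-H-3-of-derivative} and \ref{proposition:milnor-and-obstruction}, which have already carried out the substantive geometric work. The proof should amount to chasing the diagram that the author has drawn.

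First, I would invoke Proposition~\ref{prop:psi-K-equals-H-3-of-derivative} to write $\psi(K,P) = i_*\big((f_J)_*([M_J])\big)$, where $f_J \colon M_J \to B(\mathcal{A}(K)/\widetilde{P})$ is the classifying map constructed at the end of Section~\ref{Derivatives of Knots}. Since the image of $f_J$ on fundamental groups lands in the subgroup $\mathcal{B} = \langle \beta_1,\ldots,\beta_g\rangle$, one has the factorisation $f_J = j \circ \bar{f}_J$ already recorded before Theorem~\ref{theorem:milnor-and-obstruction}. By functoriality of $B(-)$ and of induced maps on $H_3$, this yields $(f_J)_* = j_* \circ (\bar{f}_J)_*$ at the level of third group homology.

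Next, I would apply Proposition~\ref{proposition:milnor-and-obstruction} to evaluate
\[
(\bar{f}_J)_*([M_J]) \;=\; \sum_{i<j<k} \bar{\mu}_J(ijk)\,[e_i \times e_j \times e_k] \;\in\; H_3(\mathcal{B}).
\]
Substituting this into the expression from Proposition~\ref{prop:psi-K-equals-H-3-of-derivative} gives
\[
\psi(K,P) \;=\; (i_* \circ j_*)\!\left( \sum_{i<j<k} \bar{\mu}_J(ijk)\,[e_i \times e_j \times e_k]\right),
\]
which is, by construction, the image under the dashed arrow $H_3(\mathcal{B}) \dashrightarrow H_3(\mathbb{Z} \ltimes \mathcal{A}(K)/\widetilde{P})$ in the displayed diagram.

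There is no real obstacle here: every ingredient has been proved. The only subtlety worth highlighting in the write-up is the role of the hypothesis $\deg \Delta_K(t) = 2g$, which is needed so that Proposition~\ref{proposition:B} gives $\mathcal{B} \cong \mathbb{Z}^g$; this identification is what allows us to identify $H_3(\mathcal{B})$ with $H_3(\prod^g S^1)$ and thereby to give meaning to the Milnor-number formula invoked from Proposition~\ref{proposition:milnor-and-obstruction}. It is also worth remarking explicitly, for the reader's benefit, that the factorisation of the dashed arrow through $H_3(\mathcal{A}(K)/\widetilde{P})$ (rather than through its image modulo $t_*-\id$) is what is relevant here, and that this reproves the vanishing criterion $(\bar f_J)_*([M_J]) \in \ker(i_* \circ j_*)$ mentioned in the discussion preceding the theorem.
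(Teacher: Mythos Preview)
Your proposal is correct and follows exactly the paper's approach: the paper states the theorem as an immediate consequence of combining Proposition~\ref{prop:psi-K-equals-H-3-of-derivative} and Proposition~\ref{proposition:milnor-and-obstruction}, and your diagram chase spells out precisely how that combination works. The remarks you add about the role of the hypothesis $\deg\Delta_K(t)=2g$ and the factorisation through $H_3(\mathcal{A}(K)/\widetilde{P})$ are accurate and consistent with the surrounding discussion in the paper.
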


More generally, we have a sufficient condition for $\psi(K,P)$ to vanish. In Section~\ref{section:possiblemilnor}, we will present an equivalent condition for $\psi(K,P)$ to vanish for some special cases.
%
%
%
\begin{theorem}\label{thm:vanishesiftriple}
Suppose that $K$ has a derivative $J$ associated with a metabolizer $H$ such that all triple linking numbers of $J$ vanish. Then $\psi(K,P)$ vanishes, where $P$ is the lagrangian represented by $H$.
\end{theorem}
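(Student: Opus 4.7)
The plan is to combine Proposition~\ref{prop:psi-K-equals-H-3-of-derivative} with the Pontryagin--Thom style calculation of Milnor's triple linking numbers that was carried out in the discussion preceding Proposition~\ref{proposition:milnor-and-obstruction}. The only twist relative to Theorem~\ref{theorem:milnor-and-obstruction} is that here there is no hypothesis $\deg\Delta_K(t)=2g$, so $\mathcal{B}$ might have rank strictly less than $g$; the fix will be to factor the relevant map through $(S^1)^g$ rather than directly through $B\mathcal{B}$.

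First, by Proposition~\ref{prop:psi-K-equals-H-3-of-derivative} we have $\psi(K,P)=i_*((f_J)_*([M_J]))$, and by construction $f_J$ takes values in the subgroup $\mathcal{B}=\langle\beta_1,\ldots,\beta_g\rangle\leq \mathcal{A}(K)/\widetilde{P}$, which is finitely generated and free abelian by Proposition~\ref{proposition:B}. So I would write $f_J=j\circ\bar{f}_J$ for a map $\bar{f}_J\colon M_J\to B\mathcal{B}$, reducing the problem to showing $(\bar{f}_J)_*([M_J])=0$ in $H_3(B\mathcal{B};\Z)$.

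Next, since $J$ is a derivative the Seifert form vanishes on $H\times H$, so all pairwise linking numbers among components of $J$ are zero and $H_1(M_J;\Z)\cong\Z^g$ is freely generated by the meridians $\mu_1,\ldots,\mu_g$. Because $\mathcal{B}$ is abelian, $\bar{f}_J$ factors up to homotopy as $M_J\xrightarrow{h}(S^1)^g\xrightarrow{\sigma}B\mathcal{B}$, where $h$ realises the abelianised assignment $\mu_i\mapsto e_i$ and $\sigma$ is induced by the group homomorphism $e_i\mapsto\beta_i$. Now the argument used in the paragraph preceding Proposition~\ref{proposition:milnor-and-obstruction} applies verbatim to $h$: one homotopes each coordinate projection $M_J\to S^1$ so that the preimage of $1\in S^1$ is a capped-off Seifert surface for the corresponding component of $J$, and then the signed count of triple intersection points computes $\bar{\mu}_J(ijk)$. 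Under $H_3((S^1)^g;\Z)\cong\bigwedge^3\Z^g$ this gives
\[
h_*([M_J])=\sum_{i<j<k}\bar{\mu}_J(ijk)\,[e_i\times e_j\times e_k],
\]
which vanishes by hypothesis. Pushing forward by $\sigma_*$, then $j_*$, then $i_*$ then yields $\psi(K,P)=0$.

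The main obstacle, such as it is, is bookkeeping: I would need to check that the Pontryagin--Thom discussion preceding Proposition~\ref{proposition:milnor-and-obstruction} does not secretly use $\deg\Delta_K(t)=2g$ (it does not, since that hypothesis only entered in order to identify $\mathcal{B}$ with $\Z^g$, which I sidestep by factoring through $(S^1)^g$ instead), and to verify that $\bar{f}_J$ factors through the abelianisation of $\pi_1(M_J)$ at the space level and not merely on fundamental groups. Both are routine, so I do not expect any genuine difficulty beyond assembling these ingredients.
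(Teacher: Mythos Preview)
Your proposal is correct and is essentially the same argument as the paper's: both factor the map $f_J$ through the abelianisation $M_J\to (S^1)^g$ (the paper calls this $\ab_J$ and the subsequent map to $\mathcal{B}$ it calls $\pr_J$, matching your $h$ and $\sigma$), identify the image of $[M_J]$ in $H_3((S^1)^g)$ with the vector of triple linking numbers, and then push forward through $j_*$ and $i_*$. Your observation that the hypothesis $\deg\Delta_K(t)=2g$ is not needed here, because one can factor through $(S^1)^g$ rather than through $B\mathcal{B}$ directly, is exactly how the paper handles this as well.
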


\begin{proof}
Let $\bar{f}_J \colon \pi_1(M_J) \rightarrow \mathcal{B}$ be the map defined above, where $\mathcal{B}$ is a free abelian group by Proposition~\ref{proposition:B}. Let $\ab_J\colon \pi_1(M_J) \rightarrow H_1(M_J)$ be the abelianization map and let $\pr_J\colon H_1(M_J) \rightarrow \mathcal{B}$ be the projection map, so that $\pr_J \circ \ab_J = \bar{f}_J$. As above $\ab_J$ induces a map $\ab_J\colon M_J \rightarrow \prod^{g} S^1$, where $g$ is the rank of $H_1(M_J)$. Let $\{e_i \times e_j \times e_k \mid 1 \leq i < j < k \leq g \}$ be a basis for $H_3\big(\prod^{g}S^1\big)$. Then $(\ab_J)_{*}([M_J]) \in H_3(H_1(M_J))$ has coordinates $\{\bar{\mu}_{J}(ijk)\}$ with respect to the basis $\{e_i \times e_j \times e_k \mid 1 \leq i < j < k \leq g \}$. Since we are assuming that $J$ has all triple linking number vanishing, $(\ab_J)_{*}([M_J]) \in H_3(H_1(M_J))$ vanishes. This concludes the proof, since \[\psi(K,P) =i_* \circ j_* \circ (\bar{f}_J)_{*}([M_J])= i_* \circ j_* \circ (\pr_J)_{*} \circ (\ab_J)_{*}([M_J]).\] \end{proof}

\section{Determining the possible Milnor's invariants of derivatives}\label{section:possiblemilnor}

In this section we consider an algebraically slice knot $K$ with a genus three Seifert surface $\Sigma$. For the rest of this section, fix the following notation.  Let $H \subset H_1(\Sigma;\Z)$ be a metaboliser of the Seifert form of $K$ with respect to $\Sigma$, let $J = J_1\cup J_2 \cup J_3$ be a derivative of $K$ associated with $H$ and let $\delta_1 \cup \delta_2 \cup \delta_3$ be intersection duals of $J = J_1\cup J_2 \cup J_3$ on~$\Sigma$. Let $X := \big(\lk(\delta_i,J^+_j)\big)_{3 \times 3}$ be the linking matrix of the $\delta_i$ and the $J_j$.  Here $J^+_j$ is a positive push-off of $J_j$.
 With respect to the basis $\{\delta_1,\delta_2,\delta_3,J_1,J_2,J_3\}$ of $H_1(\Sigma;\Z)$, the Seifert form is of the type
 \[\bp A & X \\ X - \Id & 0 \ep.\]
 Recall that we denoted the set of the derivatives on $\Sigma$ associated with $H$ by $\mathfrak{d} K /\mathfrak{d}H$. As in the introduction, define
 \[S_{K,H} :=\{ \bar{\mu}_{L}(123) - \bar{\mu}_{L'}(123) \mid L,L' \in \mathfrak{d} K /\mathfrak{d}H, \mathfrak{o}(L) = \mathfrak{o}(L')\}\]
where $\mathfrak{o}(L) := [L_1] \wedge [L_2] \wedge [L_3] \in \bigwedge^3 H$.
The following result was proven by the first author in~\cite{Park16}.

\begin{theorem}\label{theorem:inclusion}
$S_{K,H} \supseteq \big(\det(X) - \det(X-\Id)\big) \mathbb{Z}$.
\end{theorem}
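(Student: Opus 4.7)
The plan is to construct, for every $n \in \Z$, an explicit derivative $J^{(n)} \in \mathfrak{d}K/\mathfrak{d}H$ with $\mathfrak{o}(J^{(n)}) = \mathfrak{o}(J)$ and
\[\bar{\mu}_{J^{(n)}}(123) - \bar{\mu}_J(123) = n \cdot \big(\det(X) - \det(X - \Id)\big).\]
This is enough to conclude that $\big(\det(X) - \det(X-\Id)\big)\Z \subseteq S_{K,H}$.

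First I would fix a disc--band presentation of $\Sigma$ compatible with the symplectic basis $\{\delta_1,\delta_2,\delta_3,J_1,J_2,J_3\}$, so that each basis curve is the core of one of the six bands. Inside a small disc in $\Sigma$ I would perform a local modification of the derivative: cut the curve $J_1$ along a short arc and reglue via a band that traces a specific loop on $\Sigma$ passing through the three $\delta$-bands in a prescribed pattern. The loop is chosen to be nullhomologous in $\Sigma$, so that $[J_1']=[J_1]$ in $H_1(\Sigma;\Z)$ and the components of the modified link remain pairwise disjoint simple closed curves on $\Sigma$; hence the result is again a derivative in $\mathfrak{d}K/\mathfrak{d}H$ with the same orientation class $\mathfrak{o}$.

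Next I would quantify the change in $\bar{\mu}(123)$ under one such move by means of Cochran's intersection formula \cite{Co85}, which for a three-component link with vanishing pairwise linking numbers computes $\bar{\mu}(123)$ as a signed count of triple intersections of any choice of Seifert surfaces $F_1,F_2,F_3$ for the three components. Because $\{[J_1],[J_2],[J_3]\}$ spans a metaboliser, the restriction of the Seifert pairing to the $J$-block vanishes, so natural Seifert surfaces for the $J_i$ can be built from $\Sigma$ together with push-offs of surfaces spanned by the $\delta_j$-bands; the linking data of these push-offs is encoded precisely by the blocks $X$ and $X-\Id$ of the Seifert matrix. After the band modification, the Seifert surfaces differ in a neighbourhood of the added band, and the newly introduced triple intersection points are counted by a determinantal expression in the entries of $X$ and $X-\Id$.

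The key step, and the main obstacle, is to identify this signed count cleanly with $\det(X) - \det(X-\Id)$. It amounts to a linear-algebra computation comparing the two natural choices of push-off framing (corresponding to the blocks $X$ and $X-\Id$ of the Seifert matrix), and recording the net contribution as a triple product of the columns of a $3\times 3$ matrix, whose expansion along one column produces exactly the difference of the two determinants. Once this identity is established for a single elementary modification, iterating it $|n|$ times (and reversing the orientation of the band for negative $n$) produces the family $J^{(n)}$ with the prescribed triple linking number, and establishes the inclusion.
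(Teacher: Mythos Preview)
The paper does not actually prove this theorem; it is quoted from \cite{Park16}, with only the one-sentence summary that the argument ``used a geometric argument to show how to change one derivative to another, changing the triple linking number by $\det(X) - \det(X-\Id)$.'' Your proposed strategy --- modify the derivative by a homologically trivial band move on $\Sigma$ and compute the resulting change in $\bar{\mu}(123)$ via Cochran's Seifert-surface triple-point count --- matches this description at the level of outline.

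That said, what you have written is a plan rather than a proof, and you say so yourself: the identification of the change with $\det(X)-\det(X-\Id)$ is flagged as ``the key step, and the main obstacle'' but is not carried out. Two points deserve attention if you want to turn this into an actual argument. First, the specific move you describe --- cutting only $J_1$ and rerouting it through ``the three $\delta$-bands'' along a nullhomologous loop --- is underspecified; a loop that passes once over each $\delta$-band is not nullhomologous, so you must mean something else (e.g.\ a loop in the disc part of the disc--band form encircling certain band feet), and the precise choice is exactly what governs the determinantal answer. Second, the heuristic that ``the two push-off framings correspond to $X$ and $X-\Id$, and expanding a triple product gives the difference of determinants'' is plausible but not a computation; in \cite{Park16} this is made precise via an explicit string link infection and a direct calculation of the resulting Seifert surfaces and their triple intersections. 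Until that step is written down, the proposal is an accurate sketch of the intended method but not yet a proof.
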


The proof of  Theorem~\ref{theorem:inclusion} used a geometric argument to show how to change one derivative to another, changing the triple linking number by $\det(X) - \det(X-\Id)$.
In this section we apply Theorem~\ref{theorem:milnor-and-obstruction} to obtain inclusions in the opposite direction, namely limitations on the possible changes of $\ol{\mu}$-invariants.  We will show that the inclusion in Theorem~\ref{theorem:inclusion} is an equality in some special cases.  We do not know whether it is an equality in general.

Let $n,d$ be integers and write $g_{(n,d,i)} = \gcd(n,d^i)$ for a positive integer $i$. Note that $g_{(n,d,i)}$ stabilises to some integer as $i$ gets large; we will denote this integer by $g_{(n,d)}$.

\begin{theorem}\label{theorem:otherinclusion}
In the notation introduced at the start of Section~\ref{section:possiblemilnor}, suppose that $$X := \big(\lk(\delta_i,J^+_j)\big)_{3\times3}$$ is a diagonal matrix $\diag(p_1,p_2,p_3)$ such that $p_i\cdot(p_i-1) \neq 0$ for each $i \in \{1,2,3\}$.
Let $n :=\det(X) - \det(X-\Id)$. Then $S_{K,H} \subseteq \frac{n}{m}\mathbb{Z}$, where $$m = \lcm(g_{(n,p_1)}, g_{(n,p_2)}, g_{(n,p_3)}, g_{(n,p_1-1)}, g_{(n,p_2-1)}, g_{(n,p_3-1)}).$$
\end{theorem}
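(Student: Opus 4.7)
\medskip\noindent\textbf{Proof plan.}
The strategy is to apply Theorem~\ref{theorem:milnor-and-obstruction} in genus three together with the observation that $\psi(K,P)$ depends only on the pair $(K,P)$.  Let $P \subset \AK$ be the lagrangian represented by $H$ (which exists by Lemma~\ref{lemma:Lagrangian}).  If $L,L' \in \mathfrak{d}K/\mathfrak{d}H$ satisfy $\mathfrak{o}(L) = \mathfrak{o}(L')$, the transition between the two bases of~$H$ given by the link components lies in $SL_3(\Z)$; its inverse transpose relates the two dual bases of $\mathcal{B}$, so the generator $\beta_1 \wedge \beta_2 \wedge \beta_3$ of $\bigwedge^3 \mathcal{B} = \Z$ is insensitive to this choice.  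Theorem~\ref{theorem:milnor-and-obstruction} then yields
\[
S_{K,H} \;\subseteq\; \ker\Bigl(\Z \xrightarrow{j_*} H_3(B(\mathcal{A}(K)/\widetilde{P});\Z) \xrightarrow{i_*} H_3(B(\Z \ltimes \mathcal{A}(K)/\widetilde{P});\Z)\Bigr),
\]
and the task reduces to computing this kernel.

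The next step is an explicit computation of $\mathcal{A}(K)/\widetilde{P}$ from the Seifert form.  Taking generators $\alpha_1,\ldots,\beta_3$ of $\mathcal{A}(K)$ from the basis of $H_1(S^3 \sm \Sigma;\Z)$ dual to $\{a_i,b_i\}$, the relations from $V - tV^T$ collapse, thanks to $X = \diag(p_1,p_2,p_3)$, to $\bigl((p_i-1) - tp_i\bigr)\alpha_i = 0$ and $q_i\beta_i + \sum_j (A_{ij} - tA_{ji})\alpha_j = 0$, with $q_i := p_i - (p_i-1)t$.  Since $[b_i] = (p_i-1)\alpha_i$ in $\mathcal{A}(K)$ and $P$ is the $\Q$-span of the $b_i$ in $\AK$, each $\alpha_i$ lies in $\widetilde{P}$; moreover the natural map $\bigoplus_i \Z[t,t^{-1}]/(q_i) = \mathcal{A}(K)/\langle\alpha_i\rangle_\Lambda \to \AK/P = \bigoplus_i \Q[t,t^{-1}]/(q_i)$ is the inclusion of each summand into its rationalisation, hence injective, which forces $\widetilde{P} = \langle \alpha_1,\alpha_2,\alpha_3\rangle_\Lambda$.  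This careful passage between integral and rational Alexander modules is the delicate point flagged in Remark~\ref{remark:obstruction}.  Evaluating $t$ at the unique root $p_i/(p_i-1) \in \Q$ identifies $R_i := \Z[t,t^{-1}]/(q_i)$ with the subring $\Z[1/(p_i(p_i-1))]$ of $\Q$, so
\[
\mathcal{A}(K)/\widetilde{P} \;\cong\; R_1 \oplus R_2 \oplus R_3,
\]
with $t$ acting on the $i$th summand as multiplication by $p_i/(p_i-1)$, and $\mathcal{B}$ the standard copy of $\Z^3$ spanned by the units.

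Since each $R_i$ is a torsion-free rank-one abelian group, $\bigwedge^n R_i$ vanishes for $n \geq 2$, and K\"unneth leaves only
\[
H_3(B(R_1 \oplus R_2 \oplus R_3);\Z) \;\cong\; R_1 \otimes R_2 \otimes R_3 \;\cong\; \Z[1/N], \qquad N := \textstyle\prod_i p_i(p_i-1),
\]
into which $j_*$ sends $\beta_1 \wedge \beta_2 \wedge \beta_3$ as $1$.  The Wang sequence for the fibration $B(\mathcal{A}(K)/\widetilde{P}) \to B(\Z \ltimes \mathcal{A}(K)/\widetilde{P}) \to S^1$ identifies $\ker(i_*)$ with $\im(t_*-\Id)$; the automorphism $t_*$ acts on the tensor product as multiplication by $\prod p_i/(p_i-1) = \det(X)/\det(X-\Id)$, so $t_* - \Id$ multiplies by $n/\det(X-\Id)$, with image $n\Z[1/N]$ since $\det(X-\Id)$ is a unit in $\Z[1/N]$.

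Finally, $S_{K,H} \subseteq n\Z[1/N] \cap \Z$.  An integer $k$ lies in this set iff $kN^\ell = na$ for some $a \in \Z$ and $\ell \geq 0$, equivalently $(n/\gcd(n,N^\ell)) \mid k$ for some~$\ell$; letting $\ell \to \infty$ gives $n\Z[1/N] \cap \Z = (n/g_{(n,N)})\Z$.  The prime-factorisation formula $g_{(n,M)} = \prod_{p \mid M} p^{v_p(n)}$ identifies $g_{(n,N)}$ with the stated $\lcm$, so $m = g_{(n,N)}$ and $S_{K,H} \subseteq (n/m)\Z$ as required.  The main obstacle in executing this plan is the identification of $\widetilde{P}$; once that is in hand the remainder is routine group-homology bookkeeping via K\"unneth and Wang.
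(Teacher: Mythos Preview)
Your proof is correct and follows essentially the same route as the paper: invoke Theorem~\ref{theorem:milnor-and-obstruction} to place $S_{K,H}$ inside $j_*^{-1}(\im(t_*-\Id))$, compute $\mathcal{A}(K)/\widetilde P$ as $\bigoplus_i \Lambda/(q_i)$ from the Seifert presentation, identify $H_3(\mathcal{A}(K)/\widetilde P)$ with the triple tensor product via torsion-freeness, and then determine $\im(t_*-\Id)\cap\im(j_*)$.  Your endgame is a little cleaner than the paper's: rather than applying the rationalisation map $\ell$ to a generic element $f_1\otimes f_2\otimes f_3$ and tracking the possible denominators of $\ell\circ(t_*-\Id)$, you identify the whole of $H_3$ with the localisation $\Z[1/N]$ and reduce to the elementary computation $n\Z[1/N]\cap\Z=(n/g_{(n,N)})\Z$, then observe $g_{(n,N)}=m$; this yields the same bound with less bookkeeping.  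Two small points worth making explicit when you write it up: the hypothesis $\deg\Delta_K=2g$ needed for Theorem~\ref{theorem:milnor-and-obstruction} follows from your computation of $\AK/P$, and your convention gives $q_i=p_i-(p_i-1)t$ whereas the paper has $(p_i-1)-p_it$; these differ by $t\leftrightarrow t^{-1}$ and both produce the same image $n\Z[1/N]$, so there is no conflict.
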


\noindent Note that it is automatic from the definitions of $n$ and $m$ that $m$ divides $n$.

\begin{proof}
Let $P \subset \AK$ be a lagrangian represented by $H$. Write $\Lambda = \Z[t,t^{-1}]$.  We have the following computations:
\begin{equation}\label{equation:alexandermodule}
\begin{split}
   \mathcal{A}(K)/\widetilde{P} & \cong \Lambda^3/\langle X^T - t X^T - \Id\rangle\\
   & \cong \Lambda^3 / \langle(1-t)X^T - \Id\rangle\\
   & \cong \Lambda/\langle(p_1-1)-p_1t\rangle \oplus \Lambda/\langle(p_2-1)-p_2t\rangle \oplus\Lambda/\langle(p_3-1)-p_3t\rangle
\end{split}
\end{equation}
 Also, note that $\deg \Delta_K(t) = 2g = 6$.

Fix a generator $\mathfrak{o}_{\fix}$ of $\bigwedge^3 H$. Let $J$ and $J'$ be two derivatives of $K$ associated with $H$ such that $\mathfrak{o}(J)= \mathfrak{o}(J') = \mathfrak{o}_{\fix}$. By Theorem~\ref{theorem:milnor-and-obstruction}, it follows that \[\psi(K,P) = i_* \circ j_* (\bar{\mu}_{J}(123) \cdot e_1\times e_2 \times e_3) = i_* \circ j_* (\bar{\mu}_{J'}(123)\cdot e_1\times e_2 \times e_3),\] where $e_1\times e_2 \times e_3$ is a generator for $H_3(\mathcal{B}) \cong H_3(\mathbb{Z}^3) \cong \mathbb{Z}$ corresponding to $\mathfrak{o}_{\fix}$.
Hence 
\[j_* ((\bar{\mu}_{J}(123)-\bar{\mu}_{J'}(123))\cdot e_1\times e_2 \times e_3) \in \ker(i_*) = \im (t_* - \id).\]
Moreover, note that since $\mathcal{A}(K)/\widetilde{P}$ is a $\mathbb{Z}$-torsion free from Proposition~\ref{proposition:B}, we have an isomorphism  of $\Lambda$-modules $\wedge^3(\mathcal{A}(K)/\widetilde{P}) \cong H_3(\mathcal{A}(K)/\widetilde{P})$ \cite[Chapter $5$]{Bro94}. By~(\ref{equation:alexandermodule}) we see that
\[H_3(\mathcal{A}(K)/\widetilde{P})\cong \Lambda/\langle(p_1-1)-p_1t\rangle \otimes \Lambda/\langle(p_2-1)-p_2t\rangle \otimes \Lambda/\langle(p_3-1)-p_3t\rangle.\] In particular $H_3(\mathcal{A}(K)/\widetilde{P})$ is a $\mathbb{Z}$-torsion free module and $j_*(e_1\times e_2 \times e_3) = 1 \otimes 1 \otimes 1 \in H_3(\mathcal{A}(K)/\widetilde{P})$ is nonzero element, hence $j_*$ is an injective map. Therefore, it will be enough to show that $\im(t_*-\id) \cap \im(j_*) \subseteq \langle\smfrac{n}{m} \cdot j_*(e_1\times e_2 \times e_3)\rangle$ to get our desired result.
Consider the map \[\ell \colon H_3(\mathcal{A}(K)/\widetilde{P}) \rightarrow H_3(\mathcal{A}(K)/\widetilde{P}) \otimes \mathbb{Q} \cong \mathbb{Q}.\] Here the isomorphism to $\Q$ follows since $\Lambda/\langle(p_i-1)-p_i t\rangle \otimes \Q \cong \Q$ for $i=1,2,3$. Then note that the image of $\ell \circ j_*$ is contained in $\Z \subset \Q$. Let $f_1(t) \otimes f_2(t) \otimes f_3(t)$ be any element in $H_3(\mathcal{A}(K)/\widetilde{P})$, and suppose that $(t_* -\id)\big(f_1(t) \otimes f_2(t) \otimes f_3(t)\big) \in \im(j_*)$. We calculate:
\begin{equation}\label{equation:image}
\begin{split}
& \ell\circ (t_*-\id)\big(f_1(t) \otimes f_2(t) \otimes f_3(t)\big)\\
    =& \ell \big(tf_1(t) \otimes tf_2(t) \otimes tf_3(t) - f_1(t) \otimes f_2(t) \otimes f_3(t)\big)\\
    =& \frac{(p_1-1)(p_2-1)(p_3-1)-p_1p_2p_3}{p_1p_2p_3} \left(f_1\Big(\smfrac{p_1-1}{p_1}\Big) \otimes f_2\Big(\smfrac{p_2-1}{p_2}\Big) \otimes f_3\Big(\smfrac{p_3-1}{p_3}\Big)\right)\\
    =& n\,\frac{-f_1\Big(\smfrac{p_1-1}{p_1}\Big) \otimes f_2\Big(\smfrac{p_2-1}{p_2}\Big) \otimes f_3\Big(\smfrac{p_3-1}{p_3}\Big)}{p_1p_2p_3}.
\end{split}
\end{equation}

Note from~(\ref{equation:image}) that the factors of the denominator of $\ell\circ (t_*-\id)\big(f_1(t) \otimes f_2(t) \otimes f_3(t)\big)$ come from the list $p_1,p_2,p_3,p_1-1,p_2-1,p_3-1$.  Hence if $$\ell\circ (t_*-\id)\big(f_1(t) \otimes f_2(t) \otimes f_3(t)\big) \in \mathbb{Z},$$ then $\ell\circ (t_*-\id)\big(f_1(t) \otimes f_2(t) \otimes f_3(t)\big)$ is divisible by $\frac{n}{m}$. For any element $N\cdot(1\otimes1\otimes1) \in \im(t_*-\id) \cap \im(j_*)$, where $N \in \Z$, we see that $\frac{n}{m}$ divides $N$, which concludes the proof of Theorem~\ref{theorem:otherinclusion}.
\end{proof}

Now we can prove Theorem~\ref{thm:determination-of-mu-123}, in which we determine the set $S_{K,H}$ precisely for certain knots and certain genus 3 Seifert surfaces.

\begin{theoremB}\label{corollary:otherinclusion}
In the notation introduced at the start of Section~\ref{section:possiblemilnor}, suppose that $$X := \big(\lk(\delta_i,J^+_j)\big)_{3\times3}$$ is a diagonal matrix $\diag(p_1,p_2,p_3)$.  Write $n := \det(X) - \det(X-\Id)$.  Suppose that $\gcd(p_i,n)=\gcd(p_i-1,n)=1$ and $p_i\cdot(p_i-1) \neq 0$ for all $i = 1,2,3$.
Then $$S_{K,H} = n\mathbb{Z}.$$
\end{theoremB}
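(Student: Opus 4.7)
The plan is to obtain Theorem B as an immediate consequence of Theorem~\ref{theorem:inclusion} and Theorem~\ref{theorem:otherinclusion}, by checking that the stabilised gcds defining $m$ all collapse to $1$ under the coprimality hypothesis.

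First I would observe that Theorem~\ref{theorem:inclusion} already gives the inclusion $S_{K,H} \supseteq n\mathbb{Z}$ without needing the diagonal structure or the coprimality assumptions, so only the reverse inclusion requires work. For that reverse inclusion, the diagonal hypothesis on $X$ and the non-vanishing of $p_i(p_i-1)$ place us exactly in the setting of Theorem~\ref{theorem:otherinclusion}, which yields $S_{K,H}\subseteq \frac{n}{m}\mathbb{Z}$ with
\[m = \lcm\bigl(g_{(n,p_1)},\,g_{(n,p_2)},\,g_{(n,p_3)},\,g_{(n,p_1-1)},\,g_{(n,p_2-1)},\,g_{(n,p_3-1)}\bigr).\]

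Next I would verify that $m = 1$. Recall $g_{(n,d)}$ is the stable value of $\gcd(n,d^i)$ as $i\to\infty$; a prime $q$ divides $g_{(n,d)}$ if and only if $q$ divides both $n$ and $d$. Hence $g_{(n,d)} = 1$ as soon as $\gcd(n,d) = 1$. The hypothesis $\gcd(p_i,n) = \gcd(p_i-1,n) = 1$ for $i=1,2,3$ therefore forces each of the six gcds appearing in the definition of $m$ to equal $1$, so $m = 1$ and $\tfrac{n}{m}\mathbb{Z} = n\mathbb{Z}$.

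Combining the two inclusions gives $n\mathbb{Z}\subseteq S_{K,H} \subseteq n\mathbb{Z}$, that is $S_{K,H} = n\mathbb{Z}$. There is no real obstacle here: the hard work is entirely contained in Theorem~\ref{theorem:otherinclusion} (whose proof uses the homology-ribbon obstruction $\psi(K,P)$ and the Wang sequence computation) and in Theorem~\ref{theorem:inclusion} (the geometric construction from \cite{Park16}). Theorem B is simply the clean special case where the arithmetic condition on the diagonal entries makes the upper and lower bounds match.
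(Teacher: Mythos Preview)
Your proof is correct and follows the same approach as the paper: apply Theorem~\ref{theorem:inclusion} for the inclusion $n\Z\subseteq S_{K,H}$, then Theorem~\ref{theorem:otherinclusion} together with the observation that the coprimality hypotheses force each $g_{(n,p_i)}$ and $g_{(n,p_i-1)}$ to equal $1$, hence $m=1$, for the reverse inclusion. Your added remark that a prime dividing $g_{(n,d)}$ must divide both $n$ and $d$ is a correct justification of the step the paper states without further comment.
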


\begin{proof} We have $S_{K,H} \supseteq n\mathbb{Z}$ from Theorem~\ref{theorem:inclusion}. In order to see $S_{K,H} \subseteq n\mathbb{Z}$ observe that $g_{(n,p_i)}=g_{(n,p_i-1)} = 1$, for $i = 1,2,3$ from Theorem~\ref{theorem:otherinclusion}. Hence $m=1$ in Theorem~\ref{theorem:otherinclusion} and this implies the desired result.
\end{proof}

There certainly exist integers $p_1,p_2,p_3$ that satisfy the assumptions of Theorem~\ref{thm:determination-of-mu-123} (for detailed calculations see Proposition~\ref{prop:propertyS} (\ref{propertyS:item1})). For instance, $p_1 = 3, p_2 = 5$, and $p_3 = 17$ satisfy the assumptions. We present the following corollary, which relates the homology ribbon obstruction with Milnor's triple linking number.

\begin{corollary}\label{corollary:TFAE} Suppose that
$$X := \big(\lk(\delta_i,J^+_j)\big)_{3\times3}$$ is a diagonal matrix  $\diag(p_1,p_2,p_3)$ such that $p_i\cdot(p_i-1) \neq 0$ for $i =1,2,3$.  Let $n =\det(X) - \det(X-\Id)$.
Let \[m := \lcm(g_{(n,p_1)}, g_{(n,p_2)}, g_{(n,p_3)}, g_{(n,p_1-1)}, g_{(n,p_2-1)}, g_{(n,p_3-1)})\] and let $P \subset \AK$ be a lagrangian represented by $H$. The following statements are equivalent.
\begin{enumerate}
\item\label{item:TFAE-1} For any derivative $J = J_1 \cup J_2 \cup J_3$ associated with $H$, $\bar{\mu}_{J}(123) \equiv 0 \mod{\frac{n}{m}}$.
\item\label{item:TFAE-2} There exists a derivative $J = J_1 \cup J_2 \cup J_3$ associated with $H$ such that $\bar{\mu}_{J}(123) \equiv 0 \mod {\frac{n}{m}}$.
\item\label{item:TFAE-3} $\psi(K,P) \equiv 0$.
\end{enumerate}
\end{corollary}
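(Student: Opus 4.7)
The plan is to verify the two equivalences $(1)\Leftrightarrow(2)$ and $(2)\Leftrightarrow(3)$ separately. The first is essentially formal from Theorem~\ref{theorem:otherinclusion}, which gives $S_{K,H}\subseteq(n/m)\Z$: any two derivatives $L,L'\in\mathfrak{d}K/\mathfrak{d}H$ with $\mathfrak{o}(L)=\mathfrak{o}(L')$ have $\bar{\mu}_L(123)-\bar{\mu}_{L'}(123)\in(n/m)\Z$, so all derivatives lie in a single residue class modulo $n/m$; hence one is $0\bmod(n/m)$ iff all are.

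For $(2)\Leftrightarrow(3)$: by Theorem~\ref{theorem:milnor-and-obstruction}, $\psi(K,P)=i_*\circ j_*\bigl(\bar{\mu}_J(123)\cdot(e_1\times e_2\times e_3)\bigr)$, and $j_*$ is injective by the argument in the proof of Theorem~\ref{theorem:otherinclusion}. Thus $\psi(K,P)=0$ if and only if $\bar{\mu}_J(123)\cdot j_*(e_1\times e_2\times e_3)\in\ker(i_*)=\im(t_*-\id)$, so the entire problem reduces to identifying the subgroup
$$A:=\{M\in\Z\mid M\cdot j_*(e_1\times e_2\times e_3)\in\im(t_*-\id)\}\leq\Z.$$
The inclusion $A\subseteq(n/m)\Z$ is exactly the computation performed in the proof of Theorem~\ref{theorem:otherinclusion}, so the only new content needed is the reverse inclusion $(n/m)\Z\subseteq A$.

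To establish this I would unpack $H_3(\mathcal{A}(K)/\widetilde{P})$ explicitly. From~(\ref{equation:alexandermodule}) we have $\mathcal{A}(K)/\widetilde{P}\cong M_1\oplus M_2\oplus M_3$ with $M_i:=\Lambda/\langle(p_i-1)-p_it\rangle$, the splitting being $t$-equivariant. Since $p_i(p_i-1)\ne0$, sending $t\mapsto(p_i-1)/p_i$ identifies $M_i$ as an abelian group with the subring $\Z[1/p_i,1/(p_i-1)]\subset\Q$ on which $t$ acts as multiplication by $(p_i-1)/p_i$, and under the isomorphism of~(\ref{equation:alexandermodule}) the generator $\beta_i\in\mathcal{B}$ corresponds to $1\in M_i$. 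Because each $M_i$ is a filtered colimit of copies of $\Z$, its integral group homology vanishes in degrees $\geq 2$, so the Künneth theorem supplies a $t$-equivariant isomorphism
$$H_3(\mathcal{A}(K)/\widetilde{P})\cong M_1\otimes_\Z M_2\otimes_\Z M_3\cong R:=\Z\bigl[\tfrac{1}{p_i},\tfrac{1}{p_i-1}:i=1,2,3\bigr]\subset\Q,$$
under which $j_*(e_1\times e_2\times e_3)$ corresponds to $1\in R$, while $t_*-\id$ is multiplication by $\prod_{i}\tfrac{p_i-1}{p_i}-1=-n/(p_1p_2p_3)$.

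With this in hand, $M\in A$ iff $Mp_1p_2p_3/n$ lies in $R$, i.e.\ has no prime in its denominator other than those dividing some $p_i(p_i-1)$. Equivalently, every prime $q\mid n$ with $q\nmid\prod_ip_i(p_i-1)$ must satisfy $q^{v_q(n)}\mid M$. By unpacking the definitions of $g_{(n,d)}$ and $m$, the product of $q^{v_q(n)}$ over such $q$ is precisely $n/m$, so $A=(n/m)\Z$ and the proof is complete. The principal obstacle is the Künneth bookkeeping: verifying $t$-equivariance of the Künneth splitting and tracking that the image of $j_*(e_1\times e_2\times e_3)$ is really $1\in R$ under the chain of identifications. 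Once this is done the rest is elementary prime-factorisation.
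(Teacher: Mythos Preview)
Your argument is correct and rests on the same identification $H_3(\mathcal{A}(K)/\widetilde{P})\cong M_1\otimes M_2\otimes M_3$ with $M_i\cong\Z[1/p_i,1/(p_i-1)]$ that the paper uses, but the two proofs diverge in how the reverse inclusion $(n/m)\Z\subseteq A$ is established. The paper works directly in the tensor product: using $\gcd(g_{(n,p_i)},g_{(n,p_i-1)})=1$ it produces an explicit element $s\cdot p_1f_1(t)\otimes p_2f_2(t)\otimes p_3f_3(t)$, where $f_i$ corresponds to $1/(g_{(n,p_i)}g_{(n,p_i-1)})\in M_i$ and $s$ is an integer balancing factor, and then checks by hand that $t_*-\id$ carries it to $-(n/m)\cdot j_*(e_1\times e_2\times e_3)$. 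Your extra step of collapsing the tensor product to the single localisation $R\subset\Q$, in which $t_*-\id$ becomes multiplication by $-n/(p_1p_2p_3)$, lets the result drop out of a reading of prime valuations and explains conceptually why $n/m$ is the answer; the price is the K\"unneth bookkeeping you already flag (naturality with respect to $t$, vanishing of $H_{\geq 2}(M_i;\Z)$ for rank-one torsion-free $M_i$), which the paper's explicit preimage sidesteps.

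One small care point on your $(1)\Leftrightarrow(2)$: $S_{K,H}$ only compares derivatives with equal $\mathfrak{o}$, so the $\bar\mu$-values of \emph{all} derivatives actually occupy the two residue classes $\pm c\bmod(n/m)$ rather than a single one (swapping two components, or reversing one, passes between them). This does not affect the conclusion, since $c\equiv 0$ iff $-c\equiv 0$, but the phrase ``a single residue class'' should be adjusted. The paper avoids this by running the cycle $(1)\Rightarrow(2)\Rightarrow(3)\Rightarrow(1)$ and proving $(3)\Rightarrow(1)$ for each derivative individually via Theorem~\ref{theorem:milnor-and-obstruction}, rather than comparing derivatives through $S_{K,H}$.
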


\begin{proof}
That (\ref{item:TFAE-1}) implies (\ref{item:TFAE-2}) is straightforward, since every metaboliser can be represented by a derivative link. To see (\ref{item:TFAE-3}) implies (\ref{item:TFAE-1}), we argue as follows. Assume that $\psi(K,P) \equiv 0$. Then for any derivative $J = J_1 \cup J_2 \cup J_3$ associated with $H$, \[j_* (\bar{\mu}_{J}(123) \cdot e_1\times e_2 \times e_3) \in \im(t_*-\id)\cap \im(j_*).\]  In addition, from the proof of Theorem~\ref{theorem:otherinclusion} we saw that $\im(t_*-\id) \cap \im(j_*) \subseteq \langle\frac{n}{m} \cdot j_*(e_1\times e_2 \times e_3)\rangle$, hence we conclude that $\frac{n}{m}$ divides $\bar{\mu}_{J}(123)$.  This completes the proof that (\ref{item:TFAE-3}) implies (\ref{item:TFAE-1}).

In order to prove that (\ref{item:TFAE-2}) implies (\ref{item:TFAE-3}), we only need to show that $\im(t_*-\id) \cap \im(j_*) = \langle\frac{n}{m} \cdot j_*(e_1\times e_2 \times e_3)\rangle$, since $\psi(K,P) = i_* \circ j_* (\bar{\mu}_{J}(123) \cdot e_1\times e_2 \times e_3)$ by Theorem~\ref{theorem:milnor-and-obstruction}. For $i=1,2,3$ there is a canonical identification
\[\Lambda/\langle(p_i-1)-p_it\rangle \cong \mathbb{Z}\Big[\smfrac{p_i-1}{p_i},\smfrac{p_i}{p_i-1}\Big] \cong \mathbb{Z}\Big[\smfrac{1}{p_i},\smfrac{1}{p_i-1}\Big].\]
  Moreover, for $i=1,2,3$ we have that $\gcd(g_{(n,p_i)},g_{(n,p_i-1)}) = 1$. Hence there exists an element $f_i(t) \in \Lambda/\langle(p_i-1)-p_it\rangle$ that corresponds to $\frac{1}{g_{(n,p_i)}\cdot g_{(n,p_i-1)}} \in \mathbb{Z}\Big[\smfrac{1}{p_i},\smfrac{1}{p_i-1}\Big]$. Let $$s=\frac{g_{(n,p_1)}g_{(n,p_2)}g_{(n,p_3)}g_{(n,p_1-1)}g_{(n,p_2-1)}g_{(n,p_3-1)}}{m} \in \mathbb{Z}$$ and consider \[s\cdot p_1 f_1(t) \otimes p_2f_2(t) \otimes p_3f_3(t) \in H_3(\mathcal{A}(K)/\widetilde{P}).\] We calculate:
\begin{equation*}
\begin{split}
& (t_*-\id)\big(s\cdot p_1f_1(t) \otimes p_2f_2(t) \otimes p_3f_3(t)\big)\\
    =& s(p_1-1)f_1(t) \otimes (p_2-1)f_2(t) \otimes (p_3-1)f_3(t) - sp_1f_1(t) \otimes p_2f_2(t) \otimes p_3f_3(t)\\
    =& s((p_1-1)(p_2-1)(p_3-1) -p_1p_2p_3)\cdot\big(f_1(t) \otimes f_2(t) \otimes f_3(t)\big)\\
    =& -ns\cdot\big(f_1(t) \otimes f_2(t) \otimes f_3(t)\big)\\
    =& -\frac{n}{m}g_{(n,p_1)}g_{(n,p_2)}g_{(n,p_3)}g_{(n,p_1-1)}g_{(n,p_2-1)}g_{(n,p_3-1)} \cdot\big(f_1(t) \otimes f_2(t) \otimes f_3(t)\big)\\
    =& -\frac{n}{m}\cdot\big(g_{(n,p_1)}g_{(n,p_1-1)}f_1(t) \otimes g_{(n,p_2)}g_{(n,p_2-1)}f_2(t) \otimes g_{(n,p_3)}g_{(n,p_3-1)}f_3(t)\big)\\
    =& -\frac{n}{m}\cdot j_*(e_1\times e_2 \times e_3).
\end{split}
\end{equation*}
Therefore $\im(t_*-\id) \cap \im(j_*) \supseteq \langle\frac{n}{m} \cdot j_*(e_1\times e_2 \times e_3)\rangle$, which concludes the proof that (\ref{item:TFAE-2}) implies (\ref{item:TFAE-3}).
\end{proof}



\section{Algebraically slice knots with non-vanishing \texorpdfstring{$\mathbb{Z}[\mathbb{Z}]$}{Z[Z]} homology ribbon obstruction}

In this section, we construct algebraically slice knots with non-vanishing $\mathbb{Z}[\mathbb{Z}]$ homology ribbon obstruction. Later, we will relate these examples to the doubly-solvable filtration and to a generalised version of the Kauffman conjecture. The following proposition is an observation from \cite[Section 9]{Lev69}. We give a quick proof for the convenience of the reader.

\begin{proposition}\label{proposition:Levine}
Let $H \subset H_1(F)$ be a metaboliser of the Seifert form and let $M$ be a Seifert matrix $($i.e.\ a square matrix over $\Z$ such that $M-M^{T}$ is invertible$)$, that is itself invertible over $\mathbb{Q}$. If $H_\mathbb{Q} = H \otimes \mathbb{Q}$, then $M^{-1}M^T (H_\mathbb{Q}) =H_\mathbb{Q}$.
\end{proposition}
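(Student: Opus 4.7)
The plan is to reduce the claim to a short computation with the two one-sided orthogonal complements of $H_\Q$ with respect to the Seifert form.

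First I would fix some basis of $H_1(F;\Z)$, identify $M$ with the Seifert form $\beta(v,w) = v^T M w$, and note that since $M$ is invertible over $\Q$, the form $\beta$ is non-degenerate on $H_1(F;\Q)$. In particular, for any subspace $V \subseteq H_1(F;\Q)$, the left and right annihilators
\[
V^{\perp_L} = \{u : u^T M w = 0 \text{ for all } w \in V\}, \qquad
V^{\perp_R} = \{u : w^T M u = 0 \text{ for all } w \in V\}
\]
both have dimension $\dim H_1(F;\Q) - \dim V$. Since $H_\Q$ is a metaboliser, it has half the rank of $H_1(F;\Q)$ and satisfies $H_\Q \subseteq H_\Q^{\perp_L} \cap H_\Q^{\perp_R}$; by comparing dimensions I conclude $H_\Q = H_\Q^{\perp_L} = H_\Q^{\perp_R}$.

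Next, let $T := M^{-1}M^T$, which is a well-defined $\Q$-linear automorphism of $H_1(F;\Q)$ because $M$ is invertible over $\Q$. For $v \in H_\Q$ and any $w \in H_\Q$, I compute
\[
w^T M (Tv) = w^T M (M^{-1} M^T v) = w^T M^T v = (v^T M w)^T = v^T M w = 0,
\]
where the last equality uses that $v,w \in H_\Q$ and $H_\Q$ is a metaboliser. Hence $Tv \in H_\Q^{\perp_L} = H_\Q$, so $T(H_\Q) \subseteq H_\Q$.

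Finally, since $T$ is an automorphism of $H_1(F;\Q)$, its restriction to $H_\Q$ is injective, and injectivity on the finite-dimensional space $H_\Q$ forces $T(H_\Q) = H_\Q$, which is the required equality $M^{-1}M^T(H_\Q) = H_\Q$. No step appears to present any real obstacle; the only subtlety is being careful that the Seifert form is not symmetric, which is why I had to verify that $H_\Q$ equals both its left and right annihilators before making the main computation.
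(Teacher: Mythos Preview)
Your proof is correct and follows essentially the same approach as the paper's: show $T(H_\Q) \subseteq H_\Q$ via the computation $w^T M(Tv) = w^T M^T v = v^T M w = 0$, then use invertibility of $T$ to upgrade the inclusion to equality. The paper's version is terser and does not explicitly separate the left and right annihilators, simply writing $H_\Q^\perp = H_\Q$; your extra care with the asymmetry is a welcome clarification (though note a small slip: the computation $w^T M(Tv)=0$ for all $w\in H_\Q$ places $Tv$ in your $H_\Q^{\perp_R}$, not $H_\Q^{\perp_L}$ --- harmless, since you have already shown both equal $H_\Q$).
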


\begin{proof}
Let $x \in H_\mathbb{Q}$ and let $M^{-1}M^{T}x = y$, then $M^{T}x = My$. Then for any element $z \in H_\mathbb{Q}$, $z^T M y = z^T M^{T}x = 0$, hence $y\in H_{\mathbb{Q}}^{\perp} = H_\mathbb{Q}$.
\end{proof}

We will apply Proposition~\ref{proposition:Levine} to a knot to compute all possible metabolisers when a Seifert form of the knot satisfies certain conditions.

\begin{proposition}\label{proposition:possiblemetaboliser}
Let $M$ be a $6\times 6$ Seifert matrix that is invertible over $\mathbb{Q}$. Moreover, assume that $M^{-1}M^{T}$ has six distinct eigenvalues $\lambda_1, \dots, \lambda_6$ where $\lambda_i \neq 1$ for $i \in \{1, \dots, 6 \}$ and let $v_1, \dots, v_6$ be eigenvectors associated to $\lambda_1, \dots, \lambda_6$. Then the possible metabolisers of the Seifert form $M$ are precisely the subspaces $\spn(v_i, v_j, v_k) \cap \mathbb{Z}^6$, where $v_i, v_j, v_k$ represent curves on the Seifert surface with pairwise intersection zero, that is $v_i^{T}Mv_j = v_j^{T}Mv_i$, $v_i^{T}Mv_k = v_k^{T}Mv_i$, and $v_j^{T}Mv_k = v_k^{T}M v_j$.
\end{proposition}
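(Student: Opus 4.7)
The plan is to translate the problem into linear algebra on the eigenspaces of $T := M^{-1}M^T$ and then to read off the metaboliser condition from the interaction of $T$ with the Seifert form. By Proposition~\ref{proposition:Levine}, any metaboliser $H$ gives a $T$-invariant rational subspace $H_\Q$. Since $T$ has six distinct eigenvalues, it is diagonalisable over $\overline{\Q}$ with one-dimensional eigenspaces spanned by $v_1,\dots,v_6$, so every $T$-invariant subspace is a direct sum of such lines. In particular, after extending scalars to $\overline{\Q}$, every three-dimensional invariant subspace has the form $\spn(v_i,v_j,v_k)$ for some triple of indices.

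Next I will determine when such a span is isotropic for the Seifert form. Transposing the eigenvector identity $M^T v = \lambda M v$ and pairing with $v$ itself yields $v^T M v = \lambda\, v^T M v$, so $v^T M v = 0$ using the hypothesis $\lambda \neq 1$. The analogous manipulation for two eigenvectors with eigenvalues $\lambda_i, \lambda_j$ produces
\[
v_i^T M v_j \;=\; \lambda_i\, v_j^T M v_i \qquad\text{and}\qquad v_j^T M v_i \;=\; \lambda_j\, v_i^T M v_j,
\]
whence $(1-\lambda_i\lambda_j)\,v_i^T M v_j = 0$. When $\lambda_i\lambda_j \neq 1$ both Seifert pairings vanish automatically; when $\lambda_i\lambda_j = 1$, the single equality $v_i^T M v_j = v_j^T M v_i$ (combined with the identities above) forces both to vanish. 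Thus the Seifert form vanishes on $\spn(v_i,v_j,v_k)$ if and only if the three intersection-zero conditions stated in the proposition hold.

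The last step is descent from $\overline{\Q}$ back to $\Z$. Combining the previous two paragraphs, every metaboliser $H$ satisfies $H_\Q \otimes \overline{\Q} = \spn(v_i,v_j,v_k)$ for an admissible triple, and conversely every admissible span is both $T$-invariant and Seifert-isotropic. The main technical obstacle I expect is this descent: one must verify that $\spn(v_i,v_j,v_k)\cap\Z^6$ really is a rank three $\Z$-summand of $\Z^6$. Because $M$ has rational entries and $H$ is defined over $\Z$, the chosen triple of eigenvectors is forced to be stable under the Galois action on $\overline{\Q}$, so $\spn(v_i,v_j,v_k)$ descends to a rational subspace whose intersection with $\Z^6$ is a saturated rank three submodule annihilated by the Seifert form, i.e.\ a metaboliser.
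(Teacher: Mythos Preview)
Your proof is correct and follows the same route as the paper: both invoke Proposition~\ref{proposition:Levine} to get $T$-invariance of $H_\Q$, deduce that $H_\Q$ is spanned by three eigenvectors (the paper does this via an explicit Vandermonde computation with the matrix $(\lambda_i^{j-1}a_i)$ rather than your cleaner appeal to the structure of invariant subspaces for a diagonalisable operator), and verify isotropy from the relation $M^T v_i = \lambda_i M v_i$ together with $\lambda_i\neq 1$. Your discussion of the descent to $\Z$ via Galois stability is more careful than the paper, which does not raise the issue; note though that your Galois argument only runs in the direction ``metaboliser $\Rightarrow$ rational span'', and for an arbitrary admissible triple the span need not be defined over $\Q$---a gap present in the paper's statement as well, and harmless in the application (Corollary~\ref{corollary:eightmetabolisers}) where all eigenvalues and eigenvectors are visibly rational.
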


\begin{proof}
For simplicity, assume that $v_1, v_2$ and $v_3$ have pairwise intersection zero. We claim that $\spn(v_1, v_2, v_3) \cap \mathbb{Z}^6$ is a metaboliser. Since $M^{-1}M^{T}v_i = \lambda_i\cdot v_i$ we have $M^{T}v_i = \lambda_i\cdot Mv_i$ and $v_j^{T}M^{T}v_i = \lambda_i\cdot v_j^{T}Mv_i = \lambda_i\cdot v_i^{T}M^{T}v_j=\lambda_i\cdot v_j^{T}M^{T}v_i$ for all $i,j \in \{1,2,3\}$. Therefore $\spn(v_1, v_2, v_3) \cap \mathbb{Z}^6$ is a metaboliser as claimed.

For the other direction, let $H$ be a metaboliser and let $H_\mathbb{Q} = H \otimes \mathbb{Q}$. Let $x=a_1v_1 + \cdots + a_6v_6$ be an element of $H_\mathbb{Q}$. Then by Proposition~\ref{proposition:Levine}, $(M^{-1}M^T)^jx =  \lambda_1^ja_1v_1 + \cdots + \lambda_6^ja_6v_6 \in H_\mathbb{Q}$ for all $j$. In particular the column space of the $6\times 6$ matrix  $$E = \begin{pmatrix} \lambda_i^{j-1} a_i \end{pmatrix},$$ with respect to the basis $\{v_1,\dots,v_6\}$, is contained in $H_\mathbb{Q}$. Observe that the rank of $E$, which is the rank of the row space of $E$, is equal to the number of nonzero $a_i$. Since the dimension of $H_\mathbb{Q}$ is $3$, we conclude that the number of nonzero $a_i$ is at most $3$. Hence $H=\spn(v_i, v_j, v_k) \cap \mathbb{Z}^6$, where $v_i, v_j, v_k$ have pairwise intersection zero. \end{proof}

\noindent We have the following corollary for a rather specific case.

\begin{corollary}\label{corollary:eightmetabolisers}
In the notation introduced at the start of Section~\ref{section:possiblemilnor}, suppose that $$X := \big(\lk(\delta_i,J^+_j)\big)_{3\times3}$$ is a diagonal matrix $\diag(p_1,p_2,p_3)$ such that $p_i\cdot(p_i-1) \neq 0$ for $i\in \{1,2,3\}$ and such that \[\smfrac{p_1}{p_1-1},\smfrac{p_1-1}{p_1},\smfrac{p_2}{p_2-1},\smfrac{p_2-1}{p_2},\smfrac{p_3}{p_3-1} \text{ and } \smfrac{p_3-1}{p_3}\] are distinct rational numbers. In addition, assume that $A := \big(\lk(\delta_i,\delta^+_j)\big)_{3\times3}$ is the $3 \times 3$ zero matrix. Then~$K$ has eight possible metabolisers.
\end{corollary}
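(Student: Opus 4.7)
The plan is to reduce everything to Proposition~\ref{proposition:possiblemetaboliser} by identifying the eigendata of $M^{-1}M^T$ explicitly and then counting the admissible triples of eigenvectors.

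First I would write down the Seifert matrix in the given basis $\{\delta_1,\delta_2,\delta_3,J_1,J_2,J_3\}$. Since $A=0$, it has the block form $M=\bp 0 & X \\ X-I & 0 \ep$ with $X=\diag(p_1,p_2,p_3)$. The hypothesis $p_i(p_i-1)\neq 0$ makes both $X$ and $X-I$ invertible, so $M$ is invertible over $\Q$, with inverse $M^{-1}=\bp 0 & (X-I)^{-1} \\ X^{-1} & 0 \ep$. A direct block multiplication then gives
\[ M^{-1}M^T \;=\; \bp (X-I)^{-1}X & 0 \\ 0 & X^{-1}(X-I) \ep, \]
a diagonal matrix whose diagonal entries are $\tfrac{p_1}{p_1-1},\tfrac{p_2}{p_2-1},\tfrac{p_3}{p_3-1},\tfrac{p_1-1}{p_1},\tfrac{p_2-1}{p_2},\tfrac{p_3-1}{p_3}$. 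By hypothesis these are six distinct rationals, and none equals $1$ (which would force $p_i=p_i-1$). Thus the hypotheses of Proposition~\ref{proposition:possiblemetaboliser} are met, and the six eigenvectors can be taken to be the standard basis vectors $\delta_1,\delta_2,\delta_3,J_1,J_2,J_3$ themselves.

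Next I would compute the intersection form. Using $\beta_\Sigma(x,y)-\beta_\Sigma(y,x)=x\cdot y$, the matrix of the intersection pairing in this basis is $M-M^T=\bp 0 & I \\ -I & 0 \ep$. So the only nonzero intersection pairings among the chosen eigenvectors are $\delta_i\cdot J_i = 1$ for $i=1,2,3$; any other pairing of two basis vectors vanishes.

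By Proposition~\ref{proposition:possiblemetaboliser}, the metabolisers are precisely the submodules $\spn(v_i,v_j,v_k)\cap\Z^6$ where $\{v_i,v_j,v_k\}$ is a triple of eigenvectors with pairwise vanishing intersection. The previous paragraph shows that such a triple must not contain both $\delta_i$ and $J_i$ for any $i$, i.e.\ it must consist of exactly one element from each of the three pairs $\{\delta_1,J_1\},\{\delta_2,J_2\},\{\delta_3,J_3\}$. There are exactly $2^3=8$ such choices, so $K$ has precisely eight metabolisers, as claimed. No single step is really the main obstacle: the only thing to be careful about is bookkeeping with transposes in the $M^{-1}M^T$ computation; once this is done, the eigenvectors coincide with the fixed basis and the count of triples is immediate.
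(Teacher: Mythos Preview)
Your proof is correct and follows essentially the same approach as the paper. The only cosmetic difference is that the paper works in the interleaved basis $\{\delta_1,J_1,\delta_2,J_2,\delta_3,J_3\}$, so that its $M^{-1}M^T$ has diagonal $\big(\tfrac{p_1}{p_1-1},\tfrac{p_1-1}{p_1},\tfrac{p_2}{p_2-1},\tfrac{p_2-1}{p_2},\tfrac{p_3}{p_3-1},\tfrac{p_3-1}{p_3}\big)$ and the admissible triples are indexed by $\{1,2\}\times\{3,4\}\times\{5,6\}$; you instead use the block basis and arrive at the same count via the three pairs $\{\delta_i,J_i\}$.
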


\begin{proof}
Let $M$ be the Seifert matrix with respect to the basis \[\{\alpha_1=[\delta_1], \alpha_2=[J_1], \alpha_3 = [\delta_2], \alpha_4=[J_2], \alpha_5=[\delta_3], \alpha_6=[J_3]  \}.\] Then
$$M^{-1}M^{T}=  \diag\left(\smfrac{p_1}{p_1-1},\smfrac{p_1-1}{p_1},\smfrac{p_2}{p_2-1},\smfrac{p_2-1}{p_2},\smfrac{p_3}{p_3-1},\smfrac{p_3-1}{p_3}\right)$$
is a $6 \times 6$ diagonal matrix with six distinct eigenvalues \[\lambda_1 = \smfrac{p_1}{p_1-1},\lambda_2 =\smfrac{p_1-1}{p_1},\lambda_3 =\smfrac{p_2}{p_2-1},\lambda_4 =\smfrac{p_2-1}{p_2},\lambda_5 =\smfrac{p_3}{p_3-1},\lambda_6 =\smfrac{p_3-1}{p_3}\] and $\alpha_i$ is an eigenvector associated with $\lambda_i$ for $i \in \{1, \dots, 6 \}$. Furthermore, since $\delta_1 \cup \delta_2 \cup \delta_3$ are the intersection duals of $J_1 \cup J_2 \cup J_3$, we conclude that the following comprise all the possible metabolisers:
\[H = \spn\{ \alpha_i, \alpha_j, \alpha_k \} \text{ where } i \in \{1,2\}, j \in \{3,4\}, k \in \{5,6\}. \qedhere\]
\end{proof}

Next we will present examples of knots $K$ where $\psi(K,P) \neq 0$ for all possible lagrangians $P \subset \AK$.  These examples will be used to prove Theorems \ref{theorem:doubly-slice-theorem} and \ref{thm:any-seifert-surface-non-trivial-mu-123}.

\begin{example}\label{example1}
We continue to use the notation from the start of Section~\ref{section:possiblemilnor}.  In particular, with respect to the basis $\{\delta_1,\delta_2,\delta_3,J_1,J_2,J_3\}$, the Seifert form looks like  \[\bp A & X \\ X - \Id & 0 \ep.\]
 Suppose that $$X := \big(\lk(\delta_i,J^+_j)\big)_{3\times3} = \diag(p_1,p_2,p_3),$$ and assume that $A = \big(\lk(\delta_i,\delta^+_j)\big)_{3\times3}$ is the $3 \times 3$ zero matrix. Start with the knot drawn in Figure~\ref{figure:diskband2}.  We have a disc-band form for the Seifert surface $\Sigma$, also depicted in Figure~\ref{figure:diskband2}. Perform double Borromean rings insertion moves to tie Borromean rings into the bands of the Seifert surface by string link infections (for a precise definition of string link infection see \cite[Section~2.4]{Park16}, for instance)
 to arrange that $\bar{\mu}_{L}(123) = 1$ for each of the $8$ choices of $L=L_1\cup L_2 \cup L_3$ with $L_i \in \{J_i, \delta_i\}$ for each $i =1,2,3$. Let
$$\begin{pmatrix}
n_1\\
n_2\\
n_3\\
n_4\end{pmatrix} := \begin{pmatrix}
p_1\cdot p_2 \cdot p_3 - (p_1-1)\cdot (p_2-1) \cdot (p_3-1)\\
p_1\cdot p_2 \cdot (p_3-1) - (p_1-1)\cdot (p_2-1) \cdot p_3\\
p_1\cdot (p_2-1) \cdot p_3 - (p_1-1)\cdot p_2 \cdot (p_3-1)\\
(p_1-1)\cdot p_2 \cdot p_3 - p_1\cdot (p_2-1) \cdot (p_3-1)\end{pmatrix}$$ and let $m_i = \lcm\big(g(n_i,p_1),g(n_i,p_2),g(n_i,p_3),g(n_i,p_1-1),g(n_i,p_2-1),g(n_i,p_3-1)\big)$ for $i \in \{1,2,3,4\}$.

Note that there are infinitely many triples of integers $\{p_1, p_2, p_3\}$ such that $|\frac{n_i}{m_i}|>1$ and $p_i\cdot(p_i-1) \neq 0$ for $i = 1,2,3$ (see the proof of Proposition~\ref{prop:propertyS}~(\ref{propertyS:item1})). Suppose that $\{p_1, p_2, p_3\}$ is a such triple. Then by Corollary~\ref{corollary:eightmetabolisers}, there are eight possible metabolisers:
$$H = \spn\{ [L_1], [L_2], [L_3] \} \text{ where } L_i \in \{J_i,\delta_i\} \text{ for } i\in \{1,2,3\}.$$
Let $P \subset \AK$ be some lagrangian of $K$ and note that by Lemma~\ref{lemma:Lagrangian}, $P$ can be represented by some metaboliser. Since $|\frac{n_i}{m_i}| > 1$ and $\bar{\mu}_{L}(123) = 1$, for any link $L$ with $L_i \in \{J_i, \delta_i\}$ for $i =1,2,3$, it follows from Corollary~\ref{corollary:TFAE} that $\psi(K,P) \neq 0$.

\begin{figure}[htbp]
\labellist\small
\pinlabel{$p_1$} at 97 80
\pinlabel{$p_2$} at 295 80
\pinlabel{$p_3$} at 493 80
\pinlabel{$\delta_1$} at  63 20
\pinlabel{$\delta_2$} at 260 20
\pinlabel{$\delta_3$} at 460 20
\pinlabel{$J_1$} at 127 20
\pinlabel{$J_2$} at 325 20
\pinlabel{$J_3$} at 525 20
    \endlabellist
   \includegraphics[scale=0.7]{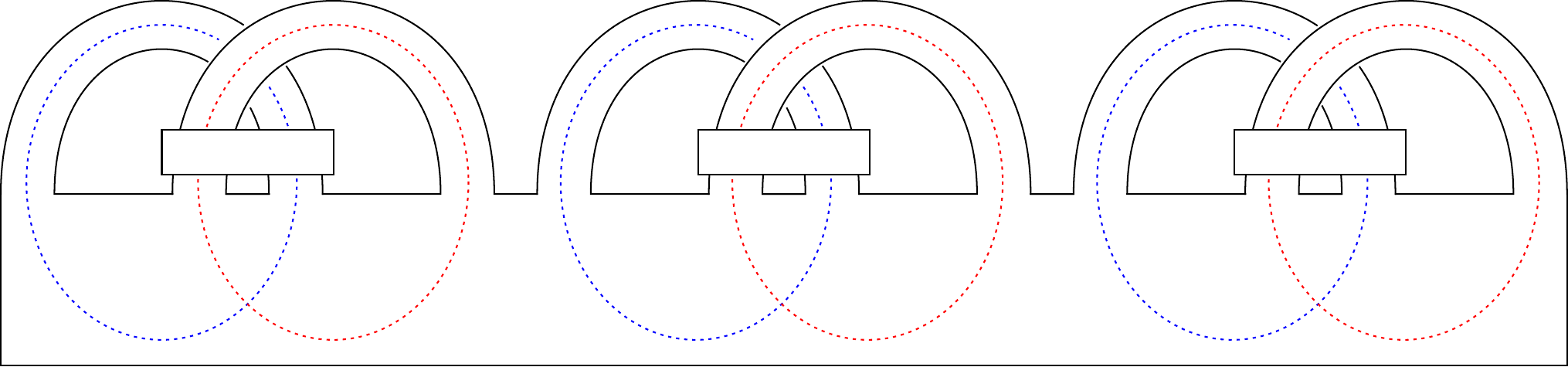}
    \caption{Disc-band form for $\Sigma$, where a solid box represents $p_i$ full twists between two bands with no twist on each of the bands. Since all triples in the figure have Milnor's triple linking number zero, to get the desired example, perform the double of a Borromean rings insertion move eight times, on the $i$th, $j$th and $k$th bands for all choices of $i \in \{1,2\}, j \in \{3,4\}$ and $k \in \{5,6\}$.}
    \label{figure:diskband2}
  \end{figure}
\end{example}

\begin{example}\label{example2}
Let $K$ be the knot shown in Figure~\ref{figure:example3}. Then we have $$X = \big(\lk(\delta_i,J^+_j)\big)_{3\times3} = \diag(p_1,p_2,p_3), A = \big(\lk(\delta_i,\delta^+_j)\big)_{3\times3} = \diag(1,-1,1).$$
For $i =1,2,3$, the curve $\varepsilon_i$ from Figure~\ref{figure:example3} has self linking number zero (i.e.\ $\lk(\varepsilon_i,\varepsilon^+_i)=0$). Suppose again that $\{p_1, p_2, p_3\}$ is a triple of integers such that $|\frac{n_i}{m_i}|>1$ and $p_i\cdot(p_i-1) \neq 0$ for $i = 1,2,3$, then by similar analysis to that in Corollary~\ref{corollary:eightmetabolisers}, and again using Proposition~\ref{proposition:possiblemetaboliser}, it is possible to deduce that there are $8$ possible metabolisers of the form
$$H = \spn\{ [L_1], [L_2], [L_3] \} \text{ where } L_i \in \{J_i,\epsilon_i\} \text{ for } i=1,2,3.$$
By the same argument as in Example~\ref{example1}, $\psi(K,P) \neq 0$ for all possible lagrangians $P \subset \AK$.

\begin{figure}[htbp]
\centering
\begin{picture}(400,140)
\put(0,0){\includegraphics[width=5.5in]{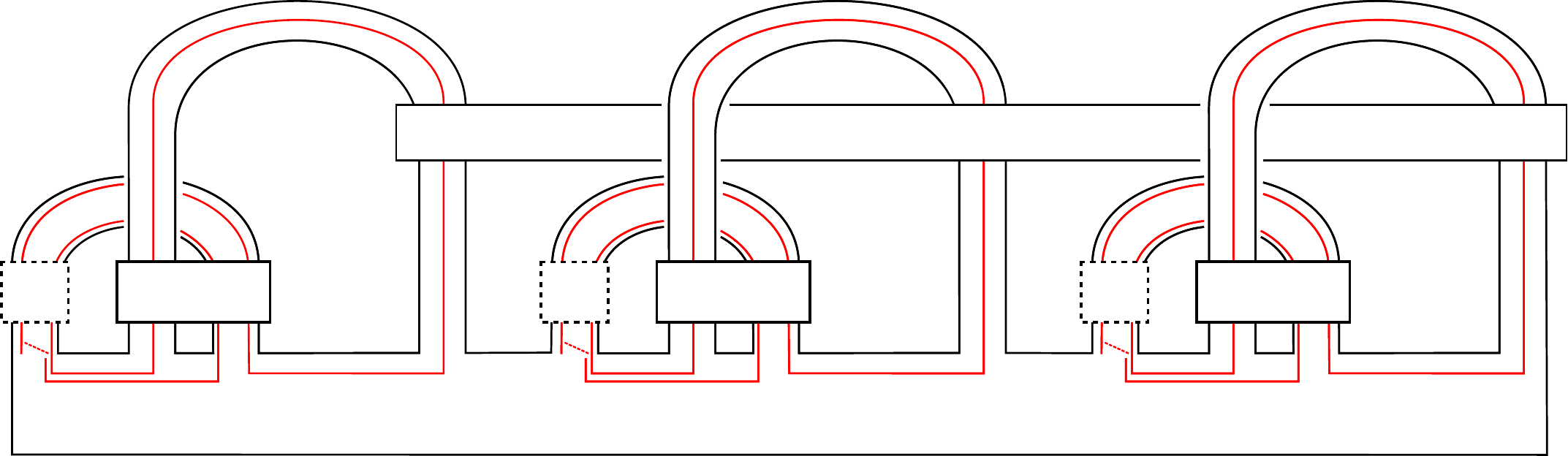}}
\put(200,79){Borromean rings}
\put(7,38){$1$}
\put(139,38){$-1$}
\put(281,38){$1$}
\put(45,39.5){$p_1$}
\put(182,39.5){$p_2$}
\put(318,39.5){$p_3$}
\put(10,6){$2p_1-1$}
\put(142,6){$2p_2-1$}
\put(284,6){$2p_3-1$}
\put(85,12){$\varepsilon_1$}
\put(220,12){$\varepsilon_2$}
\put(360,12){$\varepsilon_3$}
\end{picture}
\caption{A knot $K$, with a disc-band form for a Seifert surface $\Sigma$, where a solid box represents $p_i$ full twists between two bands with no twist on each bands, and a dotted box represents $\pm 1$ full twists between two strands. For $i\in\{1,2,3\}$, $\varepsilon_i$ is a simple closed curve that goes around $(2i-1)$-th band $2p_i-1$ times, and around the $2i$-th band $-1$ times. In the box labelled Borromean rings, the bands are tied in a string link whose closure is the Borromean rings, without introducing extra twisting.}
\label{figure:example3}
\end{figure}
\end{example}

\subsection{The doubly solvable filtration}
In this subsection we give the proof of Theorem~\ref{theorem:doubly-slice-theorem}. First, we recall the results that are previously known.  The next theorem was shown in \cite[Theorem 1.1 and Theorem 7.1]{Kim06}.

\begin{theorem}\label{thm:kim06}~
\begin{enumerate}
\item For a given integer $n\geq 1$, there exists a ribbon knot $K$ that is algebraically doubly slice, doubly $(n)$-solvable, but not doubly $(n.5)$-solvable.
\item For a given integer $n\geq 1$, there exists an algebraically doubly slice knot $K$ that is doubly $(n)$-solvable, but not $(n,n.5)$-solvable.
\end{enumerate}
\end{theorem}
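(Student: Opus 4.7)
The plan is to construct explicit families of knots via iterated satellite (genetic infection) operations, starting from a carefully chosen algebraically doubly slice ``seed'' knot, and to obstruct doubly $(n.5)$-solvability using higher-order $L^2$-signature (von Neumann $\rho$-) invariants in the spirit of \cite{COT03, CHL09}, but adapted to the doubly-solvable setting as in \cite{Kim06}.

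For part (1), first I would fix a ribbon knot $K_0$ whose Seifert form admits a pair of complementary metabolisers, giving algebraic double sliceness; a natural candidate is a knot of the form $J \# (-J)$ or a standard stevedore-type pattern arranged with paired bands. Then I would perform $2^{n}$ satellite infections along carefully chosen curves in the derived series of $\pi_1(M_{K_0})$, using auxiliary knots $\eta_i$ that are themselves $(n)$-solvable but have large integral of Levine--Tristram signatures. The symmetric placement of the infection curves, paired across the two metabolisers, preserves both ribbonness (use ribbon infection curves and ribbon auxiliaries) and algebraic double sliceness of the Seifert form. Doubly $(n)$-solvability is then exhibited by constructing two $(n)$-solutions $W_1,W_2$ simultaneously: one uses each metaboliser to cap off inside a standard slice-cobordism built from the ribbon structure of $K_0$, thickened by the trace of the satellite operations, with the infecting knots' $(n)$-solutions glued in; by construction the union $W_1 \cup_{M_K} W_2$ deformation retracts onto a space with $\pi_1 = \Z$.

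The obstruction to doubly $(n.5)$-solvability is the crucial step. Suppose $(W_1,W_2)$ were a doubly $(n.5)$-solution pair, so $V := W_1 \cup_{M_K} W_2$ has $\pi_1(V) \cong \Z$. Using the Cochran--Orr--Teichner machinery, pick a representation $\varphi \colon \pi_1(M_K) \to \Gamma$ into a poly-torsion-free-abelian group of derived length $n+1$ that factors through the quotient $\pi_1(M_K)/\pi_1(M_K)^{(n+1)}$. The condition $\pi_1(V) \cong \Z$ forces $\varphi$ to extend over at least one of $W_1,W_2$ in a controlled way, which, together with the $(n.5)$-solvability condition on the other side, forces the von Neumann $\rho$-invariant $\rho(M_K,\varphi)$ to vanish. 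By choosing the auxiliary knots $\eta_i$ so that the corresponding Cheeger--Gromov defects are large and of the correct sign (following the standard trick of picking $\eta_i$ with $\rho^{(0)}(\eta_i)$ exceeding the sum of the $L^2$-Betti number bounds from all lower-level contributions), one forces $\rho(M_K,\varphi) \neq 0$, a contradiction. For part (2), the same scheme works with the ribbon constraint dropped, and one only needs to obstruct $(n, n.5)$-solvability, which is strictly weaker than obstructing doubly $(n.5)$-solvability; this allows more flexibility in the seed and the auxiliary knots.

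The main obstacle I expect is the simultaneous bookkeeping of two solvable filtrations joined with $\pi_1 = \Z$: one must arrange the infection curves so that (a) the Alexander module and Blanchfield form are unchanged (to preserve algebraic double sliceness and ensure doubly $(n)$-solvability genuinely holds), and (b) the $\rho$-invariant calculation survives the symmetrisation — each infection contributes a signature integral that must not cancel out when one sums over the two putative $(n.5)$-solutions. The standard workaround is an ``injectivity on second-order Alexander modules'' argument (as in \cite{CHL10}) ensuring that the infection curves stay nontrivial in the relevant non-commutative localisations; carrying this through levels $1,2,\dots,n$ iteratively, while preserving the pairing between the two metabolisers, is the key technical heart of the proof.
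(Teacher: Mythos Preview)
The paper does not prove this theorem. It is stated as background, attributed explicitly to Taehee Kim via the citation ``\cite[Theorem~1.1 and Theorem~7.1]{Kim06}'', and is used only to contextualise the authors' own Theorem~A. So there is no ``paper's own proof'' to compare against; your proposal is really a sketch of what Kim's argument looks like.

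That said, your outline is broadly in the right spirit for Kim's paper: iterated genetic infection to build examples at level~$n$, and von Neumann $\rho$-invariants to obstruct $(n.5)$-solvability, with the extra bookkeeping that the two solutions must glue to a $\pi_1 = \Z$ manifold. A couple of points where your sketch is loose relative to what actually has to be done: first, the step ``$\pi_1(V)\cong\Z$ forces $\varphi$ to extend over at least one of $W_1,W_2$'' is not quite how the argument runs; rather, the $\pi_1=\Z$ condition on the union gives control over the higher-order Alexander modules of both pieces simultaneously, and one shows that the relevant Blanchfield lagrangians coming from $W_1$ and $W_2$ are complementary, which is what pins down the metaboliser and hence the representation. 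Second, preserving \emph{ribbonness} under infection (for part~(1)) requires more than ``ribbon infection curves and ribbon auxiliaries''; one needs the infection curves to bound disjoint embedded discs in the complement of a ribbon disc for the seed, which is arranged by choosing them as meridians to bands. These are fixable, but they are exactly the places where a full proof has to do real work.
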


\noindent Taehee Kim also showed that first few terms of the doubly solvable filtration are well understood~\cite[Proposition 2.8 and Proposition 2.10]{Kim06} (see also \cite[Section 7]{Cha-Kim:2016-1}, \cite{Orson:2017-1}).

\begin{proposition}\label{prop:kim06}~
\begin{enumerate}
\item For $n=0$ or $0.5$, a knot $K$ is doubly $(n)$-solvable if and only if it is $(n)$-solvable. Hence, a knot is doubly $(0)$-solvable if and only if it has vanishing $\Arf$ invariant, and doubly $(0.5)$-solvable if and only if it is algebraically slice.
\item \label{item:alg-doubly-slice} If a knot $K$ is doubly $(1)$-solvable, then $K$ is algebraically doubly slice.
\end{enumerate}
\end{proposition}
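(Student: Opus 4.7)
The plan is to handle the two parts of Proposition~\ref{prop:kim06} separately. For Part (1), the direction ``doubly $(n)$-solvable $\Rightarrow$ $(n)$-solvable'' is immediate from Definition~\ref{defn:doubly-n-solvable}, since the data of a doubly $(n)$-solution contains an $(n)$-solution. For the converse with $n\in\{0,0.5\}$, the key input is that an $(n)$-solvable knot admits an $(n)$-solution $W$ with $\pi_1(W)\cong\Z$: for $n=0.5$ this is the standard Levine-type construction of pushing a Seifert surface for an algebraically slice knot into $D^4$ and surgering a metaboliser of the Seifert form; for $n=0$ one uses that vanishing Arf invariant permits a bounding surface in $D^4$ whose complement has infinite cyclic fundamental group. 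Setting $W_1=W_2=W$ and applying Van~Kampen to $W\cup_{M_K}W$ gives
\[
\pi_1\bigl(W\cup_{M_K}W\bigr)\;\cong\;\Z*_{\pi_1(M_K)}\Z,
\]
and since both inclusions $\pi_1(M_K)\to\Z$ coincide with the abelianisation, this amalgamated free product collapses to $\Z$, exhibiting $K$ as doubly $(n)$-solvable.

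For Part (2), let $K$ be doubly $(1)$-solvable via a pair $(W_1,W_2)$ with $\pi_1(W_1\cup_{M_K}W_2)\cong\Z$. Lemma~\ref{lemma-kernel-metaboliser-for-Bl-1-solutions} produces lagrangians
\[
P_i := \ker\bigl(H_1(M_K;\Q[\Z])\to H_1(W_i;\Q[\Z])\bigr),\qquad i=1,2,
\]
for the rational Blanchfield form. Because $\pi_1(W_1\cup_{M_K}W_2)\cong\Z$, the $\Q[\Z]$-Alexander module $H_1(W_1\cup_{M_K} W_2;\Q[\Z])$ vanishes, and the Mayer--Vietoris sequence with $\Q[\Z]$ coefficients forces
\[
H_1(M_K;\Q[\Z])\longrightarrow H_1(W_1;\Q[\Z])\oplus H_1(W_2;\Q[\Z])
\]
to be surjective. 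Since each $P_i$ is half-dimensional over $\Q(t)$ as a lagrangian of the nonsingular form $\Bl$, a dimension count upgrades surjectivity to an isomorphism, yielding $P_1\oplus P_2=H_1(M_K;\Q[\Z])$.

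It remains to convert two complementary Blanchfield lagrangians into two complementary metabolisers of the Seifert form, which is precisely the content of algebraic double sliceness. Given any Seifert surface $\Sigma$ for $K$, Lemma~\ref{lemma:Lagrangian} supplies metabolisers $H_i\subset H_1(\Sigma;\Z)$ representing $P_i$ via the surjection $\Q\otimes H_1(\Sigma;\Z)\twoheadrightarrow H_1(M_K;\Q[\Z])$, whose kernel is $(t-1)\bigl(\Q\otimes H_1(\Sigma;\Z)\bigr)$. The main obstacle is this final step: the rational direct sum $P_1\oplus P_2$ on the quotient must be lifted across this kernel to a rational direct sum decomposition $\Q\otimes H_1(\Sigma;\Z)=\Q\otimes H_1\oplus\Q\otimes H_2$, and one must then check that each $H_i$ remains a Seifert form metaboliser. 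This is a careful but routine linear algebra exercise, leveraging the fact that multiplication by $(t-1)$ is invertible on the rational Alexander module (cf.\ the argument in Lemma~\ref{lemma:inner-automorphisms}). The remainder of the proof is only the Van~Kampen computation of Part (1) and the Mayer--Vietoris diagram chase of Part (2).
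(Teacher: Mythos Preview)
The paper does not prove Proposition~\ref{prop:kim06}; it is quoted from \cite[Propositions~2.8 and~2.10]{Kim06}. So there is no in-paper proof to compare against, but let me comment on the correctness of your outline.

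Your Part~(1) is fine and in fact matches the one-line justification given in the introduction (item~(ii)): once you know an $(n)$-solution $W$ with $\pi_1(W)\cong\Z$ exists, doubling it and applying Van~Kampen gives $\pi_1(W\cup_{M_K}W)\cong\Z$. For $n=0.5$ this is exactly \cite[Remark~1.3]{COT03}, as you indicate. For $n=0$ your description is slightly off (the complement of a pushed-in Seifert surface does not have boundary $M_K$; one must add $2$-handles along a full symplectic basis and then verify the resulting intersection form is hyperbolic), but the needed fact is standard.

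Your Part~(2) has a genuine gap at the last step, which you describe as ``careful but routine linear algebra.'' It is not routine. Two problems. First, the sentence about the kernel of $\Q\otimes H_1(\Sigma;\Z)\twoheadrightarrow \AK$ being ``$(t-1)\bigl(\Q\otimes H_1(\Sigma;\Z)\bigr)$'' does not make sense: $H_1(\Sigma;\Z)$ carries no $t$-action, so there is nothing to multiply by $t-1$. Second, and more seriously, even granting the rational decomposition $P_1\oplus P_2=\AK$, Lemma~\ref{lemma:Lagrangian} only hands you \emph{some} metabolisers $H_1,H_2$ representing $P_1,P_2$; it says nothing about their relative position inside $H_1(\Sigma;\Z)$. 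When $\deg\Delta_K<2g$ the map $\Q\otimes H_1(\Sigma)\to\AK$ has a nontrivial kernel, and $H_1,H_2$ can overlap there. Even when $\deg\Delta_K=2g$, complementarity over $\Q$ gives only that $H_1\oplus H_2$ has finite index in $H_1(\Sigma;\Z)$, not that it equals it. Algebraic double sliceness is an \emph{integral} condition on the Seifert form, and your argument only reaches the rational Blanchfield form.

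The route taken in \cite{Kim06} (and recorded in this paper as Lemma~\ref{lemma:kim06}) is to establish the \emph{integral} splitting $H_1(M_K;\Z[\Z])\cong H_1(W_1;\Z[\Z])\oplus H_1(W_2;\Z[\Z])$ directly; this requires controlling the connecting map from $H_2$ in the Mayer--Vietoris sequence, not just a dimension count. From there one invokes the classical equivalence between hyperbolicity of the integral Blanchfield form and hyperbolicity of the Seifert form. Your Mayer--Vietoris argument gives surjectivity; the missing content is injectivity over $\Z[\Z]$, together with the Seifert--Blanchfield correspondence.
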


Theorem~\ref{theorem:doubly-slice-theorem} shows that (a weaker form of) the converse of Proposition~\ref{prop:kim06}~(\ref{item:alg-doubly-slice}) does not hold. Theorem~\ref{theorem:doubly-slice-theorem} is analogous to  Theorem~\ref{thm:kim06} for the base case of the ``other half'' of the filtration. We recall the statement of Theorem~\ref{theorem:doubly-slice-theorem} for the convenience of the reader.

\medskip
\begin{theoremA}~
\begin{enumerate}[(a)]
{\setlength\itemindent{15pt}\item\label{item:thmA-a} There exists a ribbon knot that is algebraically doubly slice, but not doubly $(1)$-solvable.}
{\setlength\itemindent{15pt}\item\label{item:thmA-b} There exists a knot that is algebraically doubly slice, but not $(0.5,1)$-solvable.}
\end{enumerate}
In particular, neither knot is doubly slice.
\end{theoremA}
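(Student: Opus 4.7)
The plan is to prove Theorem~A by taking the explicit knots constructed in Examples~\ref{example1} and~\ref{example2}, verifying the algebraic conditions demanded by each part, and applying the obstruction $\psi(K,P)$ to rule out the relevant membership in the doubly solvable filtration.

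For part~\eqref{item:thmA-a}, I would take $K$ to be the knot of Example~\ref{example1} for a triple $(p_1,p_2,p_3)$ with $p_i(p_i-1)\neq 0$ and $|n_i/m_i|>1$ for $i\in\{1,2,3,4\}$. First I would verify that $K$ is ribbon: the base knot (before Borromean insertion) has Seifert form $\begin{pmatrix} 0 & X \\ X-I & 0\end{pmatrix}$ and arises as the boundary of a symmetric plumbing of three twisted bands, which is manifestly ribbon; the ``double'' in the double Borromean rings insertion is chosen so that each infection curve bounds a ribbon disc in the complement of the existing ribbon disc, so ribbon-ness is preserved. Next I would verify that $K$ is algebraically doubly slice: since $A=0$ and the lower-right block of the Seifert form vanishes, both $H_J:=\spn\{[J_1],[J_2],[J_3]\}$ and $H_\delta:=\spn\{[\delta_1],[\delta_2],[\delta_3]\}$ are metabolisers of $\beta_\Sigma$, and together they decompose $H_1(\Sigma;\Z)$ as complementary summands. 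Finally I would invoke Example~\ref{example1}, which by Corollary~\ref{corollary:eightmetabolisers}, Lemma~\ref{lemma:Lagrangian} and Corollary~\ref{corollary:TFAE} establishes $\psi(K,P)\neq 0$ for every lagrangian $P\subset\AK$: each of the eight candidate derivatives $L=L_1\cup L_2\cup L_3$ with $L_i\in\{J_i,\delta_i\}$ is engineered so that $\bar\mu_L(123)=1$, which is not $\equiv 0$ modulo any of $n_i/m_i$. Theorem~\ref{theorem:doubly-slice-obstruction} then precludes $K\in\mathcal{F}_{1,1}$.

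For part~\eqref{item:thmA-b}, I would take $K$ to be the knot of Example~\ref{example2}. This $K$ is algebraically doubly slice because $\spn\{[\varepsilon_1],[\varepsilon_2],[\varepsilon_3]\}$ is a metaboliser complementary (over $\Q$) to $\spn\{[J_1],[J_2],[J_3]\}$: the $\varepsilon_i$ have pairwise vanishing Seifert pairings by the self-linking hypothesis $\lk(\varepsilon_i,\varepsilon_i^+)=0$ combined with the diagonal form of $A$, and a rational change of basis shows that the two spans decompose $H_1(\Sigma;\Q)$. An eight-metaboliser analysis identical to the one in Example~\ref{example1}, now taken over the sets $L_i \in \{J_i,\varepsilon_i\}$, yields $\psi(K,P)\neq 0$ for every lagrangian $P$, so Theorem~\ref{theorem:doubly-slice-obstructionhalf} gives $K\notin\mathcal{F}_{0.5,1}$. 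The final ``in particular'' clause follows because every doubly slice knot lies in $\mathcal{F}_{1,1}\subset\mathcal{F}_{0.5,1}$.

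The main obstacle I anticipate is the ribbon verification in part~\eqref{item:thmA-a}: although the base knot is visibly ribbon, the Borromean rings are themselves not slice and carry nonzero Milnor triple linking, so the insertion must be performed in a symmetric ``doubled'' pattern that extends the pre-existing ribbon disc over each doubled tangle. The remaining tasks---checking algebraic double sliceness, identifying the eight metabolisers in both examples, and computing the Milnor triple linking numbers of all candidate derivatives---are routine given the machinery assembled in Sections~\ref{Ribbon Obstruction}--\ref{section:possiblemilnor}, and the existence of integer triples $(p_1,p_2,p_3)$ satisfying all the required hypotheses can be verified by elementary number theory.
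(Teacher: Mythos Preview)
Your approach to part~(\ref{item:thmA-a}) contains a genuine error: the full knot of Example~\ref{example1}, with all eight Borromean insertions, \emph{cannot} be ribbon. You have already established that $\psi(K,P)\neq 0$ for every lagrangian $P$; but a ribbon knot is homotopy ribbon, hence by Lemma~\ref{lemma:homotopyribbonsol} homology ribbon $(1)$-solvable, and then Theorem~\ref{theorem:obstruction} forces $\psi(K,P)=0$ for at least one lagrangian. Your own obstruction therefore proves that this knot is not ribbon, so the ``doubled insertion preserves ribbon-ness'' argument must fail. The word ``double'' in the insertion move refers to the geometric form of the string-link infection, not to any symmetry that would extend a slice disc.

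The paper resolves this by a different construction: it takes the knot of Example~\ref{example1} but \emph{omits} the Borromean insertion on the three $\delta$-bands, so that the derivative $\delta_1\cup\delta_2\cup\delta_3$ remains an unlink and $K$ is visibly ribbon. Now exactly one metaboliser, namely $\spn\{[\delta_1],[\delta_2],[\delta_3]\}$, has $\psi(K,P_0)=0$, and the other seven do not. This is consistent with ribbon-ness but still rules out $K\in\mathcal{F}_{1,1}$, because Theorem~\ref{theorem:doubly-slice-obstruction} demands \emph{two} complementary lagrangians with vanishing $\psi$. This asymmetry between the hypotheses of Theorems~\ref{ribbon-obstruction-theorem} and~\ref{theorem:doubly-slice-obstruction} is the mechanism that makes part~(\ref{item:thmA-a}) work.

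For part~(\ref{item:thmA-b}) the paper simply uses the full Example~\ref{example1} knot, for which $A=0$ makes algebraic double sliceness immediate and Theorem~\ref{theorem:doubly-slice-obstructionhalf} applies directly. Your choice of Example~\ref{example2} is workable in spirit, but your verification of algebraic double sliceness is incomplete: since $[\varepsilon_i]=(2p_i-1)[\delta_i]-[J_i]$, the spans $\spn\{[J_i]\}$ and $\spn\{[\varepsilon_i]\}$ are complementary over $\Q$ but not over $\Z$ unless $2p_i-1=\pm 1$, which is excluded.
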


\begin{proof}
For part (\ref{item:thmA-a}), let $K$ be the knot from Example~\ref{example1}, for some choice of $p_1$, $p_2$ and $p_3$,
 except that we do not tie Borromean rings into the $1$st and $3$rd and $5$th bands (that is when $L_i = \delta_i$ for $i=1,2,3$). The derivative $\delta_1 \cup \delta_2 \cup \delta_3$ of $K$ is an unlink, which implies that $K$ is a ribbon knot. The Seifert form with respect to the given basis is \[\bp 0 & X \\ X - \Id & 0 \ep,\] so $K$ is algebraically doubly slice, and Proposition~\ref{prop:kim06}~$(1)$ implies that $K$ is doubly $(0.5)$-solvable. We observed in Example~\ref{example1} that $K$ has eight possible metabolisers, and exactly one of the metabolisers, namely $\spn\{ [\delta_1], [\delta_2],[\delta_3] \}$, represents a lagrangian $P_0$ with respect to which $\psi(K,P_0) = 0$, using Corollary~\ref{corollary:TFAE}. If $K$ were doubly $(1)$-solvable, then by Theorem~\ref{theorem:doubly-slice-obstruction}, there would exist two lagrangians $P_1$ and $P_2$ for the rational Blanchfield form, such that $P_1 \oplus P_2 = H_1(M_K;\Q[\Z])$ and $\psi(K,P_1) = \psi(K,P_2) = 0$. This contradicts the statement above that there is exactly one lagrangian $P$ with $\psi(K,P)=0$. This concludes the proof of part (\ref{item:thmA-a}).

For the second part, let $K$ be the knot from Example~\ref{example1}. For the same reason as above, $K$ is algebraically doubly slice and doubly $(0.5)$-solvable. If $K$ were $(0.5,1)$-solvable, then by Theorem~\ref{theorem:doubly-slice-obstructionhalf}, there would exist a lagrangian $P$ for the rational Blanchfield form such that $\psi(K,P) = 0$.
 This is not possible, since we checked in Example~\ref{example1} that $\psi(K,P) \neq 0$ for all possible lagrangians $P$.  This concludes the proof of part~(\ref{item:thmA-b}) and therefore of the theorem.
\end{proof}

\noindent We end this section with the following observations.
\medskip

\begin{lemma}~\label{lemma:0.5ribbon}
A ribbon knot $K$ is $(0.5,n)$-solvable for all $n\in \frac{1}{2}\mathbb{N}_0$.
\end{lemma}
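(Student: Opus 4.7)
The plan is to exhibit, for any $n \in \tfrac12\mathbb{N}_0$, a $(0.5)$-solution $V$ and an $(n)$-solution $W$ bounded by $M_K$ such that $\pi_1(V \cup_{M_K} W) \cong \mathbb{Z}$.

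For the $(n)$-solution, I would take $W := D^4 \sm \nu D$, the exterior of the ribbon disc $D$. This is standard to verify: $H_1(W;\Z) \cong \Z$ and $H_2(W;\Z) = 0$ (so the $(n)$-solution condition holds vacuously with an empty basis), and the ribbon property gives $\pi_1(M_K) \twoheadrightarrow \pi_1(W)$.

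For the $(0.5)$-solution, I would invoke the fact, stated in the introduction (fact (ii)) and due to Cochran-Orr-Teichner, that every algebraically slice knot admits a $(0.5)$-solution $V$ with $\pi_1(V) \cong \Z$. Since a ribbon knot is algebraically slice, such a $V$ exists for $K$.

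The remaining work is a van Kampen computation to show $\pi_1(V \cup_{M_K} W) \cong \Z$. Write $A := \pi_1(M_K)$, $B := \pi_1(V) = \Z$, $C := \pi_1(W)$, with maps $\phi_B \colon A \to B$ and $\phi_C \colon A \to C$ induced by inclusion. The map $\phi_B$ is the abelianisation (its image is $H_1(V)\cong\Z$ and its target is abelian), so in particular $\phi_B$ is surjective with kernel $[A,A]$. The map $\phi_C$ is surjective by the ribbon hypothesis on $W$. Hence both maps are surjective and van Kampen gives
\[
\pi_1(V \cup_{M_K} W) \;\cong\; B *_A C \;\cong\; A / N(\ker \phi_B \cup \ker \phi_C),
\]
where $N(\cdot)$ denotes normal closure. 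Since $\ker\phi_B = [A,A]$ is already being killed, the quotient is abelian and therefore equals the abelianisation of $A/\ker\phi_C = C$, that is, $H_1(W;\Z) \cong \Z$. This gives $\pi_1(V \cup_{M_K} W) \cong \Z$, so $(V,W)$ is a $(0.5,n)$-solution pair for every $n$.

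The only subtle point is the surjectivity $\phi_C \colon \pi_1(M_K) \twoheadrightarrow \pi_1(W)$; this is where the ribbon (equivalently, homotopy ribbon) hypothesis is used. Everything else is structural: the $(n)$-solution property of a slice disc exterior and the existence of a $(0.5)$-solution with $\Z$ fundamental group for any algebraically slice knot have both been recorded earlier in the paper.
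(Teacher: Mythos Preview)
Your proof is correct and follows essentially the same approach as the paper: take the ribbon disc exterior as the $(n)$-solution for all $n$ and a $(0.5)$-solution with fundamental group $\Z$, then apply Seifert--van Kampen. The paper simply asserts the van Kampen computation is ``straightforward,'' whereas you spell out the details; your argument that the pushout is $H_1(W;\Z)\cong\Z$ because $\ker\phi_B=[A,A]$ forces the quotient to be the abelianisation of $C$ is exactly the content of that omitted step.
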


\begin{proof}
Since $K$ is $(0.5)$-solvable there exists a $(0.5)$-solution $W_{0.5}$ with $\pi_1(W_{0.5})=\mathbb{Z}$~\cite[Remark 1.3]{COT03}. Let $W_R$ be the ribbon disc complement for $K$. Then by the Seifert-van Kampen theorem, it is straightforward to conclude that $\pi_1(W_{0.5}\cup_{M_K}W_R) = \mathbb{Z}$. Therefore $K$ is $(0.5,n)$-solvable for all $n\in \frac{1}{2}\mathbb{N}_0$.
\end{proof}

\begin{corollary}
There exists a knot $K$ that is algebraically doubly slice and $(0.5,n)$-solvable for all $n\in \frac{1}{2}\mathbb{N}_0$, but is not doubly $(1)$-solvable. \end{corollary}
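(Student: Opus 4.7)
The plan is to simply combine Theorem~A(\ref{item:thmA-a}) with Lemma~\ref{lemma:0.5ribbon}. Recall that Theorem~A(\ref{item:thmA-a}) produces a specific ribbon knot $K$ (constructed from Example~\ref{example1} by omitting the Borromean infections on the $\delta_i$ bands) that is algebraically doubly slice but not doubly $(1)$-solvable. This knot is therefore already a candidate; the only remaining ingredient is the $(0.5,n)$-solvability for all $n \in \tfrac{1}{2}\mathbb{N}_0$.

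First I would take $K$ to be the ribbon knot from the proof of Theorem~A(\ref{item:thmA-a}). By that theorem, $K$ is algebraically doubly slice and not doubly $(1)$-solvable. Then I would invoke Lemma~\ref{lemma:0.5ribbon}, which states that every ribbon knot is $(0.5,n)$-solvable for all $n \in \tfrac{1}{2}\mathbb{N}_0$, to conclude the required solvability property. Putting these facts together gives the corollary directly.

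There is really no obstacle here: the corollary is a bookkeeping consequence of results already established in the section. The content lives entirely in Theorem~A(\ref{item:thmA-a}), whose proof used the obstruction $\psi(K,P)$ together with Theorem~\ref{theorem:doubly-slice-obstruction} to rule out double $(1)$-solvability, and in Lemma~\ref{lemma:0.5ribbon}, whose proof uses a Seifert--van Kampen computation showing that the union of a $(0.5)$-solution with $\pi_1 = \Z$ and the ribbon disc complement has fundamental group $\Z$. The corollary just packages both conclusions for the same knot.
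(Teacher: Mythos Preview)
Your proposal is correct and matches the paper's own proof essentially verbatim: take the ribbon knot from Theorem~A(\ref{item-theorem-A-1}), invoke Lemma~\ref{lemma:0.5ribbon} for the $(0.5,n)$-solvability, and cite Theorem~A for the failure of doubly $(1)$-solvability.
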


\begin{proof}
Let $K$ be a ribbon knot from Theorem~\ref{theorem:doubly-slice-theorem}~(\ref{item-theorem-A-1}). Then by Lemma~\ref{lemma:0.5ribbon}, $K$ is $(0.5,n)$-solvable for all $n\in \frac{1}{2}\mathbb{N}_0$. We proved that $K$ is not doubly $(1)$-solvable in Theorem~\ref{theorem:doubly-slice-theorem}.
\end{proof}

\subsection{Algebraically slice knots with potentially interesting properties}

In this subsection, we investigate some properties of knots that are algebraically slice and have non-vanishing $\mathbb{Z}[\mathbb{Z}]$ homology ribbon obstruction.   First we define what is means for a knot to be homotopy ribbon $(n)$-solvable, and then we recall a generalised version of the Kauffman conjecture. We show that there exists an algebraically slice knot $K$ that is not homotopy ribbon $(1)$-solvable and does not have any $(0)$-solvable derivative. At the end of the section, we present some interesting properties of a set, denoted $\mathcal{S}$, of algebraically slice knots that do not have vanishing $\Z[\Z]$ homology ribbon obstruction (see Definition~\ref{defn:hrn-solvable}).

Motivated by the definition of a homotopy ribbon knot, we can define the following analogous definition for the solvable filtration. It is not known whether every $(n)$-solvable knot is homotopy ribbon $(n)$-solvable or not.

\medskip
\begin{definition}[{Homotopy ribbon $(n)$-solvable}]~\label{defn:hrn-solvable}
 We say that a knot $K$ is \emph{homotopy ribbon $(n)$-solvable} for $n \in \frac{1}{2}\mathbb{N}_0$ if the zero-framed surgery manifold $M_K$ is the boundary of an $(n)$-solution $W$ such that the inclusion induced map $\pi_1(M_K) \to \pi_1(W)$ is surjective.
\end{definition}

We note that a knot $K'$ concordant to $K$ need not be homotopy ribbon $(n)$-solvable even if $K$ is, just as the ordinary homotopy ribbon property need not be preserved under concordance.

We recall an open problem.  Note that if a knot has a slice derivative then the knot itself is a slice knot. It is natural to ask if the converse is true as follows (see also \cite[Conjecture 7.2]{CD14}).

\medskip
\begin{conjecture}[{Generalised version of the Kauffman Conjecture}]~\label{conj:genKauffman}
If $K$ is a topologically (resp.\ smoothly) slice knot, then there exist a topologically (resp.\ smoothly) slice derivative of $K$.
\end{conjecture}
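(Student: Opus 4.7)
The plan is to attempt the following natural construction. Given a slice knot $K$ with slicing disc $D \subset D^4$ and a Seifert surface $\Sigma \subset S^3$ for $K$, push $\Sigma$ slightly into $D^4$ so that together with $D$ it forms a closed surface $F = \Sigma \cup_K D \subset D^4$ of genus $g$. One then seeks a symplectic half-basis of simple closed curves $\{b_1,\dots,b_g\} \subset \Sigma$ that bound disjointly embedded (locally flat or smooth, as appropriate) discs in $D^4 \sm \nu D$. Such a collection of discs would exhibit the corresponding derivative of $K$ on $\Sigma$ as a slice link, which is exactly what the conjecture demands.

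First I would verify that the strategy succeeds in the ribbon case, both to sanity-check the setup and to identify the structure that is really being used. When $D$ is a ribbon disc, the absence of local maxima of the radial function on $D$ lets one isotope $\Sigma$ into $D^4$ so that half of a symplectic collection of bands surgers away naturally along the ribbon saddles, producing an unlinked derivative; this is the classical observation underlying \cite[Corollary~7.4]{CD14}. What one is really exploiting is a handle decomposition of $D$ relative to $\Sigma$ with no index-three critical points, giving an explicit recipe for the required discs.

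The main obstacle is that for a general, non-ribbon, slicing disc no such structure is available. The natural first-order attempt would be to start from the lagrangian $P \subset \AK$ associated to $D$ via the proof of Theorem~\ref{theorem:obstruction}, choose a metaboliser representing $P$ by Lemma~\ref{lemma:Lagrangian}, and argue that the resulting curves bound disjoint discs in $D^4 \sm \nu D$. However, the standard relative-homology arguments only produce disjoint surfaces of possibly positive genus, and promoting these to embedded discs requires Whitney-type cancellation which is unavailable without $\pi_1$-negligibility hypotheses on the slice disc exterior. Moreover, the obstructions developed in this paper already manufacture candidate counterexamples: by Theorem~\ref{thm:any-seifert-surface-non-trivial-mu-123} there are algebraically slice knots every derivative of which has a nonvanishing Milnor triple linking number, and hence no $(0)$-solvable, much less slice, derivative. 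Therefore the honest plan is really to attempt to slice the knots of Theorem~\ref{thm:any-seifert-surface-non-trivial-mu-123}, expecting either to produce an explicit counterexample to Conjecture~\ref{conj:genKauffman} or, in the course of failing, to unearth the structural principle that would allow a general proof.
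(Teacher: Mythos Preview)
The statement you were asked to prove is a \emph{conjecture}, not a theorem: the paper presents Conjecture~\ref{conj:genKauffman} explicitly as an open problem and offers no proof. Indeed, the surrounding text emphasises that the question of whether every (smoothly) slice knot admits a (smoothly) slice derivative remains unresolved, and the paper's main contribution in this direction is to construct candidate counterexamples (the knots of Theorem~\ref{thm:any-seifert-surface-non-trivial-mu-123}) rather than to settle the conjecture either way.

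Your proposal is therefore not a proof, and you clearly recognise this: what you have written is an informed research outline rather than an argument. Your discussion is accurate --- the ribbon case does go through for the reason you give, the general case is blocked precisely by the failure of Whitney moves in the absence of $\pi_1$-control, and the knots of Theorem~\ref{thm:any-seifert-surface-non-trivial-mu-123} are exactly the candidates the paper puts forward. But you should be explicit that no proof is being offered, because none exists in the paper and none is expected: this is one of the central open problems the article is organised around.
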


As mentioned in the introduction, every ribbon knot has a Seifert surface with an unlinked derivative. Hence if slice-ribbon conjecture holds, then the smooth version of Conjecture~\ref{conj:genKauffman} also holds. In \cite{CD14}, Cochran and Davis found a smoothly slice knot $R$, where $R$ has a unique minimal genus one Seifert surface $F$, but there does not exist any slice derivative on $F$. However, the smoothly slice knot $R$ in \cite{CD14} can be shown to be ribbon by finding a ribbon derivative after stabilising the Seifert surface $F$. It is also known that if a knot has an $(n)$-solvable derivative then the knot itself is $(n+1)$-solvable~\cite[Theorem 8.9]{COT03}. We ask whether the converse is true cf.~\cite[Conjecture 1.4]{CD15}).

\medskip
\begin{conjecture}[{$(n)$-solvable Kauffman Conjecture}]~\label{conj:ngenKauffman}
For all $n\in\frac{1}{2}\mathbb{N}_0$, if $K$ is $(n+1)$-solvable, then there exist a $(n)$-solvable derivative of $K$.
\end{conjecture}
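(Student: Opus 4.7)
The plan is to proceed by induction on $n$, with the base case $n=0$ already being substantive. For that case the claim becomes: if $K$ is $(1)$-solvable then $K$ admits a derivative whose pairwise linking numbers and Milnor triple linking numbers all vanish. Given a $(1)$-solution $W$ for $K$, Lemma~\ref{lemma-kernel-metaboliser-for-Bl-1-solutions} produces a lagrangian $P$ for the rational Blanchfield form as $P = \ker(H_1(M_K;\Q[\Z]) \to H_1(W;\Q[\Z]))$, and Lemma~\ref{lemma:Lagrangian} represents $P$ by a metaboliser $H$ of the Seifert form on any given Seifert surface. Every derivative $J$ associated to $H$ automatically has vanishing pairwise linking numbers, so the remaining content is to arrange $\bar\mu_J(ijk)=0$ for every triple $\{i,j,k\}$.

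I would then try to upgrade $W$ to a homology ribbon $(1)$-solution in order to apply Theorem~\ref{theorem:obstruction} and conclude $\psi(K,P)=0$. Combined with Theorem~\ref{theorem:milnor-and-obstruction} and Corollary~\ref{corollary:TFAE}, this yields modular constraints on the triple linking numbers of any derivative representing $P$. To pass from modular vanishing to exact vanishing, I would combine this with the derivative-shifting operation of Theorem~\ref{theorem:inclusion}, which moves $\bar\mu_J(ijk)$ by multiples of $\det(X)-\det(X-I)$. In the favourable situation $m=1$ of Corollary~\ref{corollary:TFAE}, these ingredients together would force $\bar\mu_J(ijk) = 0$, yielding a $(0)$-solvable derivative.

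For the inductive step from $n$ to $n+1$, the natural plan is to build higher-order analogues of $\psi(K,P)$ from deeper quotients of the derived series $\pi_1(M_K)^{(k)}$, paired with higher Milnor invariants of the derivative, in the spirit of the Cochran--Orr--Teichner obstruction theory. Given an $(n+2)$-solution one would extract a tower of representations and associated lagrangians, and use the fundamental cobordism of Lemma~\ref{lemma:properties-of-E} to transport information from $M_K$ down to the level of~$M_J$, hoping to realise the vanishing of the appropriate higher Milnor invariants on some derivative.

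The main obstacle, and the reason this remains a conjecture, is at least threefold. First, it is not known whether every $(n+1)$-solvable knot is homology ribbon $(n+1)$-solvable --- the paper explicitly flags this open problem even in the case $n=0$ --- and without that upgrade the $\psi$-type obstructions cannot be plugged in. Second, even once $\psi(K,P)=0$, one obtains only a congruence on $\bar\mu_J(ijk)$ modulo $n/m$, while the shift in Theorem~\ref{theorem:inclusion} covers only multiples of $n$; matching these up requires the restrictive hypothesis $m=1$, so outside very special Seifert forms there is a real gap. Third, the higher-order analogues needed for the inductive step must be sensitive enough to control the freedom one has to vary simultaneously the Seifert surface, the metaboliser, and the representing curves, and at present there is no general mechanism that converts the vanishing of such a group-homology obstruction into a concrete construction of a derivative with prescribed Milnor invariants; bridging that gap is, I expect, the main difficulty.
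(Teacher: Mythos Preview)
The statement in question is a \emph{conjecture}, not a theorem: the paper explicitly says ``We do not have a counter example for Conjecture~\ref{conj:ngenKauffman}'' and offers no proof. So there is nothing in the paper to compare your argument against, and no proof is expected.

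Your proposal is appropriate in that it does not actually claim to prove the conjecture; you sketch a plausible line of attack and then honestly enumerate the obstacles. Those obstacles are real. In particular: (i) the passage from $(1)$-solvable to homology ribbon $(1)$-solvable is exactly the gap flagged in Remark~\ref{remark:obstruction}, and without it $\psi(K,P)$ need not vanish; (ii) even granting $\psi(K,P)=0$, Corollary~\ref{corollary:TFAE} only applies under the very restrictive diagonal Seifert form hypotheses of Section~\ref{section:possiblemilnor}, not to arbitrary knots; and (iii) vanishing triple linking numbers alone do not make a link $(0)$-solvable --- one also needs the Arf invariants of the components and the Sato--Levine invariants to vanish (see e.g.\ \cite{Mar15}), so the base case would require more than you have written even in the best scenario. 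Your identification of the essential difficulties is sound; just be aware that the gap between ``$\psi$ vanishes'' and ``some derivative is $(0)$-solvable'' is wider than controlling $\bar\mu(ijk)$ alone.
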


We do not have a counter example for Conjecture~\ref{conj:ngenKauffman}. But we show that the knot from Example~\ref{example1} has the following interesting property: whether or not this knot is $(1)$-solvable, it is not possible to show that it is $(1)$-solvable by finding a $(0)$-solvable derivative.

\begin{theoremC}
There exist an algebraically slice knot $K$ that is not homotopy ribbon $(1)$-solvable and does not have any $(0)$-solvable derivative.
\end{theoremC}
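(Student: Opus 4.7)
The plan is to take $K$ to be either of the knots constructed in Example~\ref{example1} or Example~\ref{example2}, with $p_1,p_2,p_3$ chosen so that $|n_i/m_i|>1$ for each of the four relevant values of $i$ and so that $p_i(p_i-1)\neq 0$. Two properties of $K$ inherited from the example do all the work: $K$ is algebraically slice, because the Seifert form has the block shape $\begin{pmatrix}A & X \\ X-\Id & 0\end{pmatrix}$ and hence $\spn\{J_1,J_2,J_3\}$ is a metaboliser; and crucially, $\psi(K,P)\neq 0$ for \emph{every} lagrangian $P\subset\AK$ of the rational Blanchfield form.

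With these in hand I would deduce the first conclusion --- that $K$ is not homotopy ribbon $(1)$-solvable --- by contradiction. If $K$ were homotopy ribbon $(1)$-solvable, Lemma~\ref{lemma:homotopyribbonsol} would upgrade this to $\Z[\Z]$-homology ribbon $(1)$-solvability, and then Theorem~\ref{theorem:obstruction} would supply a lagrangian $P$ with $\psi(K,P)=0$, contradicting the universal non-vanishing established in the example.

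For the second conclusion, I would argue again by contradiction. Suppose $K$ admits a $(0)$-solvable derivative $J$ on some Seifert surface $\Sigma'$, associated to some metaboliser $H'\subset H_1(\Sigma';\Z)$. Because $(0)$-solvable links have all triple Milnor invariants vanishing, $\bar{\mu}_J(ijk)=0$ for every triple of indices. Letting $P'\subset\AK$ denote the lagrangian represented by $H'$ (using Definition~\ref{definition:Lagrangian}, and noting that such a $P'$ exists for any metaboliser on any Seifert surface), Theorem~\ref{thm:vanishesiftriple} then forces $\psi(K,P')=0$, again contradicting the universal non-vanishing. The final ``in particular'' clause of the theorem comes from the same ingredient applied contrapositively: for any derivative $J$ on any Seifert surface associated to a metaboliser representing a lagrangian $P$, the non-vanishing $\psi(K,P)\neq 0$ rules out the possibility that every $\bar{\mu}_J(ijk)$ is zero.

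The real engine of the argument is the assertion that $\psi(K,P)\neq 0$ for \emph{every} lagrangian, not just for a preferred one, and this is the principal obstacle. It has already been paid for inside Example~\ref{example1} / Example~\ref{example2}, relying on Lemma~\ref{lemma:Lagrangian} (every lagrangian is represented by some metaboliser on the given Seifert surface), the complete enumeration of metabolisers via Proposition~\ref{proposition:possiblemetaboliser} and Corollary~\ref{corollary:eightmetabolisers}, and the Milnor-invariant criterion of Corollary~\ref{corollary:TFAE}. Given that groundwork, Theorem~\ref{thm:any-seifert-surface-non-trivial-mu-123} reduces to packaging Theorem~\ref{theorem:obstruction} with Theorem~\ref{thm:vanishesiftriple}.
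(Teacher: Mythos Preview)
Your proposal is correct and follows essentially the same approach as the paper: take $K$ from Example~\ref{example1}, use Lemma~\ref{lemma:homotopyribbonsol} together with Theorem~\ref{theorem:obstruction} to rule out homotopy ribbon $(1)$-solvability, and use the vanishing of triple linking numbers for $(0)$-solvable links together with Theorem~\ref{thm:vanishesiftriple} to rule out any $(0)$-solvable derivative. The paper's proof is slightly terser but the logical structure and the ingredients invoked are the same.
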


%
%
%
%

\begin{proof}[Proof of Theorem~\ref{thm:any-seifert-surface-non-trivial-mu-123}]
 Let $K$ be the knot from Example~\ref{example1}. Note that every homotopy ribbon $(1)$-solvable knot is homology ribbon $(1)$-solvable (see Lemma~\ref{lemma:homotopyribbonsol}). Hence by Theorem~\ref{theorem:obstruction}, $K$ is not a homotopy ribbon $(1)$-solvable knot.  Now, suppose that $K$ has a $(0)$-solvable derivative~$J$ with $m$ components. Then $M_J$ bounds over $\Z^m$ and so $\bar{\mu}_{J}(ijk)=0$ for any subset $\{i,j,k\}$ of the indexing set for the components of $J$ cf.~\cite{Mar15}. However $\psi(K,P) \neq 0$ for all lagrangians, which by Theorem~\ref{thm:vanishesiftriple} implies that $\bar{\mu}_{J}(ijk) \neq 0$ for some triple $(ijk)$.
\end{proof}

\noindent We end this section by presenting some interesting properties of the following set.

\begin{definition}\label{defn:set-mathcal-S}
Let $\mathcal{S}$ be the set of all algebraically slice knots $K$ such that the invariant $\psi(K,P) \neq 0$ for all possible lagrangians $P \subset \AK$.
\end{definition}

Recall that there is a bipolar filtration of $\mathcal{C}$, defined by Cochran, Harvey and Horn in \cite{CHHo13}, that generalises the notion of positivity from \cite{CG88}. We refer to \cite{CHHo13} for the definition and detailed discussion.

In the upcoming proposition, $\tau$ denotes the concordance invariant of Ozv\'ath-Szab\'o \cite{OzSz03}, $s$ denotes the concordance invariant of Rasmussen \cite{Ra10}, $d_1$ denotes the concordance invariant of Peters \cite{Pe10} where $d_1(K)=d(S_1^3(K))$ is the correction term defined by Ozv\'ath-Szab\'o \cite{OzSz03b}, $S_1^3(K)$ denotes the one-framed surgery on $S^3$ along~$K$, $\nu^+$ denotes the concordance invariant of Hom-Wu \cite{HomWu16}, and finally $\Upsilon$ denotes the concordance invariant of Ozv\'ath-Stipsicz-Szab\'o \cite{OSS17}. Note that all the above invariants obstruct a knot from being smoothly slice, and indeed from being $0$-bipolar.

\begin{proposition}\label{prop:propertyS}
Let $\mathcal{S}$ be the set of knots from Definition~\ref{defn:set-mathcal-S}.
\begin{enumerate}
\item\label{propertyS:item1} There are infinitely many concordance classes of knots in $\mathcal{S}$.
\item\label{propertyS:item2} For any $K \in \mathcal{S}$, $K$ does not have a $(0)$-solvable derivative. In particular, no derivative of $K$ is topologically slice.
\item\label{propertyS:item3} For any $K \in \mathcal{S}$, $K$ is not homotopy ribbon $(1)$-solvable. In particular, $K$ is not homotopy ribbon.
\item\label{propertyS:item4} There exists $K \in \mathcal{S}$ such that $K \notin \mathcal{F}_{1.5}$
\item \label{propertyS:item5}There exists a knot $K \in \mathcal{S}$ such that $K \in \mathcal{B}_0$ where $\mathcal{B}_0$ is the set of $0$-bipolar knots. In particular, $\tau(K)=s(K)=d_1(K)=\varepsilon(K)=\nu^+(K)=\Upsilon(K)=0$.
\item\label{propertyS:item6} If there exists a knot $K\in \mathcal{S}$ that is smoothly slice, then $K$ gives a counterexample for ribbon-slice conjecture.
\item\label{propertyS:item7} If there exists a knot $K\in \mathcal{S}$ that is topologically slice, then $K$ gives a counterexample for homotopy ribbon-slice conjecture.
\end{enumerate}
\end{proposition}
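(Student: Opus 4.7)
The plan is to dispatch~(\ref{propertyS:item2}), (\ref{propertyS:item3}), (\ref{propertyS:item6}) and~(\ref{propertyS:item7}) as immediate consequences of the obstruction results from Sections~\ref{Ribbon Obstruction}--\ref{Relationship}, and then to handle the existence statements~(\ref{propertyS:item1}), (\ref{propertyS:item4}) and~(\ref{propertyS:item5}) by enhancing the construction of Example~\ref{example1}. For~(\ref{propertyS:item2}), if $K \in \mathcal{S}$ had a $(0)$-solvable derivative $J$ representing a metaboliser $H$, then all triple linking numbers of~$J$ would vanish; Lemma~\ref{lemma:Lagrangian} gives a lagrangian $P$ represented by $H$, and Theorem~\ref{thm:vanishesiftriple} would then yield $\psi(K, P) = 0$, contradicting $K \in \mathcal{S}$. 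The second sentence is immediate since topologically slice links are $(0)$-solvable. For~(\ref{propertyS:item3}), homotopy ribbon $(1)$-solvability implies homology ribbon $(1)$-solvability by Lemma~\ref{lemma:homotopyribbonsol}, and Theorem~\ref{theorem:obstruction} then produces a lagrangian $P$ with $\psi(K, P) = 0$, again contradicting $K \in \mathcal{S}$. Parts~(\ref{propertyS:item6}) and~(\ref{propertyS:item7}) follow from~(\ref{propertyS:item3}) by contraposition: a ribbon (respectively homotopy ribbon) knot is homotopy ribbon, hence homotopy ribbon $(1)$-solvable.

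For~(\ref{propertyS:item1}), the first step is a number-theoretic lemma: the set of triples $(p_1, p_2, p_3)$ with $p_i(p_i - 1) \ne 0$ and $|n_i/m_i| > 1$ for $i = 1, 2, 3, 4$ is infinite. I would take $p_1 = 3$, $p_2 = 5$ and let $p_3$ range over primes at least $17$, for which the gcd conditions on each $n_i$ can be checked by elementary computation. Each such triple yields via Example~\ref{example1} a knot in $\mathcal{S}$. To produce infinitely many concordance classes, I would fix one knot $K_0$ from Example~\ref{example1} and form a sequence of satellite knots $K_n$ obtained from $K_0$ by an infection at a curve in the exterior whose class lies in $\pi_1(X_{K_0})^{(2)}$, using Alexander-polynomial-one auxiliary knots $J_n$ with Cheeger--Gromov $\rho$-invariants taking arbitrarily many distinct values. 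Such infections change neither the Alexander module of $K_0$ nor the triple linking numbers of its derivatives, so $\psi(K_n, P) = \psi(K_0, P)$ for every lagrangian $P$, keeping $K_n \in \mathcal{S}$; on the other hand the $\rho$-invariants distinguish the (even topological) concordance classes via the standard Cochran--Orr--Teichner argument of~\cite{COT04}.

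Part~(\ref{propertyS:item4}) is where I expect the most work, since $\psi$ alone does not obstruct $(1.5)$-solvability. The strategy is to start from the knot of Example~\ref{example1} but carry out the eight Borromean-insertion moves with Borromean rings that are themselves infected by auxiliary knots $T$ with $\int_{S^1}\sigma_T(\omega)\,d\omega$ sufficiently large in absolute value. The COT/CHL first-order signature machinery (see~\cite{COT04, CHL10}) then produces a nonzero Cheeger--Gromov $\rho$-invariant obstructing $(1.5)$-solvability. The main technical obstacle is checking that this enhancement preserves the non-vanishing of $\psi(K, P)$ for every lagrangian $P$; this should follow from the fact that the auxiliary infections affect neither the rational Alexander module nor the triple linking structure of derivatives, so the analysis in Example~\ref{example1} applies verbatim to the enhanced knot.

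Finally, for~(\ref{propertyS:item5}), I would take a $K$ from Example~\ref{example1} and display explicitly a $0$-positive and a $0$-negative $4$-manifold bounding $M_K$. The Borromean rings form a boundary link and therefore bound disjoint surfaces in both positive- and negative-definite $4$-manifolds with the $\pi_1$-image vanishing conditions required for $0$-bipolarity; combining these with the positive and negative bounds coming from pushing the genus-$3$ Seifert surface into $D^4$, by the Cochran--Harvey--Horn satellite principle of~\cite{CHHo13}, provides the required bipolar $4$-manifolds. The vanishing of $\tau, s, d_1, \varepsilon, \nu^+, \Upsilon$ on $K$ is then automatic, as each of these invariants is known to vanish on $0$-bipolar knots.
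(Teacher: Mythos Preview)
Your treatment of (\ref{propertyS:item2}), (\ref{propertyS:item3}), (\ref{propertyS:item6}) and (\ref{propertyS:item7}) is correct and matches the paper. For (\ref{propertyS:item1}) and (\ref{propertyS:item4}) your plans are in the right spirit but differ from the paper in implementation. The paper produces infinitely many triples by taking $p_1=2^n+1$, $p_2=2^{2n}+1$, $p_3=2^{4n}+1$ and computing $n_i/m_i$ explicitly; to separate concordance classes it then varies the triple (hence the Alexander polynomial) and infects the \emph{bands} of $K'$ so that the derivative $\rho$-invariants of $K\#-K'$ exceed the CHL bound. Your proposal to fix one triple and infect along a curve in $\pi_1^{(2)}$ is more delicate: such infections leave first-order signatures unchanged, so the ``standard COT04 argument'' you cite does not apply directly---you would need higher-order CHL-type signatures, and you would also need to check that the infection curve can be chosen disjoint from the Seifert surface so as not to disturb the derivatives. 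For (\ref{propertyS:item4}) the paper simply ties connected sums of trefoils into the bands of the Example~\ref{example1} knot; your idea of infecting the Borromean string links themselves is a variant that should also work, but the band-infection is more transparent.

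The genuine gap is in (\ref{propertyS:item5}). The paper does \emph{not} use Example~\ref{example1} here; it uses Example~\ref{example2}, whose Seifert form has $A=\diag(1,-1,1)$ rather than $A=0$. That example is engineered so that undoing a single positive crossing (on the third band) or a single negative crossing (on the first band) yields a slice knot, and the crossing-change criterion of \cite{CL86}, \cite[Proposition~3.1]{CHHo13} then gives $0$-bipolarity immediately. Your proposed argument for Example~\ref{example1}---that Borromean rings bound disjoint surfaces in definite $4$-manifolds and that this can be combined with a pushed-in Seifert surface via a ``satellite principle''---does not assemble into a proof of $0$-bipolarity: the eight Borromean insertions are string-link infections on the bands, not satellite operations by $0$-bipolar pattern knots, and there is no evident pair of crossing changes on the Example~\ref{example1} knot that produces slice knots of both signs. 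You should switch to Example~\ref{example2} (or otherwise exhibit explicit positive and negative slicing crossing changes) for this part.
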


\begin{proof}
To prove~(\ref{propertyS:item1}), consider knots with the same Seifert form as in Example~\ref{example1}. First, we show that there exist infinitely many triples of integers $\{p_1, p_2, p_3\}$ such that corresponding $|\frac{n_i}{m_i}|>1$ and $p_i\cdot(p_i-1) \neq 0$ for $i\in \{1,2,3\}$. This can be achieved by letting $p_1 = 2^n+1$, $p_2=2^{2n}+1$, $p_3=2^{4n}+1$, since
$$\begin{pmatrix}
\frac{n_1}{m_1}\\
\frac{n_2}{m_2}\\
\frac{n_3}{m_3}\\
\frac{n_4}{m_4}\end{pmatrix} = \begin{pmatrix}
(2^n+1)(2^{2n}+1)(2^{4n}+1)-2^{7n}\\
2^{3n}+2^{2n}+2^n-1\\
2^{4n}-2^{2n}+2^n+1\\
-2^{5n}+2^{4n}+2^{2n}+1\end{pmatrix}.$$
Choose two triples $\{p_1, p_2, p_3\}$ and $\{p_1', p_2', p_3'\}$ with above property where $p_1, p_2, p_3,p_1', p_2', p_3'$ are all distinct. Let $K$ and $K'$ be knots from Example~\ref{example1} where $\{p_1, p_2, p_3\}$ corresponds to $K$ and $\{p_1', p_2', p_3'\}$ corresponds to $K'$. If they are concordant then by \cite[Corollary $5.9$]{CHL10} $K \# -K'$ should have a derivative with bounded von Neumann $\rho$-invariant of its zero surgery.
For every metaboliser, perform infection on $K'$ to make the von Neumann $\rho$-invariant of zero surgery on the derivatives of $K \# -K'$ larger than upper bound given by \cite[Corollary~10.2]{CHL10}.  This guarantees that $K$ and $K'$ are not concordant, and we can repeat this process to get infinitely many different concordance classes of knots in $\mathcal{S}$.
%

Items~(\ref{propertyS:item2}) and (\ref{propertyS:item3}) follow from the proof of Theorem~\ref{thm:any-seifert-surface-non-trivial-mu-123}.

For~(\ref{propertyS:item4}), it is known that if $K \in \mathcal{F}_{1.5}$, then there is an upper bound for the von Neumann $\rho$-invariant of zero surgery on a derivative of $K$ that represents particular metaboliser of~$K$ ~\cite[Corollary~$10.2$]{CHL10}. The effect of infection on the von Neumann $\rho$-invariant  is well understood ~\cite[Proposition~$3.2$]{COT04},~\cite[Lemma~$2.3$]{CHL09}. For instance, if we take a knot from Example~\ref{example1} and infect each of the bands enough (for example, by tying in a connected sum of many trefoils) so that von Neumann $\rho$-invariant of zero surgery on the derivatives of $K$ for each metaboliser becomes larger than the upper bound given by \cite[Corollary~$10.2$]{CHL10}, then we can guarantee that $K$ is not $(1.5)$-solvable. Hence $(4)$ holds.

For~(\ref{propertyS:item5}), we will use Example~\ref{example2}. Note that the knot from Example~\ref{example2} can be turned into a slice knot by changing a positive crossing to a negative crossing (undo the positive crossing on the third band). Also this knot can be turned into a slice knot by changing a negative crossing to a positive crossing (undo the negative crossing on the first band). Whence $K \in \mathcal{B}_0$ ~\cite[Lemma~3.4]{CL86}, \cite[Proposition~3.1]{CHHo13} and the result follows from \cite{CHHo13,NW14,HomWu16,OSS17}.

Items~(\ref{propertyS:item6}) and ~(\ref{propertyS:item7}) follow immediately from Theorem~\ref{ribbon-obstruction-theorem}.\end{proof}

\bibliographystyle{alpha}
\bibliography{knotbib}

\end{document}